\renewcommand{\det}{{\rm det}}
\renewcommand{\dim}{{\rm dim}}
\newcommand{\rk}{{\rm rk}}
\renewcommand{\det}{{\rm det}}
\theoremstyle{plain}
\newtheorem{thm}{Theorem}[section]
\newtheorem{cor}[thm]{Corollary}
\newtheorem{prop}[thm]{Proposition}
\newtheorem{lem}[thm]{Lemma}
\theoremstyle{definition}
\newtheorem{defn}[thm]{Definition}
\newtheorem{rmk}[thm]{Remark}
\def\ZZ{{\mathbb Z}}
\def\GG{{\textbf G}}
\def\PP{{\textbf P}}
\def\OO{\mathcal{O}}
\def\F{\mathcal{F}}
\def\P{\mathcal{P}}
\def\E{\mathcal{E}}
\def\I{\mathcal{I}}
\def\mm{\overline{\mathcal{M}}}
\newcommand{\bb}[1]{\mathbb{#1}}
\newcommand{\mc}[1]{\mathcal{#1}}
\newcommand{\defi}[1]{{\em #1}}
\newcommand{\op}[1]{\operatorname{#1}}
\newcommand{\bw}{\bigwedge}
\newcommand{\im}{\operatorname{Im}}
\renewcommand{\ker}{\operatorname{Ker}}
\newcommand{\oo}{\otimes}
\def\lra{\longrightarrow}
\newcommand{\Gr}{\operatorname{Gr}}
\newcommand{\Sym}{\operatorname{Sym}}
\theoremstyle{plain}
\newtheorem{lemma}[thm]{Lemma}
\newtheorem{proposition}[thm]{Proposition}
\theoremstyle{definition}
\newtheorem{remark}[thm]{Remark}
\newtheorem*{ack}{Acknowledgment}
\newtheorem*{thm-van*}{Vanishing Theorem}
\begin{document}
\title[Koszul modules with vanishing resonance in algebraic geometry]{Koszul modules with vanishing resonance in algebraic geometry}


\author[M. Aprodu]{Marian Aprodu}
\address{Marian Aprodu: Faculty of Mathematics and Computer Science, \hfill \newline\texttt{}
\indent	University of Bucharest,  Romania, and
\hfill \newline\texttt{}
\indent Simion Stoilow Institute of Mathematics
\hfill \newline\texttt{} \indent P.O. Box 1-764,
RO-014700 Bucharest, Romania}
\email{{\tt marian.aprodu@imar.ro}}

\author[G. Farkas]{Gavril Farkas}
\address{Gavril Farkas: Institut f\"ur Mathematik,   Humboldt-Universit\"at zu Berlin \hfill \newline\texttt{}
\indent Unter den Linden 6,
10099 Berlin, Germany}
\email{{\tt farkas@math.hu-berlin.de}}

\author[C. Raicu]{Claudiu Raicu}
\address{Claudiu Raicu: Department of Mathematics,
University of Notre Dame \hfill \newline\texttt{}
\indent 255 Hurley Notre Dame, IN 46556, USA, and \hfill\newline\texttt{}
\indent Simion Stoilow Institute of Mathematics, \hfill\newline\texttt{}
\indent  P.O. Box 1-764, RO-014700 Bucharest, Romania}
\email{{\tt craicu@nd.edu}}

\author[J. Weyman]{Jerzy Weyman}
\address{Jerzy Weyman: Institute of Mathematics,
Uniwersytet Jagiello\'nski \hfill \newline\texttt{}
\indent 30-348,
Krak\'ow, Poland}
\email{{\tt  jerzy.weyman@uj.edu.pl}}

\dedicatory{To the memory of \c Stefan Papadima}

\begin{abstract}
We discuss various applications of a uniform  vanishing result for the graded components of the finite length Koszul module associated to a subspace $K\subseteq \bigwedge^2 V$, where $V$ is a vector space.  Previously Koszul modules of finite length have been used to give a proof of Green's Conjecture on syzygies of generic canonical curves. We  now give applications to effective stabilization of cohomology of thickenings of algebraic varieties,  divisors on moduli spaces of curves, enumerative geometry of curves on $K3$ surfaces and to  skew-symmetric degeneracy loci. We also show that the instability of sufficiently positive rank $2$ vector bundles on curves is governed by resonance and give a splitting criterion.
\end{abstract}

\maketitle

\section{Introduction}

Given a suitably nice space (for instance a compact K\"ahler manifold) $X$, one can view its cohomology ring $H^{\bullet}(X, \mathbb C)$  as a module over its exterior algebra $E:=\bigwedge H^1(X,\mathbb C)$. Multiplication with a class $a\in H^1(X, \mathbb C)$ defines a complex on $H^{\bullet}(X,\mathbb C)$ and the jump loci for the cohomology of these complexes lead to the definition of the \emph{resonance variety} $\mathcal{R}(X)$ of $X$, which turned out to be instrumental in several investigations involving generic vanishing on varieties, see for instance \cite{Be}, \cite{DPS}, \cite{GL91}, \cite{LP}. This definition has then been extended by Suciu and Papadima \cite{PS-imrn} first to the case of finitely generated groups and then in \cite{PS15} to an entirely algebraic context. For important applications of these techniques to Torelli groups we refer to \cite{DP} and references therein. Closely related to the concept of resonance is the definition of a \emph{Koszul module}, initially introduced in \cite{PS-imrn} and \cite{PS15} to explain via the BGG correspondence homological properties of Alexander invariants (and more generally of quadratic algebras), then further studied in a purely algebraic context in \cite{AFPRW1} and \cite{AFPRW2}. We recall now this set-up.

\vskip 4pt

Suppose  $V$ is an $n$-dimensional complex vector space and let us fix a linear subspace $K\subseteq \bigwedge^2 V$. We denote by $K^\perp=(\bigwedge^2V/K)^\vee\subseteq \bigwedge^2V^\vee$ the orthogonal of $K$, viewed as
the space of skew-symmetric bilinear forms on $V$ vanishing on $K$.
Let $S:=\mbox{Sym}(V)$ be the polynomial algebra over $V$ and consider the Koszul complex
$$\cdots \longrightarrow \bigwedge^3 V\otimes S\stackrel{\delta_3}\longrightarrow \bigwedge^2 V\otimes S\stackrel{\delta_2}\longrightarrow V\otimes S\stackrel{\delta_1}\longrightarrow S\longrightarrow \mathbb C\longrightarrow 0.$$ According to \cite{PS15}, \cite{AFPRW1}, \cite{AFPRW2}, the {\em Koszul module} associated to $(V,K)$ is the graded $S$-module
\[W(V,K):=\mathrm{Coker}\Bigl\{\bigwedge^3V\otimes S\longrightarrow \Bigl(\bigwedge^2V/K\Bigr)\otimes S\Bigr\},\]
where the map in question is the projection $\bw^2 V \oo S \to (\bigwedge^2V/K)\otimes S$ composed with the Koszul differential $\delta_3$.
The grading is inherited from $\Sym V$ under the convention that $\bigwedge^2V/K$ is placed in degree ~$0$.
It is straightforward to see that the graded piece $W_q(V,K)$ of the Koszul module can be identified with the cohomology of the complex
$$K\otimes  \mbox{Sym}^q V \stackrel{\delta_{2,q}}\longrightarrow V\otimes \Sym^{q+1} V \stackrel{\delta_{1,q+1}}\longrightarrow \Sym^{q+2} V.
$$

\vskip 4pt

It is shown in \cite[Lemma 2.4]{PS15} that the support of the Koszul module $W(V,K)$ in the affine space $V^\vee$, if non-empty, coincides with the \emph{resonance variety}
$$\mc{R}(V,K):=\Bigl\{a\in V^\vee : \mbox{there exists }b\in V^\vee \mbox{ such that } a\wedge b\in K^\perp\setminus \{0\} \Bigr\}\cup \{0\}.$$
In particular, $W(V,K)$ has finite length if and only $\mc{R}(V,K)=\{0\}$. In \cite{AFPRW2} (see also \cite[Theorem 1.3]{AFPRW1}), we found an optimal characterization of those subpaces $K\subseteq \bigwedge ^2 V$ having trivial resonance and established the following equivalence:
\begin{equation}\label{equivalence}
\mathcal{R}(V, K)=\{0\}\Longleftrightarrow W_q(V,K)=0 \ \mbox{ for } q\geq \mbox{dim}(V)-3.
\end{equation}

We refer to Theorem \ref{thm:mainresult} for a  precise formulation of this result.
The paper \cite{AFPRW2} presents applications of the equivalence (\ref{equivalence}) to geometric group theory in the case $G$ is a finitely generated group, $V=H_1(G, \mathbb C)$  and $K^{\perp}=\mbox{Ker}\bigl\{\cup_G\colon \bigwedge^2 H^1(G, \mathbb C)\rightarrow H^2(G, \mathbb C)\bigr\}$. On the other hand, we explained in \cite{AFPRW1}  how by specializing to the tangent developable of a rational normal curve in
$\mathbb \PP^g$, one can prove \emph{Green's Conjecture}  \cite{G84} on syzygies of generic canonical curves of genus $g$ by applying the equivalence (\ref{equivalence}) to the case of the \emph{Weyman module}, which is a particular Koszul module corresponding to the choice $V=\Sym^{n-1}(U)$ and $K=\Sym^{2n-4}(U)$, with $U$ being a $2$-dimensional vector space. This has led to an alternate approach to Green's Conjecture (including an essentially optimal result in positive characteristic) different from the one of Voisin's \cite{V02}, \cite{V05}.

\vskip 4pt

This paper is devoted to the study of other important classes of Koszul modules with vanishing resonance that appear naturally in algebraic geometry. First, recalling that $V$ is an $n$-dimensional complex vector space, we note that if $\mathcal{R}(V, K)=\{0\}$ then $\mbox{dim}(K)\geq 2n-3$. We provide a refinement involving \emph{multiplicities} of the equivalence (\ref{equivalence}) in the case of $(2n-3)$-dimensional subspaces $K\subseteq \bigwedge^2 V$ as an equality of two particular divisor on the Grassmannian $\GG:=\Gr_{2n-3}\bigl(\bigwedge^2 V\bigr)$ parametrizing such subspaces, see Theorem \ref{thm=GreenToKoszul}. Denoting by $\mc{D}_{\mathfrak{Kosz}}$ the divisor consisting of subspaces $[K]\in \GG$ such that $W_{n-3}(V,K)\neq 0$ (with its natural scheme structure) and by $\mc{D}_{\mathfrak{Res}}$ the divisor consisting of those $[K]\in \GG$ with $\mathcal{R}(V,K)\neq \{0\}$, we have an equality of divisors
\begin{equation}\label{eq:divequal}
\mathcal{D}_{\mathfrak{Kosz}}=(n-2)\cdot \mathcal{D}_{\mathfrak{Res}}
\end{equation}
on the Grassmannian $\GG$. An immediate application of the equality (\ref{eq:divequal}) is then the calculation of what we call the \emph{resonance divisor} of a morphism of vector bundles
$$\phi \colon \bigwedge^2 \E \rightarrow \F,$$
where $\E$ and $\F$ are vector bundles on a stack $X$ with  $\mbox{rk}(\E)=e$ and $\mbox{rk}(\F)=2e-3$. We denote by $\mathfrak{Res}(\phi)$ the locus of points $x\in X$ such that the map $\phi(x)\colon \bigwedge^2 \E(x)\rightarrow \F(x)$ contains a pure tensor
$0\neq s_1\wedge s_2$ in its kernel. A parameter count quickly shows that when $\phi$ is sufficiently general, $\mathfrak{Res}(\phi)$ is a divisor on $X$.

\begin{thm}\label{thm:alt}
Given a morphism $\phi\colon \bigwedge^2 \E\rightarrow \F$ of vector bundles over $X$ with $\mathrm{rk}(\E)=e$ and $\mathrm{rk}(\F)=2e-3$, assuming $\mathfrak{Res}(\phi)$ is a divisor on $X$, its class is given by the formula
$$[\mathfrak{Res}(\phi)]=\frac{(2e-4)!}{(e-2)!\cdot (e-1)!}\Bigl(c_1(\F)-\frac{4e-6}{e}c_1(\E)\Bigr)\in CH^1(X).$$
\end{thm}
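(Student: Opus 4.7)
The plan is to realize $\mathfrak{Res}(\phi)$ as a degeneracy divisor pulled back from the Grassmannian setting, apply the divisor identity \eqref{eq:divequal}, and compute $\mc{D}_{\mathfrak{Kosz}}$ as the vanishing locus of the determinant of a morphism of vector bundles of equal rank. First I globalize the linear algebra by setting $\mathcal{V}:=\E^{\vee}$ and $\K:=\operatorname{Im}\bigl(\phi^{\vee}\colon \F^{\vee}\to \bigwedge^2\E^{\vee}\bigr)$. Over the open locus $U\subseteq X$ where $\phi$ is surjective, $\K\cong \F^{\vee}$ is a rank-$(2e-3)$ subbundle of $\bigwedge^2\mathcal{V}$, and the fiberwise identification $\K(x)^{\perp}=\ker \phi(x)\subseteq \bigwedge^2\E(x)$ shows that a point $x\in U$ lies in $\mathfrak{Res}(\phi)$ precisely when $\mc{R}(\mathcal{V}(x),\K(x))\neq\{0\}$. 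Pulling back \eqref{eq:divequal} then yields the equality of divisor classes
\[(e-2)\cdot [\mathfrak{Res}(\phi)]=[\mc{D}_{\mathfrak{Kosz}}]\in \mathrm{CH}^1(X),\]
noting that for $e\geq 4$ the complement $X\setminus U$ has codimension $\geq 2$ and so is negligible for divisorial class computations (the edge case $e=3$, where every element of $\bigwedge^2\E(x)$ is a pure tensor, reduces to $\mathfrak{Res}(\phi)=\{\det\phi=0\}$ and the asserted formula follows directly).

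Next I realize $\mc{D}_{\mathfrak{Kosz}}$ as an explicit determinantal divisor on $X$. By the description of $W_{e-3}(\mathcal{V},\K)$ as the middle cohomology of $\K\otimes \Sym^{e-3}\mathcal{V}\to \mathcal{V}\otimes \Sym^{e-2}\mathcal{V}\to \Sym^{e-1}\mathcal{V}$, we have that $W_{e-3}(\mathcal{V},\K)$ is the cokernel of the induced Koszul map
\[\Phi\colon \K\otimes \Sym^{e-3}\mathcal{V}\lra \ker\Bigl(\mu\colon \mathcal{V}\otimes \Sym^{e-2}\mathcal{V}\lra \Sym^{e-1}\mathcal{V}\Bigr).\]
A direct computation using $\binom{2e-3}{e-2}=\binom{2e-3}{e-1}$ and $\binom{2e-2}{e-1}=2\binom{2e-3}{e-1}$ shows that both source and target of $\Phi$ have rank $(2e-3)\binom{2e-4}{e-3}=\frac{(2e-3)!}{(e-1)!(e-3)!}$. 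Hence $\mc{D}_{\mathfrak{Kosz}}$ is the zero locus of $\det\Phi$, and
\[[\mc{D}_{\mathfrak{Kosz}}]=c_1(\ker\mu)-c_1\bigl(\K\otimes\Sym^{e-3}\mathcal{V}\bigr).\]

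What remains is a Chern-class bookkeeping. Using the short exact sequence $0\to \ker\mu \to \mathcal{V}\otimes \Sym^{e-2}\mathcal{V}\to \Sym^{e-1}\mathcal{V}\to 0$ to express $c_1(\ker\mu)$, the tensor-product identity $c_1(A\otimes B)=\operatorname{rk}(B)\,c_1(A)+\operatorname{rk}(A)\,c_1(B)$, and the symmetric-power identity $c_1(\Sym^d F)=\frac{d}{\operatorname{rk}F}\binom{d+\operatorname{rk}F-1}{\operatorname{rk}F-1}c_1(F)$, together with $c_1(\mathcal{V})=-c_1(\E)$ and $c_1(\K)=-c_1(\F)$, I expand both Chern classes and collect the coefficients of $c_1(\E)$ and $c_1(\F)$. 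The coefficient of $c_1(\F)$ comes out immediately as $\binom{2e-4}{e-3}$; the main obstacle is simplifying the coefficient of $c_1(\E)$, where three binomial contributions must combine through the identities above to produce an overall factor of $e-2$. Dividing by $e-2$ and rewriting $\binom{2e-4}{e-3}/(e-2)=\frac{(2e-4)!}{(e-2)!(e-1)!}$ then yields the stated formula $\frac{(2e-4)!}{(e-2)!(e-1)!}\bigl(c_1(\F)-\frac{4e-6}{e}c_1(\E)\bigr)$.
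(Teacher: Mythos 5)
Your proposal is essentially the paper's own argument: both reduce to the divisor identity $\mathcal{D}_{\mathfrak{Kosz}}=(e-2)\cdot\mathcal{D}_{\mathfrak{Res}}$ of Theorem \ref{thm=GreenToKoszul} and then compute the Koszul divisor as the determinant of a map of equal-rank bundles built from the Koszul differential; your $\Phi\colon \K\otimes\Sym^{e-3}\mathcal{V}\to\ker(\mu)$ is exactly the dual, fiberwise form of the paper's $\vartheta\colon \E\otimes\Sym^{e-2}(\E)/\Sym^{e-1}(\E)\to\F\otimes\Sym^{e-3}(\E)$ (equivalently, the pullback of the universal map (\ref{eq:universal-delta2n-3}) under the classifying map), and your Chern-class bookkeeping yields the same coefficients as the paper's. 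The only point to watch is your parenthetical claim that $X\setminus U$ has codimension $\geq 2$: this is a genericity assumption on $\phi$ rather than a consequence of $\mathfrak{Res}(\phi)$ being a divisor (any $x$ at which $\phi(x)$ fails to be surjective automatically lies in $\mathfrak{Res}(\phi)$, so the non-surjective locus could a priori be divisorial, in which case working in $CH^1(U)$ would lose those components), whereas the paper sidesteps this by using the globally defined $\vartheta$ together with Definition \ref{def:resdiv}; under the intended genericity your argument is complete and correct.
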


Theorem \ref{thm:alt} has numerous applications in moduli theory, one of them on the Kodaira dimension of the moduli space of Prym varieties having been presented in \cite{FJP}. While referring to Theorem \ref{thm:classVoisin} for  further applications to $K3$ surfaces, we discuss one consequence of Theorem \ref{thm:alt} to the geometry of the moduli space $\mm_{g,n}$ of $n$-pointed stable curves of genus $g$. For a smooth curve $C$, a \emph{canonical pencil} is the degree $2g-2$ cover $C\rightarrow \PP^1$ induced by two canonical forms without common zeroes. Since $C$ has a $(2g-4)$-dimensional family of canonical pencils each of them having finitely many ramification points, imposing the condition that $2g-3$ marked points are ramification points of such a pencil yields a divisorial condition in moduli.

\begin{thm}\label{thm:canpencil}
The class of the divisor $\mathfrak{Cp}_g$ of pointed curves $[C, x_1, \ldots, x_{2g-3}]\in \mm_{g,2g-3}$ such that $x_1, \ldots, x_{2g-3}$ are ramification points of a canonical pencil on $C$ is equal to
$$[\mathfrak{Cp}_g]=\frac{(2g-4)!}{(g-2)!\cdot (g-1)!}\Bigl(-\frac{2(2g-3)}{g}\lambda+3\sum_{i=1}^{2g-3}\psi_i\Bigr)\in CH^1(\mm_{g,2g-3}).$$
\end{thm}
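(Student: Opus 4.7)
The plan is to realize $\mathfrak{Cp}_g$ as the resonance divisor of a natural morphism of vector bundles on $\mm_{g,2g-3}$ and then invoke Theorem~\ref{thm:alt}. The local input is a jet-theoretic reformulation of ramification: on a smooth curve $C$, the first jet bundle sits in
\[
0 \longrightarrow \omega_C^{\otimes 2} \longrightarrow J^1(\omega_C) \longrightarrow \omega_C \longrightarrow 0,
\]
so that $\det J^1(\omega_C)=\omega_C^{\otimes 3}$. A point $x\in C$ is a ramification point of a pencil $W=\langle s_0,s_1\rangle\subset H^0(C,\omega_C)$ precisely when the evaluation $W\to J^1(\omega_C)|_x$ is degenerate, which after taking determinants is equivalent to $s_0\wedge s_1$ belonging to the kernel of
\[
\bw^2 H^0(C,\omega_C)\longrightarrow \bw^2 J^1(\omega_C)|_x \;=\; \omega_C^{\otimes 3}|_x.
\]
Any base point of $W$ at $x$ automatically produces such a vanishing, so this condition neatly captures the degenerate cases as well.

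To globalize, let $\pi\colon\cC\to\mm_{g,2g-3}$ be the universal curve with marked sections $\sigma_1,\dots,\sigma_{2g-3}$, let $\omega_\pi$ be the relative dualizing sheaf, and set $\E:=\pi_*\omega_\pi$ (the Hodge bundle, of rank $g$) together with $\L_i:=\sigma_i^*\omega_\pi$. The relative $1$-jet evaluation $\pi^*\E\to J^1_\pi(\omega_\pi)$, pulled back along each $\sigma_i$ and composed with the determinant, yields a morphism $\bw^2 \E\to \L_i^{\otimes 3}$; assembling these produces
\[
\phi\colon \bw^2\E\longrightarrow \F \;:=\; \bigoplus_{i=1}^{2g-3}\L_i^{\otimes 3},
\]
with $\op{rk}(\E)=g$ and $\op{rk}(\F)=2g-3$. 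By the local analysis above, $\mathfrak{Res}(\phi)$ is the closure of the locus of $[C,x_1,\dots,x_{2g-3}]$ for which some pencil $W\subset H^0(C,\omega_C)$ is ramified at each of the marked points, so $\mathfrak{Res}(\phi)=\mathfrak{Cp}_g$.

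Applying Theorem~\ref{thm:alt} with $e=g$, and using $c_1(\E)=\lambda$ and $c_1(\F)=3\sum_{i=1}^{2g-3}\psi_i$, one obtains
\[
[\mathfrak{Cp}_g]=\frac{(2g-4)!}{(g-2)!\cdot (g-1)!}\Bigl(3\sum_{i=1}^{2g-3}\psi_i-\frac{4g-6}{g}\lambda\Bigr),
\]
which agrees with the stated formula because $4g-6=2(2g-3)$.

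The main obstacle is justifying that $\mathfrak{Res}(\phi)$ is genuinely a divisor and that its scheme structure matches the geometric definition of $\mathfrak{Cp}_g$. Over the interior $\mc{M}_{g,2g-3}$ a dimension count is transparent: for a smooth non-hyperelliptic $C$ the Grassmannian of canonical pencils has dimension $2(g-2)=2g-4$, each ramification condition is of codimension one, and non-emptiness may be checked on any explicit example. For the extension to the compactification one uses that $\pi_*\omega_\pi$ remains locally free of rank $g$ on $\mm_g$, that the sections $\sigma_i$ by definition avoid the nodes of fibers so that $J^1_\pi(\omega_\pi)$ and its determinant $\omega_\pi^{\otimes 3}$ are well-defined along the $\sigma_i$, and that no boundary divisor classes are needed in Theorem~\ref{thm:alt}, whose output depends only on $c_1(\E)$ and $c_1(\F)$.
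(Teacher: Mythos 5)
Your proposal is correct and is essentially the paper's own argument: you realize $\mathfrak{Cp}_g$ as the resonance divisor $\mathfrak{Res}(\phi)$ of a morphism $\phi\colon\bigwedge^2\E\to\F$ with $\E$ the Hodge bundle and $\F$ of rank $2g-3$ encoding the Wronskian (i.e.\ the Gaussian map $\psi_{\omega_C}$) evaluated at the marked points, and then apply Theorem \ref{thm:alt} with $c_1(\E)=\lambda$ and $c_1(\F)=3\sum_i\psi_i$. The only (harmless) difference is that the paper takes $\F=\pi_*\bigl(\omega^3_{\pi|\Delta_{0:1,n+1}+\cdots+\Delta_{0:n,n+1}}\bigr)$ and computes $c_1(\F)$ by Grothendieck--Riemann--Roch in Proposition \ref{prop:ramF}, whereas your identification $\F\cong\bigoplus_{i}\bigl(\sigma_i^*\omega_\pi\bigr)^{\otimes 3}$ makes that Chern class immediate; the two bundles and the fibrewise maps coincide, so the arguments are the same.
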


Here $\lambda$ is the Hodge class, whereas $\psi_i$ denotes the cotangent class on $\mm_{g,2g-3}$ corresponding to the $i$-th marked point. Theorem
\ref{thm:canpencil} follows directly from Theorem \ref{thm:alt} by letting  $\E$ to be the Hodge bundle on $\mm_{g,2g-3}$, whereas $\F$ is the vector bundle having as fibre over a point $[C, x_1, \ldots, x_{2g-3}]\in \mm_{2g-3}$ the vector space
$H^0\bigl(C, \omega^{3}_{C|x_1+\cdots +x_{2g-3}}\bigr)$.

\vskip 4pt

\noindent{\bf Koszul modules associated to vector bundles.}
One can naturally associate a Koszul module to any vector bundle as we shall describe next. Suppose $E$ is a vector bundle on an algebraic
variety $X$ and consider the determinant map
\[
d\colon \bigwedge^2H^0(X,E)\to H^0(X,\bw^2 E).
\]
This gives rise to the following Koszul module
\begin{equation}\label{det-lekepezes}
W(X,E):=W(V,K),\mbox{ where } V:=H^0(X,E)^\vee \mbox{ and } K:=\ker(d)^\perp \subseteq \bigwedge^2V.
\end{equation}

If we let $\mc{R}(X,E) := \mc{R}(V,K)$ for $V$ and $K$ as above, then the non-triviality of the resonance amounts to the vector bundle $E$ carrying a \emph{subpencil}, that is, a line subbundle $L$ with $h^0(X,L)\geq 2$.
We show in \S \ref{subsec:detmaps}, that the equivalence (\ref{equivalence}) can be reformulated in this context as follows:
\begin{thm}\label{rk2}
Let $X$ be a projective variety with $H^1(X, \mathcal{O}_X)=0$ and let $E$ be a globally generated vector bundle on $X$. Then one has an isomorphism
\[W_q(X,E) \cong H^1\bigl(X,\Sym^{q+2}M_E\bigr)^\vee,\]
where $M_E$ denotes the kernel of the evaluation morphism $H^0(X,E)\otimes \mathcal O_X\rightarrow E$. If moreover $E$ has no subpencils, then it follows that $H^1\bigl(X, \mathrm{Sym}^{q+2} M_E\bigr)=0$, for $q\geq h^0(X,E)-3$.
\end{thm}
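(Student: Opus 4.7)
The plan is to resolve $\Sym^{q+2}M_E$ by a Koszul-type complex on $X$, then extract $H^1$ from the complex of global sections, and finally identify that complex with the $\mathbb{C}$-linear dual of the complex defining $W_q(V,K)$; the vanishing statement is then immediate from the main equivalence (\ref{equivalence}). Set $W:=H^0(X,E)$, so that $V=W^\vee$. Taking symmetric powers of the evaluation sequence $0\to M_E\to W\otimes \mathcal{O}_X\to E\to 0$ in characteristic zero produces the exact ``dual Koszul/Eagon--Northcott'' complex on $X$:
\begin{equation*}
0\to \Sym^{q+2}M_E\to \Sym^{q+2}W\otimes \mathcal{O}_X\xrightarrow{\widetilde f}E\otimes \Sym^{q+1}W\xrightarrow{\widetilde g}\bw^2E\otimes \Sym^q W\to\cdots.
\end{equation*}

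I would split this resolution into short exact sequences and, writing $\cK_1:=\ker(\widetilde g)\subseteq E\otimes \Sym^{q+1}W$, use $H^1(X,\mathcal{O}_X)=0$ to kill $H^1(\Sym^{q+2}W\otimes \mathcal{O}_X)$. The inclusion $\cK_1\hookrightarrow E\otimes \Sym^{q+1}W$ identifies $H^0(\cK_1)$ with the kernel of the map $g\colon W\otimes \Sym^{q+1}W\to H^0(X,\bw^2E)\otimes \Sym^qW$ obtained by composing the Koszul differential $W\otimes \Sym^{q+1}W\to \bw^2 W\otimes \Sym^q W$ with the determinant map $d\colon \bw^2W\to H^0(X,\bw^2E)$. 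The long exact sequence of $0\to \Sym^{q+2}M_E\to \Sym^{q+2}W\otimes \mathcal{O}_X\to \cK_1\to 0$ then gives $H^1(X,\Sym^{q+2}M_E)=\ker(g)/\im(f)$, where $f\colon \Sym^{q+2}W\to W\otimes \Sym^{q+1}W$ is the polarization map dual to multiplication. Since $K^\vee=\bw^2W/\ker(d)=\mathrm{image}(d)$ and $\im(g)\subseteq \im(d)\otimes \Sym^qW$, the arrows $f$ and $g$ are precisely the $\mathbb{C}$-duals of the two maps $K\otimes \Sym^q V\xrightarrow{\alpha} V\otimes \Sym^{q+1}V\xrightarrow{\beta}\Sym^{q+2}V$ defining $W_q(V,K)$, so linear duality on middle cohomology yields $H^1(X,\Sym^{q+2}M_E)\cong W_q(V,K)^\vee$, the first claim.

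For the vanishing I would reinterpret resonance as the presence of a subpencil. A nonzero point of $\mc{R}(V,K)$ provides linearly independent $a,b\in W=V^\vee$ with $a\wedge b\in \ker(d)$, equivalently $a\wedge b=0$ in $H^0(X,\bw^2E)$; the saturation of the image of $\langle a,b\rangle \otimes \mathcal{O}_X\to E$ is then a line subbundle $L\subseteq E$ with $h^0(L)\ge 2$, that is, a subpencil of $E$, and the converse is immediate. Hence ``no subpencils'' is equivalent to $\mc{R}(V,K)=\{0\}$, and the main equivalence (Theorem~\ref{thm:mainresult}), combined with the isomorphism above, yields $H^1(X,\Sym^{q+2}M_E)=0$ for every $q\ge \dim V-3=h^0(X,E)-3$. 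The subtle point is the middle step: the higher cohomology of the sheaves $\bw^iE\otimes \Sym^{q+2-i}W$ with $i\ge 1$ is generally uncontrolled, so one cannot compute $H^1(\Sym^{q+2}M_E)$ by collapsing the whole resolution; the trick is that only the leftmost two terms of the global-section complex actually enter, and exactly one cohomological vanishing, namely $H^1(X,\mathcal{O}_X)=0$, is needed to make the long-exact-sequence argument work cleanly.
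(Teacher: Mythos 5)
Your argument is correct and follows essentially the same route as the paper: the paper also resolves $\Sym^{q+2}M_E$ by the symmetric power of the evaluation sequence and identifies the resulting global-section complex with the $\mathbb{C}$-dual of the complex $K\otimes\Sym^qV\to V\otimes\Sym^{q+1}V\to\Sym^{q+2}V$ (replacing $\mathrm{Im}(d)$ by $H^0(X,\bigwedge^2E)$ without changing middle cohomology), and it deduces the vanishing from the subpencil interpretation of resonance together with Theorem \ref{thm:mainresult}. The only difference is bookkeeping: where you truncate the resolution into a short exact sequence and use left-exactness of $H^0$ plus $H^1(X,\mathcal{O}_X)=0$, the paper runs the hypercohomology spectral sequence, which also yields the slightly finer statement (Theorem \ref{thm:det}) valid without the assumption $H^1(X,\mathcal{O}_X)=0$.
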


Theorem \ref{rk2} is particularly interesting for a polarized $K3$ surface $(X, L)$, where $L$ is an ample line bundle on $X$ with $L^2=2g-2$. Recall that the \emph{Mukai vector} of a sheaf $E$ on $X$ is defined as $v(E):=\bigl(\mbox{rk}(E), \mbox{det}(E), \chi(E)-\mbox{rk}(E)\bigr)\in H^{\bullet}(X)$ and that $M_L(v)$ denotes the moduli space of $L$-semistable sheaves on $X$ having Mukai vector $v$, see \S \ref{section:K3} for further details. A \emph{Lazarsfeld-Mukai} bundle is a globally generated vector bundle $E$ on $X$ with $H^1(X,E)=H^2(X,E)=0$.  If $E$ is a Lazarsfeld-Mukai bundle with Mukai vector $v(E)=(r,L,s)$, then $M_E^{\vee}$ is also a Lazarsfeld-Mukai bundle with vector $v(M_E^{\vee})=(s,L,r)$. Lazarsfeld-Mukai bundles have been instrumental in Voisin's proof of the Generic Green Conjecture \cite{V02}, \cite{V05}, Lazarsfeld's proof of the Petri Theorem \cite{L86}, or in the recent proof of the Mercat Conjecture \cite{BF}. In the case of $K3$ surfaces, Theorem \ref{rk2} implies the following result:

\begin{thm}\label{lm_eredmeny}
Let $X$ be a polarized $K3$ surface with $\mathrm{Pic}(X)=\mathbb Z\cdot L$ and let $E$ be a Lazarsfeld-Mukai bundle on $X$ with
$v(E)=(r,L,s)$. Then for all $b\geq r+s-1$ one has $$H^1\bigl(X,\mathrm{Sym}^b E\bigr)=0.$$
\end{thm}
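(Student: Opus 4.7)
The plan is to apply Theorem \ref{rk2} to the dual Lazarsfeld--Mukai bundle $F := M_E^\vee$, whose Mukai vector is $(s,L,r)$, and then transfer the resulting vanishing to $\Sym^b E$ via Serre duality on $X$.

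First I would establish the identification $M_F \cong E^\vee$. From the evaluation sequence $0\to M_E\to H^0(X,E)\oo\OO_X\to E\to 0$ and the Lazarsfeld--Mukai vanishings $H^1(X,E)=H^2(X,E)=0$, one reads off $H^0(M_E)=H^1(M_E)=0$ and $H^2(M_E)\cong H^0(X,E)$; applying Serre duality on the $K3$ surface $X$ then yields $h^0(F)=r+s$ and $h^1(F)=h^2(F)=0$. Dualizing the evaluation sequence for $E$ gives
\[
0\lra E^\vee \lra H^0(X,E)^\vee\oo\OO_X \lra F \lra 0,
\]
and the associated long exact sequence, together with $H^1(E^\vee)\cong H^1(E)^\vee=0$ and the dimension equality $h^0(X,F)=r+s$, forces $H^0(X,E^\vee)=0$ and the natural map $H^0(X,E)^\vee \to H^0(X,F)$ to be an isomorphism. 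Under this identification the displayed sequence becomes the defining sequence of $M_F$, so $M_F\cong E^\vee$. Applying Theorem \ref{rk2} to the globally generated bundle $F$ (using $H^1(X,\OO_X)=0$) then gives, assuming $F$ has no subpencils,
\[
H^1\bigl(X,\Sym^b E^\vee\bigr)=H^1\bigl(X,\Sym^b M_F\bigr)=0 \quad \text{for all } b\geq h^0(X,F)-1=r+s-1,
\]
and Serre duality on $X$ converts this into $H^1(X,\Sym^b E)\cong H^1(X,\Sym^b E^\vee)^\vee=0$ for the same range.

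The main obstacle is verifying that $F=M_E^\vee$ carries no subpencils. Since $\mathrm{Pic}(X)=\mathbb Z\cdot L$, any line subbundle of $F$ has the form $\OO_X(kL)$, and the condition $h^0(X,\OO_X(kL))\geq 2$ forces $k\geq 1$. As Lazarsfeld--Mukai bundles on K3 surfaces of Picard rank one are $\mu_L$-semistable, the slope comparison
\[
k(2g-2)=\deg_L\OO_X(kL)\;\leq\;\mu_L(F)=\frac{2g-2}{s}
\]
forces $k\leq 1/s<1$ whenever $s\geq 2$, ruling out subpencils; the borderline case $s=1$, where $F$ is itself a line bundle, must be treated separately via classical regularity statements for polarized K3 surfaces.
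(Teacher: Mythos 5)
Your proposal is correct and follows essentially the same route as the paper's own proof: apply Theorem~\ref{rk2} to the Lazarsfeld--Mukai bundle $M_E^{\vee}$ (with $h^0(X,M_E^{\vee})=r+s$), identify $M_{M_E^{\vee}}\cong E^{\vee}$ via the dualized evaluation sequence, and conclude by Serre duality on the $K3$ surface. The only divergence is the no-subpencil step: the paper deduces it from its lemma that the resonance vanishes whenever $\mathrm{Pic}(X)=\mathbb Z\cdot L$, proved by showing $H^0(X,E\otimes L^{\vee})=0$ directly from the defining extension, rather than by invoking slope semistability of Lazarsfeld--Mukai bundles as you do; this avoids quoting stability, although, like your slope argument, it also degenerates when $s=1$ (where $M_E^{\vee}$ is a line bundle and automatically carries subpencils), a borderline case the paper's proof does not separately address either.
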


In connection with Green's Conjecture, of particular relevance is the case $g=2r$ for $r\geq 2$, when $E$ is the \emph{unique} Lazarsfeld-Mukai bundle on $X$ having Mukai vector $v(E)=(r, L, 2)$. Theorem \ref{lm_eredmeny} reads in this case $$H^1\bigl(X,\Sym^{r+1}E\bigr)=0.$$  Remarkably, an independent geometric proof of the vanishing $H^1\bigl(X, \Sym^{r+1} E\bigr)=0$ (whose failure is a divisorial condition on the moduli space $\mathcal{F}_{g}$ of polarized $K3$ surfaces of genus $g$) would give yet another proof, different from Voisin's \cite{V02}, \cite{V05} or from that of \cite{AFPRW1} of the Generic Green Conjecture. Note that in this case
$$v\bigl(\Sym^{r+1} E\bigr)=\left({2r\choose r+1}, {2r\choose r}L, \frac{3r+2}{r}{2r\choose r+1}\right).$$

\vskip 4pt

\noindent{\bf The Voisin curve of a polarized $K3$ surface of odd genus.}

Assume now that $(X,L)$ is a polarized $K3$ surface of odd genus $g=2r+1\geq 11$, with $\mbox{Pic}(X)=\mathbb Z\cdot L$. The moduli space
$\widehat{X}:=M_L(2,L, r)$ turns out to be  a smooth $K3$ surface, called the \emph{Fourier-Mukai partner} of $X$. Furthermore, as explained in
\cite{M2}, there is a canonical way to endow $\widehat{X}$ with a  genus $g$ polarization $\widehat{h}$.

We fix a general curve $C\in |L|$, thus via Lazarsfeld's result \cite{L86}, the curve $C$ is Petri general of genus $2r+1$ and $W^1_{r+2}(C)$ is a smooth curve.  Voisin \cite{V1} associated to any pencil $A\in W^1_{r+2}(C)$ the Lazarsfeld-Mukai rank $2$ bundle $E_{C,A}\in \widehat{X}$, whose restriction $E_A:=E_{C,A}\otimes \mathcal{O}_C$ to $C$  sits in an extension
$$0\lra A\lra E_A\lra \omega_C\otimes A^{\vee}\lra 0,$$
such that  $h^0(C,E_A)=h^0(C,A)+h^0(C,\omega_C\otimes A^{\vee})=r+2$, see also  \cite{ABS1}. This assignment induces a map
$W^1_{r+2}(C)\rightarrow \widehat{X}$.  Since for a general $[E] \in \widehat{X}$, the restriction $E_A$ has canonical determinant and $h^0(C, \omega_C)=2h^0(X,E)-3$, we observe that the locus of vector bundles $[E]\in \widehat{X}$ whose restriction to $C$ has non-trivial resonance is a curve on $\widehat{X}$, which we call the \emph{Voisin curve} of the pair $(X,C)$. We have the following application of Theorem \ref{thm:alt} concerning the class of this curve:

\begin{thm}\label{thm:classVoisin}
Assume $\mathrm{Pic}(X)=\mathbb Z\cdot L$ and let $C\in |L|$ with $g(C)=2r+1$. The Voisin curve
$$\mathcal{R}(\widehat{X}, C):=\bigl\{[E]\in \widehat{X}: \mathcal{R}(C, E_C)\neq 0\bigr\}$$ is a curve in the linear system
$$\Bigl|\frac{(2r+1)!}{r!\cdot (r+2)!}\widehat{h}\Bigr|.$$ Furthermore, the map $W^1_{r+2}(C)\rightarrow \mathcal{R}(\widehat{X}, C)$ is a resolution of singularities of $\mathcal{R}(\widehat{X}, C)$.
\end{thm}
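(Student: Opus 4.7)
The plan is to realize $\mathcal{R}(\widehat{X},C)$ as the resonance locus of a morphism of vector bundles on $\widehat{X}$, apply Theorem~\ref{thm:alt}, and then verify that the Brill--Noether map $W^{1}_{r+2}(C)\to\widehat{X}$ is a resolution of the resulting divisor. To set up the bundles, let $\mathbf{E}$ be a (quasi-)universal rank $2$ sheaf on $\widehat{X}\times X$, normalized so that $\det\mathbf{E}|_{\widehat{X}\times C}=p^{*}\omega_C$, and let $\pi\colon\widehat{X}\times C\to\widehat{X}$ be the projection. Define
\[
\mathcal{E}:=\pi_{*}\bigl(\mathbf{E}|_{\widehat{X}\times C}\bigr),\qquad \mathcal{F}:=\pi_{*}\bigl(\det\mathbf{E}|_{\widehat{X}\times C}\bigr)\cong H^{0}(C,\omega_C)\otimes\mathcal{O}_{\widehat{X}},
\]
of ranks $e=r+2$ and $2e-3=2r+1$ respectively. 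The fibrewise determinant induces $\phi\colon\bigwedge^{2}\mathcal{E}\to\mathcal{F}$, and a nonzero pure tensor $s_1\wedge s_2\in\ker\phi([E])$ corresponds exactly to two sections spanning a line subbundle of $E_C$, i.e.\ to the condition $\mathcal{R}(C,E_C)\neq 0$. Hence $\mathfrak{Res}(\phi)=\mathcal{R}(\widehat{X},C)$, a divisor by the expected-codimension count.

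Next I would compute the class. Since $c_{1}(\mathcal{F})=0$, Theorem~\ref{thm:alt} gives
\[
[\mathcal{R}(\widehat{X},C)] = -\frac{(2r)!}{r!(r+1)!}\cdot\frac{4r+2}{r+2}\,c_{1}(\mathcal{E}) = -\frac{2\,(2r+1)!}{r!(r+2)!}\,c_{1}(\mathcal{E}).
\]
Because $\mathbf{E}$ has rank $2$ with $\det\mathbf{E}|_{\widehat{X}\times C}=p^{*}\omega_C$, one has $\mathbf{E}^{\vee}\otimes\omega_{\pi}\cong\mathbf{E}$ on $\widehat{X}\times C$, so relative Serre duality yields $R^{1}\pi_{*}(\mathbf{E}|_{\widehat{X}\times C})\cong\mathcal{E}^{\vee}$. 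Applying Grothendieck--Riemann--Roch along $\pi$ with relative Todd class $1+(1-g)\,p^{*}[\mathrm{pt}]$ and reading off the degree-one part on $\widehat{X}$ gives $2\,c_{1}(\mathcal{E})=-\delta$, where $\delta$ is the K\"unneth $H^{2}(\widehat{X})\otimes H^{2}(C)$-component of $c_{2}(\mathbf{E})$. Using Mukai's morphism $\theta_{v}\colon v^{\perp}\to H^{2}(\widehat{X},\mathbb{Z})$ for $v=(2,L,r)$ together with the definition of $\widehat{h}$ recalled in \cite{M2}, one identifies $\delta=\widehat{h}$, so $c_{1}(\mathcal{E})=-\widehat{h}/2$ and the stated class formula follows.

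For the resolution statement, Lazarsfeld's theorem \cite{L86} ensures that $C$ is Petri general, hence $W^{1}_{r+2}(C)$ is smooth and irreducible of dimension $\rho(g,1,r+2)=1$. The map $\eta\colon A\mapsto E_{C,A}$ lands in $\mathcal{R}(\widehat{X},C)$ by construction. Conversely, for any $[E]\in\mathcal{R}(\widehat{X},C)$, the restriction $E_C$ carries a line subbundle $A$ with $h^{0}(A)\geq 2$, and the canonical character of the Lazarsfeld--Mukai construction (as the kernel of the evaluation $H^{0}(C,A)\otimes\mathcal{O}_X\to \iota_{*}A$) forces $E\cong E_{C,A}$, so $\eta$ is surjective. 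For generic $A$ one has $h^{0}(C,A)=2$ and $A$ is the unique subpencil of $E_A$, hence $\eta$ is generically injective; together with the smoothness of $W^{1}_{r+2}(C)$ this yields the desired resolution of singularities.

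The most delicate step will be the identification $\delta=\widehat{h}$, which requires careful bookkeeping of the quasi-universal bundle normalization and a translation between the Chern class computation and Mukai's polarization. A cleaner alternative is to verify the proportionality with $\widehat{h}$ after intersecting both sides of the class identity with a test curve on $\widehat{X}$ of known $\widehat{h}$-degree, thereby reducing the identification to a single numerical check.
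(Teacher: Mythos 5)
Your overall strategy coincides with the paper's: identify the Voisin curve with the resonance divisor $\mathfrak{Res}(\phi)$ of the fibrewise determinant map $\phi\colon \bigwedge^2\E\to\F$, apply Theorem \ref{thm:alt}, and compute first Chern classes by Grothendieck--Riemann--Roch. However, there are two genuine gaps at the heart of your class computation. First, you assume a (quasi-)universal bundle $\mathbf{E}$ normalized so that $\det\mathbf{E}|_{\widehat{X}\times C}=p^*\omega_C$, which is what gives you $c_1(\mathcal{F})=0$. This normalization is not available in general: a Poincar\'e bundle on $X\times\widehat{X}$ exists only when $r$ is odd (see the footnote in the paper), and even when it exists you can only modify it by pullbacks of line bundles from $\widehat{X}$, which changes the determinant along $\widehat{X}$ by a \emph{square}; killing the $\widehat{X}$-direction of the determinant therefore requires a $2$-divisibility that is not automatic. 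Second, and more importantly, the identification $\delta=\widehat{h}$ --- i.e., matching the class produced by GRR with Mukai's polarization --- is precisely the substantive content of the proof, and you explicitly leave it as a flagged ``delicate step'' with only a strategy (via $\theta_v$ or a test curve). Without it, the statement that the curve lies in $\bigl|\frac{(2r+1)!}{r!\,(r+2)!}\widehat{h}\bigr|$ is not established. The paper handles both issues at once by \emph{not} normalizing: it writes $c_1(\P)=\pi_1^*h+\pi_2^*\varphi$, defines $\psi$ by $\pi_1^*h\cdot c_2^{\mathrm{mid}}(\P)=[\mathrm{pt}]\otimes\pi_2^*\psi$, computes $c_1(\E)=\frac{3r+2}{2}\varphi-\frac{\psi}{2}$ and $c_1(\F)=(2r+1)\varphi$ (Proposition \ref{prop:Chernclasses}), and observes that the twist-invariant combination $c_1(\F)-\frac{4r+2}{r+2}c_1(\E)=\frac{2r+1}{r+2}(\psi-2r\varphi)$ is identified with $\frac{2r+1}{r+2}\widehat{h}$ by Mukai's formula (\ref{eq:dualpol}). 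Your GRR bookkeeping (relative Serre duality giving $R^1\pi_*\cong\mathcal{E}^{\vee}$ and $2c_1(\mathcal{E})=-\delta$) is fine as far as it goes, but it only reproduces the answer conditionally on the two unverified inputs above.

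On the second assertion of the theorem, your argument (smoothness of $W^1_{r+2}(C)$ by Petri, surjectivity of $A\mapsto E_{C,A}$ onto the resonance curve, generic injectivity) is the right shape, but the generic injectivity --- equivalently, that a general bundle $[E]$ on the Voisin curve has a \emph{unique} subpencil of its restriction $E_C$ --- is asserted rather than proved; the claim that any subpencil of $E_C$ forces $E\cong E_{C,A}$ also needs the standard argument that the subpencil and its residual are globally generated so that the Lazarsfeld--Mukai construction applies. These points should be addressed before the resolution-of-singularities claim can be considered complete.
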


\vskip 3pt

\noindent{\bf Gaussian Koszul modules.}

Gaussian maps provide another context where Koszul modules appear naturally. Suppose $L$ is a very ample line bundle on a complex projective variety $X$ and denote by $\varphi_L\colon X\hookrightarrow \PP^r=\PP\bigl(H^0(X,L)^{\vee}\bigr)$ the corresponding embedding. Let $\I\subseteq \OO_{\PP^r}$ be the ideal sheaf of $X$. We then consider the Gaussian map
$$\psi_L:\bigwedge^2 H^0(X,L)\rightarrow H^0(X,\Omega_X^1\otimes L^2), \ \ \mbox{ } \ \mbox{  }  \psi_L(f\wedge g)=f dg-g df.$$
The cokernel of $\psi_L$ parametrizes deformations of the cone over the embedded variety $X\subseteq \PP^r$ inside $\PP^{r+1}$.
Wahl showed \cite{W} that for a curve $C$ lying on a $K3$ surface, the map $\psi_{\omega_C}$ is not surjective. A remarkable converse of this result has been recently established by Arbarello-Bruno-Sernesi \cite{ABS2}.

\vskip 3pt
We fix $(X,L)$ and set  $V:=H^0(X,L)^{\vee}$ and $K^{\perp}:=\mbox{Ker}(\psi_L)$, to obtain a Koszul module
$$\mathcal{G}(X,L):=W(V,K)$$
whose resonance is always trivial. We have the following result, indirectly  concerning the Koszul module $\mathcal{G}(X,L)$:

\begin{thm}\label{infneigh}
Let $X\subseteq \PP^r$ be a smooth variety satisfying $q(X)=0$ and $H^0\bigl(X,\Omega_X^1(1)\bigr)=0$. If $X_b\subseteq \PP^r$ is the $b$-th infinitesimal neighborhood of $X$ defined by the ideal $\I^{b+1}$, then the maps
$$H^0\bigl(X_b,\OO_{X_b}(a)\bigr)\rightarrow H^0\bigl(X_{b-1}, \OO_{X_{b-1}}(a)\bigr)$$
are isomorphisms for all $b\geq a\geq r$.
\end{thm}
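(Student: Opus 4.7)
The plan is to reduce the theorem to a pair of cohomological vanishings for symmetric powers of the conormal bundle of $X$, and to control these via the Gaussian Koszul module $\mathcal{G}(X,L)$. Since $X$ is smooth, $\I_X^b/\I_X^{b+1}\cong \Sym^b N^\vee_{X/\PP^r}$, and the short exact sequence
\[0\to \Sym^b N^\vee_{X/\PP^r}\otimes L^a\to \OO_{X_b}(a)\to \OO_{X_{b-1}}(a)\to 0\]
together with its long exact cohomology sequence shows that the restriction map in the theorem is an isomorphism whenever
\[H^0(X,\Sym^b N^\vee\otimes L^a)=H^1(X,\Sym^b N^\vee\otimes L^a)=0.\]
It therefore suffices to establish these two vanishings for all $b\geq a\geq r$.

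I would then analyze the conormal-Euler sequence $0\to N^\vee(1)\to M_L\to \Omega^1_X(1)\to 0$ (where $M_L=\Omega^1_{\PP^r}(1)|_X$ is the kernel of the evaluation $H^0(X,L)\otimes\OO_X\to L$) and its induced filtration on $\Sym^b M_L$, whose graded pieces have the form $\Sym^i N^\vee\otimes\Sym^{b-i}\Omega^1_X\otimes L^b$. The hypotheses $q(X)=0$ and $H^0(X,\Omega^1_X(1))=0$ are calibrated precisely to suppress the contributions of the pieces with $i<b$: the second hypothesis kills the factors involving $\Omega^1_X(1)$ and its higher symmetric powers (via Bott-type vanishing on $\PP^r$ restricted to $X$), while the first ensures that the structure-sheaf contributions are trivial. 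What remains is an identification of the target cohomology $H^i(X,\Sym^b N^\vee\otimes L^a)$ with a graded component $W_q(\mathcal{G}(X,L))$ of the Gaussian Koszul module, for an appropriate index $q$ lying in the range $q\geq r-2$.

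Finally, $\mathcal{G}(X,L)$ has trivial resonance: if $\sigma_1\wedge\sigma_2\in\ker(\psi_L)$ with $\sigma_1,\sigma_2\in H^0(X,L)$ nonzero, the identity $\sigma_1 d\sigma_2-\sigma_2 d\sigma_1=0$ forces $d(\sigma_2/\sigma_1)=0$ wherever $\sigma_1\neq 0$, so $\sigma_2/\sigma_1$ is constant on the connected variety $X$ and $\sigma_1\wedge\sigma_2=0$; hence $\ker(\psi_L)$ contains no nonzero pure tensor, which is exactly the vanishing of $\mathcal{R}(V,K)$. The equivalence (\ref{equivalence}) then yields $W_q(\mathcal{G}(X,L))=0$ for every $q\geq r-2=\dim V-3$, and combining this with the identification of the previous paragraph delivers the desired cohomological vanishings. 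The main technical obstacle is producing that identification faithfully: the Koszul-module range $q\geq r-2$ must be translated into the theorem's range $b\geq a\geq r$ through a careful study of how the Koszul differentials defining $\mathcal{G}(X,L)$ interact with the filtration on $\Sym^b M_L$, and it is precisely at this step that the two hypotheses on $X$ are consumed.
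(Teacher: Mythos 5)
Your reduction to the two vanishings $H^0\bigl(X,\Sym^b N_L^{\vee}\otimes L^a\bigr)=H^1\bigl(X,\Sym^b N_L^{\vee}\otimes L^a\bigr)=0$ is the correct first step, and your argument that the Gaussian module has trivial resonance is exactly the one the paper uses. But the heart of your plan --- the ``identification'' of these cohomology groups with graded components $\mathcal{G}_q(X,L)$ --- is precisely what you leave unproven, and the route you sketch for it does not work. The actual identification (Theorem \ref{thm:koszul-Wahl}) realizes $\mathcal{G}_q(X,L)^{\vee}$ as the kernel of $H^1\bigl(X,\Sym^{q+2}R_L\bigr)\to \Sym^{q+1}H^0(X,L)\otimes H^1(X,R_L)$, where $R_L=N_L^{\vee}\otimes L$, and it is obtained by resolving $\Sym^{q+2}R_L$ by symmetric powers of the jet-bundle sequence $0\to R_L\to H^0(X,L)\otimes\OO_X\to J_1(L)\to 0$ --- not by filtering $\Sym^b M_L$ with graded pieces $\Sym^i N_L^{\vee}\otimes\Sym^{b-i}\Omega^1_X\otimes L^b$. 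On a general $X$ there is no ``Bott-type vanishing'' killing the cohomology of those higher pieces; the hypotheses $q(X)=0$ and $H^0(X,\Omega^1_X\otimes L)=0$ are used only to deduce $H^1(X,M_L)=0$ and hence $H^1(X,R_L)\cong H^0(X,\Omega^1_X\otimes L)=0$, so that resonance vanishing gives $H^1\bigl(X,\Sym^b R_L\bigr)=0$ for $b\geq r$ (Corollary \ref{cor:wahl}). Since $\Sym^b R_L=\Sym^b N_L^{\vee}\otimes L^b$, this covers only the twist $a=b$.

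For $a<b$ (set $c=b-a\geq 1$) the relevant groups are $H^i\bigl(X,\Sym^b R_L\otimes L^{-c}\bigr)$, and these are \emph{not} graded components of the Gaussian module for any choice of $q$, so no appeal to the equivalence (\ref{equivalence}) can reach them; this is why the paper treats this range by a separate argument: Kodaira vanishing gives a surjection $H^0(X,M\otimes L^{-c})\twoheadrightarrow H^1\bigl(X,\Sym^b R_L\otimes L^{-c}\bigr)$ together with an injection $H^0(X,M\otimes L^{-c})\hookrightarrow \Sym^{b-1}H^0(X,L)\otimes H^0\bigl(X,J_1(L)\otimes L^{-c}\bigr)$, and the latter group vanishes because $H^0(X,\Omega^1_X)=0$ when $c\geq 2$, while for $c=1$ a nonzero section of $J_1(L)\otimes L^{\vee}$ would split the sequence (\ref{eq:def-eta}), contradicting that its extension class is the Atiyah class $c_1(L)\neq 0$. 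Finally, injectivity of the restriction maps needs $H^0\bigl(X,\Sym^b N_L^{\vee}\otimes L^a\bigr)=0$, which is an $H^0$-statement that Koszul-module vanishing (a statement about subgroups of $H^1$) cannot produce; the paper proves it directly, using $H^0(X,J_1(L))\cong H^0(X,L)$ and the injectivity of $\Sym^b H^0(X,L)\to \Sym^{b-1}H^0(X,L)\otimes H^0(X,L)$ for $a=b$, and the inclusion into $\Sym^b H^0(X,L)\otimes H^0(X,L^{-c})=0$ for $a<b$. So the proposal has a genuine gap at its central step, and the missing ingredients are the jet bundle resolution, Kodaira vanishing, and the Atiyah-class non-splitting argument.
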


A more general version of Theorem \ref{infneigh}, without any assumptions on $X$, is provided by Theorem \ref{thm:koszul-Wahl}. To place this result  into context, we recall that Hartshorne \cite{hartshorne2} showed that for a vector bundle $\mathcal{F}$ on $\PP^r$ and a closed subvariety $X\subseteq \PP^r$, for all $j\geq 0$ the
maps $H^j\bigl(X_b, \mathcal{F}\otimes \OO_{X_b}\bigr)\rightarrow H^j\bigl(X_{b-1}, \mathcal{F}\otimes \OO_{X_{b-1}}\bigr)$ are isomorphisms if $b\gg 0$. A quantitative version of these results for $\mathcal{F}=\OO_{\PP^r}(a)$ has been recently obtained in \cite{bhatt}: The restriction maps $$H^j\bigl(X_b,\OO_{X_b}(a)\bigr)\rightarrow H^j\bigl(X_{b-1}, \OO_{X_{b-1}}(a)\bigr)$$ are isomorphisms for all $j\geq 0$ as long as $b\geq \mbox{dim}(X)+a+1$, see \cite[Remark 2.18]{bhatt}. Our Theorem \ref{infneigh} can be viewed as a significant improvement (under certain assumptions) of this result at the level of global sections.

\vskip 3pt

Concerning the hypothesis of Theorem \ref{infneigh}, they are satisfied for most Fano varieties (for instance for all Hermitian Symmetric Spaces of type A, B, C or D, see \cite{Sn}). Also, if  $X$ is a Fano threefold then always $q(X)=0$, whereas from the Iskosvskikh-Mukai classification it follows that the condition $H^0(X,\Omega_X^1(1))\neq 0$ implies that $X$ is of index one and has genus $10$ or $12$, see \cite{JR}. The hypothesis of Theorem \ref{infneigh} are also satisfied for many varieties of Kodaira dimension zero. For instance, if $(X,L)$ is a polarized $K3$ surface of degree $L^2=2g-2$, the condition
$H^0\bigl(X, \Omega_X^1\otimes L\bigr)=0$ is equivalent to the statement that a general curve of genus $g$ lies on a $K3$ surface and is thus satisfied if and only $g\leq 9$ or $g=11$, see \cite{Be2}.

\begin{ack}
Above all, we acknowledge the important contribution of \c Stefan Papadima. This paper, which is a natural continuation of \cite{AFPRW1} and \cite{AFPRW2} is part of a project that was initiated by him. We also profited from numerous discussions with Alex Suciu  related to this circle of ideas.

\vskip 3pt

{\small{Aprodu  was supported by the Romanian Ministry of Research and
Innovation, CNCS - UEFISCDI, grant
PN-III-P4-ID-PCE-2020-0029, within PNCDI III.  Farkas was supported by the DFG Grant \emph{Syzygien und Moduli} and by the ERC Advanced Grant SYZYGY.
Raicu was supported by the NSF Grants No.~1901886 and ~2302341. Weyman was supported by the grants MAESTRO NCN - UMO-2019/34/A/ST1/00263 - Research in Commutative Algebra and Representation Theory and NAWA POWROTY - PPN/PPO/2018/1/00013/U/00001 - Applications of Lie algebras to Commutative Algebra. This project has received funding from the European Research Council (ERC) under the European Union Horizon 2020 research and innovation program (grant agreement No. 834172)
}}
\end{ack}

\section{Basics on Koszul modules}
\label{sec:kmod}

We recall the basic definitions of Koszul modules following  \cite{AFPRW1}, \cite{AFPRW2}, \cite{PS15}. For simplicity, we stick to characteristic zero and let $V$ be a complex vector space of dimension $n\ge 2$ and denote by $S:=\Sym V$ the symmetric algebra of~$V$. We consider the standard grading on $S$, where the elements in $V$ are of degree 1. We fix a linear subspace $K\subseteq \bigwedge^2V$ of dimension~$m$ and denote by $\iota\colon K\to \bw^2V$ the inclusion and let $K^\perp:=\ker(\iota^{\vee})\subseteq \bigwedge^2 V^{\vee}$. We introduce the Koszul differentials
\[
\delta_p\colon \bigwedge^pV\otimes S\to \bigwedge^{p-1}V\otimes S,
\]
\[
\delta_p(v_1 \wedge \cdots \wedge v_p \oo f) = \sum_{j=1}^p (-1)^{j-1} v_1 \wedge \cdots \wedge \widehat{v_j} \wedge \cdots \wedge v_p \oo v_j f.
\]
We have a decomposition $\delta_p=\bigoplus_q\delta_{p,q}$ into graded pieces, where
\begin{equation}\label{eq:delta-pq}
\delta_{p,q}:\bw^p V \oo \Sym^q V \to \bw^{p-1}V \oo \Sym^{q+1}V.
\end{equation}

\medskip
The Koszul module $W(V,K)$ defined in the Introduction is a graded $S$-module, whose degree $q$ component has the following description.
\begin{equation}
\label{eqnWqDef}
W_q(V,K)=\mathrm{Coker}\Bigl\{\bigwedge^3V\otimes \Sym^{q-1}V\lra \Bigl(\bigwedge^2V/K\Bigr)\otimes \Sym^qV\Bigr\}.
\end{equation}

Since the Koszul complex is exact, it is often convenient to realize $W_q(V,K)$ as the middle cohomology of the following complex of vector spaces:
\begin{equation}\label{eqn:W}
\xymatrixcolsep{5pc}
\xymatrix{
K \oo \Sym^q V \ar[r]^{\delta_{2, q}|_{K \oo \Sym^q V}} & V\oo \Sym^{q+1} V \ar[r]^{\delta_{1,q+1}} & \Sym^{q+2} V.
}
\end{equation}

As pointed out in \cite{PS15} and further explained in \cite{AFPRW1}, \cite{AFPRW2}, the construction of Koszul modules displays good functoriality properties.  For instance, if  $K \subseteq K'\subseteq \bigwedge^2 V$ are linear susbspaces, one has an
induced  surjective morphism of graded $S$-modules
\begin{equation}
\label{eq:wnat}
W(V,K) \twoheadrightarrow W(V,K').
\end{equation}

\subsection{Resonance varieties}
\label{subsec:resonance}

Building on work of Green-Lazarsfeld \cite{GL91}, Dimca-Papadima-Suciu \cite{DPS} and others,  Papadima and Suciu \cite{PS15} gave an algebraic definition of the resonance variety associated to a pair $(V,K)$ as above, which we now recall.

\begin{defn}\label{def:resonance}
The {\em resonance variety} associated  to the pair $(V,K)$ is the locus
\begin{equation}
\label{eq:defr}
\mathcal R(V,K):=\left\{a\in V^\vee : \mbox{there exists }b\in V^\vee \mbox{ such that } a\wedge b\in K^\perp\setminus \{0\} \right\}\cup \{0\}
\end{equation}
\end{defn}

The resonance variety $\mathcal{R}(V,K)$ is the union of $2$-dimensional subspaces of $V^{\vee}$ parameterized by the intersection $\PP K^\perp \cap\mathrm{Gr}_2(V^\vee )$, where $\mathrm{Gr}_2(V^{\vee})\subseteq \PP\bigl(\bigwedge^2 V^{\vee}\bigr)$ is the Pl\"ucker embedding. Setting up the diagram
\[
\xymatrixcolsep{5pc}
\xymatrix{
\Xi \ar[r]^{\pi} \ar[d]^{p_1}& \mathrm{Gr}_2(V^\vee )\\
\mathbf{P} V^\vee &
}
\]
where $\Xi\subseteq \PP V^\vee\times \mathrm{Gr}_2(V^\vee )$ is the incidence variety, we observe that $\mathcal{R}(V,K)$ is the affine cone over the following projective variety $$\mathbf{R}(V,K) :=p_1\bigl(\pi^{-1}(\PP K^\perp\cap\mathrm{Gr}_2(V^\vee ))\bigr),$$ which we refer to as the \emph{projectivized resonance variety} of $(V,K)$. Note that the correspondence $p_1\circ \pi^{-1}$ mapping a point $[a\wedge b]\in  \PP K^\perp\cap\mathrm{Gr}_2(V^\vee )$ to the line $\ell_{ab}$ in $\mathbf{P}V^\vee$ passing through $[a]$ and $[b]$ gives a natural bijection between $\PP K^\perp \cap\mathrm{Gr}_2(V^\vee )$ and the set of lines contained in $\mathbf{R}(V,K)$, whose inverse is $\ell_{ab}\mapsto\pi (p_1^{-1}([a])\cap\ p_1^{-1}([b]))$.

\vskip 3pt

It was showed in \cite[Lemma~2.4]{PS15} that away from $0$, the support of the graded $S$-module $W(V, K)$ inside $V^\vee $ coincides with the resonance variety $\mathcal{R}(V, K)$.  In particular,
\begin{equation}
\label{eq:mainequiv}
\PP K^\perp\cap\mathrm{Gr}_2(V^\vee )=\emptyset \Longleftrightarrow \mathcal R(V,K)= \{0\}
\Longleftrightarrow \dim_{\mathbb C}W(V,K) < \infty.
\end{equation}

In \cite{AFPRW1} we provide a sharp vanishing result for Koszul modules with vanishing resonance. This is the starting point for many of the geometric applications in this paper.

\begin{thm}\label{thm:mainresult}
Let $V$ be a complex $n$-dimensional vector space and let $K\subseteq\bw^2 V$ be a subspace such that $\mc{R}(V,K)=\{0\}$. We have that $W_{q}(V,K)=0$ for all $q\geq n-3$.  Furthermore, if $\mathcal R(V,K)=\{0\}$, then the following inequality holds
\[\dim\, W_q(V,K) \leq {n+q-1 \choose q} \frac{(n-2)(n-q-3)}{q+2}, \mbox{ for  }q=0, \ldots, n-4,\]
with equality if $\mbox{dim}(K)=2n-3$.
\end{thm}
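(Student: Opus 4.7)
The plan is to reduce to the minimal case $\dim K = 2n-3$ by functoriality, translate the theorem into a concrete injectivity statement via an Euler-characteristic identity, and then establish that injectivity first on a distinguished Weyman model and finally on an arbitrary trivial-resonance $K$.

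\textbf{Reduction and Euler characteristic.} The surjection (\ref{eq:wnat}) makes $\dim W_q(V,K)$ monotone non-increasing in $K$, so both claims of the theorem reduce to the case $\dim K = 2n-3$; that any trivial-resonance $K$ contains a $(2n-3)$-dimensional subspace with still trivial resonance follows from an iterative enlargement of $K^\perp$, using that $\mathrm{Gr}_2(V^\vee) \subseteq \PP(\bw^2V^\vee)$ has codimension $\binom{n-2}{2}=\binom{n}{2}-(2n-3)$. Once $\dim K = 2n-3$, the surjectivity of $\delta_{1,q+1}$ in the complex (\ref{eqn:W}), together with Koszul exactness and a short binomial simplification, yields
\[
\dim W_q(V,K) \;=\; \dim \ker\bigl(\delta_{2,q}|_{K\oo\Sym^qV}\bigr) \;+\; \binom{n+q-1}{q}\frac{(n-2)(n-q-3)}{q+2},
\]
reducing everything in the range $0 \leq q \leq n-3$ to the injectivity of $\delta_{2,q}|_{K\oo\Sym^qV}$. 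The vanishing $W_q = 0$ for $q > n-3$ is then automatic, since $W(V,K)$ is generated in degree $0$ as a $\Sym V$-module, so $W_{n-3}=0$ propagates to all larger $q$.

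\textbf{The Weyman model and propagation.} I would first verify the required injectivity at the Weyman subspace $K_{\mathrm{Wey}}\subseteq \bw^2V$ obtained by taking $V=\Sym^{n-1}U$ with $\dim U=2$ and letting $K_{\mathrm{Wey}}^\perp$ cut out the tangent developable of the rational normal curve in $\PP V^\vee$; the absence of chords on this developable gives $\mc R(V,K_{\mathrm{Wey}})=\{0\}$, and the $SL(U)$-equivariance of every object in sight reduces the injectivity to an explicit combinatorial verification on $SL(U)$-irreducibles via Clebsch-Gordan. For an arbitrary trivial-resonance $K$ of dimension $2n-3$, I would then propagate the injectivity using upper semicontinuity of $\dim\ker(\delta_{2,q}|_K)$ on the Grassmannian $\GG = \mathrm{Gr}_{2n-3}(\bw^2V)$, combined with a degeneration argument showing that the jump locus is supported inside the resonance divisor $\cD_{\mathfrak{Res}} \subseteq \GG$.

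\textbf{Main obstacle.} The crux is unambiguously the explicit computation on the Weyman model: it is essentially equivalent to Green's conjecture for the tangent developable of the rational normal curve and is where all genuine representation-theoretic and combinatorial difficulty is concentrated. Once this central input is available, the functoriality reduction, the Euler-characteristic identity, and the semicontinuity-plus-degeneration propagation are all relatively formal.
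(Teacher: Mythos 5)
Your reduction steps are sound: the surjection (\ref{eq:wnat}) together with a general enlargement of $K^\perp$ inside $\bigwedge^2 V^\vee$ avoiding $\Gr_2(V^\vee)$ does reduce both claims to the case $\dim K=2n-3$, and your Euler--characteristic identity is correct (one checks directly that $n\binom{n+q}{q+1}-\binom{n+q+1}{q+2}-(2n-3)\binom{n+q-1}{q}=\binom{n+q-1}{q}\frac{(n-2)(n-q-3)}{q+2}$), so in the divisorial case the whole theorem is equivalent to the injectivity of $\delta_{2,n-3}$ restricted to $K\otimes \Sym^{n-3}V$; injectivity in degrees $q<n-3$ then follows automatically, since $\ker\bigl(\delta_2|_{K\otimes S}\bigr)$ is a graded submodule of the torsion-free module $K\otimes S$. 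You also correctly isolate the Weyman-module computation as the real content; note that the present paper does not prove Theorem \ref{thm:mainresult} at all but quotes it from \cite{AFPRW1}, where that computation (via Hermite reciprocity and the tangent developable of the rational normal curve) is carried out.

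The genuine gap is your propagation step. Upper semicontinuity of $\dim\ker\bigl(\delta_{2,n-3}|_{K\otimes\Sym^{n-3}V}\bigr)$ on $\GG$, combined with the Weyman point, only shows that the non-injectivity locus $\mathcal{D}_{\mathfrak{Kosz}}$ is a proper divisor, i.e.\ that the \emph{generic} $(2n-3)$-dimensional $K$ satisfies $W_{n-3}(V,K)=0$; the theorem asserts this for \emph{every} $K$ with $\mathcal{R}(V,K)=\{0\}$, which is exactly the inclusion $\mathrm{Supp}(\mathcal{D}_{\mathfrak{Kosz}})\subseteq\mathrm{Supp}(\mathcal{D}_{\mathfrak{Res}})$, and your phrase ``a degeneration argument showing that the jump locus is supported inside the resonance divisor'' is a restatement of that claim, not an argument for it. The missing idea is the degree-and-multiplicity bookkeeping carried out in Section 3 of this paper: $[\mathcal{D}_{\mathfrak{Kosz}}]=\binom{2n-4}{n-1}[\mathcal{L}]$ by (\ref{eq:class-D1}); $\mathcal{D}_{\mathfrak{Res}}$ is the Chow form of $\Gr_2(V^\vee)$, reduced and irreducible of degree the Catalan number $C_{n-2}$; one has the easy inclusion $\mathrm{Supp}(\mathcal{D}_{\mathfrak{Res}})\subseteq\mathrm{Supp}(\mathcal{D}_{\mathfrak{Kosz}})$ (Lemma \ref{lem:incl}) and, crucially, $\dim W_{n-3}(V,K)\geq n-2$ for every $[K]\in\mathcal{D}_{\mathfrak{Res}}$ (Lemma \ref{lem:graphic}, obtained by specializing $K$ into a monomial subspace $K'$ whose Koszul module has Hilbert series $\sum_q(q+1)t^q$), so $\mathcal{D}_{\mathfrak{Res}}$ enters the degeneracy divisor with multiplicity at least $n-2$; since $\binom{2n-4}{n-1}=(n-2)\,C_{n-2}$, Lemma \ref{lem:divisorial} leaves no room for any component of $\mathcal{D}_{\mathfrak{Kosz}}$ outside $\mathcal{D}_{\mathfrak{Res}}$. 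This is precisely the content of Theorem \ref{thm=GreenToKoszul}, and some such global argument is what turns generic vanishing into vanishing for all trivial-resonance $K$; without it your sketch proves only the generic statement, and in particular also leaves the equality assertion for $q\leq n-4$ unproven for non-generic $K$ of dimension $2n-3$.
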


The connection between resonance and Koszul modules shows that the resonance carries a natural \emph{scheme} structure which, in some cases might be non--reduced. In the forthcoming paper \cite{AFRS} we shall have a close look at this phenomenon.

\subsection{Isotropy and separability}(see \cite{AFRS})
In geometric situations (like those when the resonance variety parametrizes complexes with jumping cohomology in the spirit of \cite{DPS}), the resonance variety $\mathcal{R}(V,K)$ often enjoys further properties, which we summarize in a definition. Before formulating it, let $E:=\bw V^{\vee}$ be the exterior algebra on the vector space
$V^{\vee}$, and write $\langle U \rangle_E$ for the ideal in $E$ generated
by a subset $U\subseteq E$.

\begin{defn}\label{def:isotropy}
We say that a subspace $\overline{V}^\vee\subset V^\vee$ is
\begin{itemize}
\item \defi{Isotropic}, if
 $\bw^2 \overline{V}^{\vee} \subseteq K^{\perp}$.
\item \defi{Separable} if $K^{\perp} \cap \langle \overline{V}^{\vee}\rangle_E \subseteq \bw^2 \overline{V}^{\vee}$.
\item \defi{Strongly isotropic}
if it is separable and isotropic, that is, if
$K^{\perp} \cap \langle \overline{V}^{\vee}\rangle_E = \bw^2 \overline{V}^{\vee}$.
\end{itemize}
\end{defn}

Similar definitions can be given for the projective subspaces of $\PP V^\vee$.

\begin{defn}\label{def:resprop}
We say that the resonance variety $\mathcal{R}(V,K)$ is
\begin{itemize}
\item \defi{Linear}, if $\mathcal{R}(V,K)$ is a union of linear subspaces of $V^{\vee}$, that is,
$$\mathcal{R}(V,K)=\overline{V}_1\cup \cdots \cup \overline{V}_s.$$
\item \defi{Isotropic}, \defi{separable}, or \defi{strongly isotropic} if it is linear and each component $\overline{V}_i^{\vee}$ of $\mathcal{R}(V,K)$  is isotropic, separable,  or strongly isotropic, respectively.
\end{itemize}
\end{defn}

\medskip

For the relevance of these conditions in the case of resonance varieties associated to hyperplane arrangements we refer to \cite{CSch}. In the paper \cite{AFRS} we relate separability to the reduceness of the projectived resonance scheme and establish an optimal effective version of Chen's rank conjecture for Koszul modules with strongly isotropic resonance.

Note that if two lines contained in $\mathbf{R}(V,K)$ intersect, then the whole plane they generate is contained in $\mathbf{R}(V,K)$. If $[a]\in \mathbf{R}(V,K)$, then the projectivization of the subspace
\[
\mathcal{F}(a):=\bigl\{b\in V^\vee: a\wedge b \in K^\perp\bigr\}
\]
is contained in $\mathbf{R}(V,K)$ and is the maximal projective subspace inside $\mathbf{R}(V,K)$ that passes through $[a]$. Moreover the set $\bigl\{[a\wedge b]: b\in \mathcal{F}(a)\bigr\}$ is contained in $\mathbf{G}\cap\mathbf{P}K^\perp$.

\begin{lemma}
The map $\PP V^{\vee}\ni [a]\mapsto \mathrm{dim}(\mathcal{F}(a))$ is upper--semicontinous.
\end{lemma}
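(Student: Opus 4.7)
The plan is to realize $\mathcal{F}(a)$ as the kernel of a linear map depending linearly on $a$, and then invoke the standard upper-semicontinuity of kernel dimension for morphisms of vector bundles.

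First, fix $a \in V^\vee$ and consider the linear map
\[
\phi_a \colon V^\vee \lra \bw^2 V^\vee / K^\perp, \qquad \phi_a(b) := \overline{a \wedge b}.
\]
By the very definition of $\mathcal{F}(a)$, we have $\mathcal{F}(a) = \ker(\phi_a)$, so $\dim \mathcal{F}(a) = \dim V^\vee - \rk(\phi_a)$. Note in particular that $a \in \mathcal{F}(a)$ always, reflecting the fact that the function is invariant under rescaling $a$ and thus genuinely descends to $\PP V^\vee$.

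Next, I would globalize this construction. Since $\phi_a$ depends linearly on $a$, the assignment $a \mapsto \phi_a$ is a section of $\Hom(V^\vee, \bw^2V^\vee/K^\perp)$ that is linear in $a$, so it descends to a morphism of locally free sheaves on $\PP V^\vee$,
\[
\Phi \colon V^\vee \oo \mathcal{O}_{\PP V^\vee}(-1) \lra \bigl(\bw^2V^\vee/K^\perp\bigr) \oo \mathcal{O}_{\PP V^\vee},
\]
whose fiber at $[a]$ is, up to the choice of representative, the map $\phi_a$. In particular the rank of $\Phi$ at $[a]$ equals $\rk(\phi_a)$ and the dimension of its kernel at $[a]$ equals $\dim \mathcal{F}(a)$.

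Finally, for any integer $d$, the locus
\[
\bigl\{[a] \in \PP V^\vee : \dim \mathcal{F}(a) \geq d\bigr\} = \bigl\{[a] \in \PP V^\vee : \rk(\Phi)_{[a]} \leq n - d\bigr\}
\]
is the zero locus of the $(n-d+1) \times (n-d+1)$ minors of $\Phi$ (in suitable local trivializations), hence is Zariski closed. This proves that $[a] \mapsto \dim \mathcal{F}(a)$ is upper-semicontinuous. There is no real obstacle here: the only point worth checking is that the construction $a \mapsto \phi_a$ genuinely varies algebraically, which is immediate from its bilinear definition.
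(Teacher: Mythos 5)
Your proof is correct and follows essentially the same route as the paper: the morphism $\Phi\colon V^\vee\otimes\mathcal{O}_{\PP V^\vee}(-1)\to \bigl(\bigwedge^2V^\vee/K^\perp\bigr)\otimes\mathcal{O}_{\PP V^\vee}$ you construct is exactly the composed sheaf morphism the paper considers. The only (harmless) difference is the final step: the paper passes to the kernel sheaf and cites Grauert's theorem, whereas you conclude directly from the closedness of the determinantal loci of $\Phi$, which is an equally valid and if anything more elementary finish.
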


\proof
Consider $\mathcal{F}$ the kernel of the composed sheaf morphism on $\PP:=\PP V^{\vee}$
\[
V^\vee\otimes \mathcal{O}_\mathbf{P}(-1)\longrightarrow  \bigwedge^2V^\vee\otimes\mathcal{O}_\mathbf{P}\longrightarrow \bigwedge^2V^\vee/K^\perp\otimes\mathcal{O}_\mathbf{P}.
\]
Then $\mathcal{F}(a)$ as defined above can be identified with the fibre of $\mathcal{F}$ at $[a]$ and we apply Grauert's Theorem.
\endproof

\begin{proposition}
\label{prop:isolated}
If  $[a\wedge b]$ is an isolated point of $\PP K^\perp\cap\mathrm{Gr}_2(V^\vee )$, then the line $\ell_{ab}$ joining $[a]$ and $[b]$ is an isotropic connected component of $\mathbf{R}(V,K)$.
\end{proposition}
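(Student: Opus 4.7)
The plan is to establish two facts: that $\ell_{ab}$ is isotropic, and that it is a connected component of $\mathbf{R}(V,K)$. The first is immediate from the definitions: the $2$-dimensional subspace $\langle a,b\rangle\subseteq V^\vee$ corresponding to $\ell_{ab}$ satisfies $\bw^2\langle a,b\rangle = \mathbb{C}\cdot(a\wedge b)\subseteq K^\perp$, using that $[a\wedge b]\in \PP K^\perp$ by hypothesis. So the heart of the argument is showing that $\ell_{ab}$ is a connected component.

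The key step is to prove that no other line $\ell_{cd}\subseteq \mathbf{R}(V,K)$ meets $\ell_{ab}$. Suppose to the contrary that $[e]\in \ell_{ab}\cap \ell_{cd}$ for some $[c\wedge d]\neq [a\wedge b]$ in $\PP K^\perp \cap \mathrm{Gr}_2(V^\vee)$. Since $\langle a,b\rangle\neq \langle c,d\rangle$ as $2$-planes in $V^\vee$ but share the line $\mathbb{C}e$, we may write $\langle a,b\rangle = \langle e,u\rangle$ and $\langle c,d\rangle = \langle e,v\rangle$ with $e,u,v$ linearly independent. Then $a\wedge b$ is a scalar multiple of $e\wedge u$ and $c\wedge d$ of $e\wedge v$, so both pure wedges $e\wedge u$ and $e\wedge v$ lie in $K^\perp$; hence so does the whole pencil $e\wedge(u+tv)$ for $t\in \PP^1$. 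Each member is a nonzero decomposable bivector, giving a morphism $\PP^1\to \PP K^\perp\cap \mathrm{Gr}_2(V^\vee)$ sending $0\mapsto [a\wedge b]=[e\wedge u]$, and injective because $e\wedge u$ and $e\wedge v$ are linearly independent in $\bw^2 V^\vee$. This contradicts the isolation of $[a\wedge b]$.

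To conclude, set $Z:=(\PP K^\perp\cap \mathrm{Gr}_2(V^\vee))\setminus\{[a\wedge b]\}$, which is closed by the isolation hypothesis. By the previous paragraph, $\mathbf{R}(V,K)\setminus \ell_{ab}= p_1(\pi^{-1}(Z))$, and this subset is closed in $\PP V^\vee$ because $\pi^{-1}(Z)\subseteq \Xi$ is closed and $p_1\colon \Xi\to \PP V^\vee$ is proper. Thus $\ell_{ab}$ is open in $\mathbf{R}(V,K)$; being also closed and connected (it is a projective line), it is a connected component. The main obstacle is the intersection step: once the $\PP^1$-family of decomposable wedges is extracted from a hypothetical intersection, the closedness argument is formal.
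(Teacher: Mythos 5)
Your argument is correct, and it takes a genuinely different route from the paper. The paper first shows (via the preceding semicontinuity lemma, itself relying on Grauert's theorem) that $\mathcal{F}(a)$ is $2$-dimensional, and then rules out the existence of nearby resonance points by a sequential limit argument: a sequence of lines $\ell_n\neq \ell_{ab}$ in $\mathbf{R}(V,K)$ converging to $\ell_{ab}$ would produce points of $\PP K^\perp\cap\mathrm{Gr}_2(V^\vee)$ converging to $[a\wedge b]$, contradicting isolation. You instead exploit the linearity of $K^\perp$: if any other line of $\mathbf{R}(V,K)$ met $\ell_{ab}$, the two corresponding decomposable tensors $e\wedge u$ and $e\wedge v$ would span a pencil of nonzero decomposable tensors inside $K^\perp$, i.e.\ a $\PP^1$ inside $\PP K^\perp\cap\mathrm{Gr}_2(V^\vee)$ through $[a\wedge b]$ --- this is in effect a quantitative form of the paper's earlier remark that two intersecting lines of $\mathbf{R}(V,K)$ span a plane inside it, and it simultaneously handles the possibility $\dim\mathcal{F}(a)>2$ without invoking semicontinuity. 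You then conclude by properness of $p_1$ on the incidence variety: since $Z=(\PP K^\perp\cap\mathrm{Gr}_2(V^\vee))\setminus\{[a\wedge b]\}$ is closed, the union of all other lines $p_1(\pi^{-1}(Z))$ is closed and, by the intersection step, equals $\mathbf{R}(V,K)\setminus\ell_{ab}$, so $\ell_{ab}$ is open and closed, hence a connected component. Your approach buys elementarity and works directly in the Zariski topology, avoiding both Grauert's theorem and the analytic convergence argument; the paper's approach, via the function $[a]\mapsto\dim\mathcal{F}(a)$, has the side benefit of identifying the maximal linear subspace of $\mathbf{R}(V,K)$ through $[a]$ explicitly, but for this proposition your pencil-plus-properness argument is, if anything, tighter.
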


\proof
We prove first that the line $\ell_{ab}\subseteq \PP V^{\vee}$ is a connected component of $\mathbf{R}(V,K)$. Since $[a\wedge b]$ is an isolated point, and the projectivization of $\bigl\{a\wedge b': b'\in \mathcal{F}(a)\bigr\}$ would be contained in $\PP K^\perp\cap \mathrm{Gr}_2(V^\vee )$, it follows that $\mathcal{F}(a)$ is $2$-dimensional, spanned by $a$ and $b$. Denote this subspace by $\overline{V}^\vee$. By semicontinuity, for each $a'$ in a neighborhood of $[a]$ in $\mathbf{R}(V,K)$, we have $\mathrm{dim}(\mathcal{F}(a'))=2$. If $\ell_{ab}$ is not a connected component, then we have a sequence $([a_n])_n\in \mathbf{R}(V,K)\setminus\ell_{ab}$ converging to $[a]$. Without loss of generality, we assume $a_n\rightarrow a$ in $\mathcal{R}(V,K)$ and $\mathrm{dim}(\mathcal{F}(a_n))=2$ for all $n$. Hence we obtain a sequence of lines $(\ell_n)_n$ contained in $\mathbf{R}(V,K)$ and different from $\ell_{ab}$, converging to the limit $\ell_{ab}$. This corresponds to a sequence of points in $\PP K^\perp\cap\mathrm{Gr}_2(V^\vee )$ converging to $[a\wedge b]$, which is impossible. The isotropy of $\overline{V}^\vee$ is straightforward. Indeed, it is isotropic if and only if $a\wedge b\in K^\perp$, which is true by hypothesis.
\endproof

Proposition \ref{prop:genus8} provides one application of Proposition \ref{prop:isolated} in geometric setting.

\section{The Chow form of the Grassmannian of lines and alternating degeneracy loci}
\label{subsec:divisors}

We begin by recording a well-known sufficient conditions for the supports of two Cartier divisors on an algebraic variety to be equal.
Let $X$ be a smooth quasi-projective variety and $\mathcal A$ and $\mathcal B$  vector bundles on $X$ of the same rank $r$ and let
$\varphi \colon \mathcal A\to \mathcal B$ be a vector bundle morphism. Assume its degeneracy locus
\[
D(\varphi):=\Bigl\{x\in X: \mathrm{rk } \bigl\{\varphi(x): \mathcal A(x)\longrightarrow \mathcal B(x)\bigr\} \le r-1\Bigr\}.
\]
is a Cartier divisor on $X$, that is,  $D(\varphi)\neq X$
and that for any point $x$ in an irreducible component $Z$ of $D(\varphi)$, we have $\mathrm{dim} \ \ker(\varphi(x)) \geq k$.
Then $Z$ enters with multiplicity at least $k$ in $D(\varphi)$. We shall use the following well-known fact,  presented here for the convenience of the reader.

\begin{lem}
\label{lem:divisorial}
Let $Y$ be an irreducible projective variety and $U\subseteq Y$ an open subset with $\mathrm{codim}(Y\setminus U, Y)\ge 2$.
Assume $\mathcal A$ and $\mathcal B$ are vector bundles of the same rank on $U$, and we are given a morphism $\varphi\colon \mathcal A\to\mathcal B$, whose degeneracy locus $D_1=D(\varphi)$ is a genuine divisor. Let $D_2$ be a reduced Cartier divisor on $U$ such that
$\mathrm{Supp}(D_2)\subseteq\mathrm{Supp}(D_1)$ and  $[D_1]=k[D_2]\in CH^1(U)$ for some positive integer $k$.
If for any $x\in\mathrm{Supp}(D_2)$ we have $\mathrm{dim } \ker(\varphi(x)) \geq k$, then $\mathrm{Supp}(D_1) = \mathrm{Supp}(D_2)$ and $D_1=k\cdot D_2$ as divisors.
\end{lem}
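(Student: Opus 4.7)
The plan is to show that $D_1-kD_2$ is effective and then use the codimension hypothesis to upgrade the rational equivalence $[D_1-kD_2]=0$ in $CH^1(U)$ to an actual equality of divisors. The key classical input is the multiplicity bound for degeneracy loci of square bundle maps: if $Z$ is an irreducible component of $D(\varphi)$ along which $\dim\ker\varphi(x)\ge k$ for a general $x\in Z$, then $Z$ appears in $D(\varphi)$ with multiplicity at least $k$.

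First I would decompose $D_1=\sum_i m_i Z_i$ into its prime components. Since $D_2$ is reduced and $\mathrm{Supp}(D_2)\subseteq\mathrm{Supp}(D_1)$, each prime component of $D_2$ must be one of the $Z_i$, so $D_2=\sum_{j\in J}Z_j$ for some index subset $J$. Applying the multiplicity bound to each $Z_j$ with $j\in J$ yields $m_j\ge k$, and therefore
\[
D_1-kD_2\;=\;\sum_{j\in J}(m_j-k)Z_j+\sum_{i\notin J}m_i Z_i\;\ge\;0
\]
as a divisor on $U$. By hypothesis this effective divisor is rationally equivalent to zero, so there exists $f\in K(U)^{\times}=K(Y)^{\times}$ with $\mathrm{div}_U(f)=D_1-kD_2$. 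Because $\mathrm{codim}(Y\setminus U,Y)\ge 2$, no prime divisor of $Y$ is contained in $Y\setminus U$, and consequently $\mathrm{div}_Y(f)$ coincides with the closure of $D_1-kD_2$ on $Y$ and remains effective there. Thus $f$ pulls back to a global regular function on the normalization $\widetilde Y$, which is still projective and irreducible, forcing $f$ to be constant. This forces $D_1-kD_2=0$ on $U$, whence $D_1=kD_2$ and $\mathrm{Supp}(D_1)=\mathrm{Supp}(D_2)$.

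The main technical point is the multiplicity bound itself. The cleanest route is to localize $\varphi$ at the generic point $\eta_j$ of $Z_j$: in the DVR case one puts $\varphi$ in Smith normal form $\mathrm{diag}(\pi^{a_1},\ldots,\pi^{a_r})$ over $\mathcal{O}_{U,\eta_j}$, and observes that the corank-$k$ hypothesis on the reduction $\varphi\bmod \pi$ forces at least $k$ of the exponents $a_i$ to be positive, giving $\mathrm{ord}_{Z_j}(\det\varphi)=\sum_i a_i\ge k$. If $U$ happens not to be normal along $Z_j$, one either passes to the normalization of $U$ or argues via the appropriate Fitting ideal of $\mathrm{coker}(\varphi)$. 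The remaining manipulations are formal consequences of the codimension-two hypothesis on $Y\setminus U$.
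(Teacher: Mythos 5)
Your first half is the same as the paper's: reduce to the corank multiplicity bound (which the paper states just before the lemma) to conclude that every component of the reduced divisor $D_2$ enters $D_1$ with multiplicity at least $k$, so $D_1-kD_2\ge 0$. Your Smith-normal-form justification of that bound at the generic point of $Z_j$ needs $\mathcal{O}_{U,\eta_j}$ to be a DVR, i.e.\ $U$ normal along $Z_j$; you flag this, and it is harmless in the applications. The divergence, and the genuine gap, is in the second half.

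From $\mathrm{div}_U(f)=D_1-kD_2$ and $\mathrm{codim}(Y\setminus U,Y)\ge 2$ you correctly conclude that the cycle $\mathrm{div}_Y(f)$ is the closure of $D_1-kD_2$ and is effective. But the next inference --- that effectivity of $\mathrm{div}_Y(f)$ forces $f$ to pull back to a regular, hence constant, function on the normalization $\widetilde{Y}$ --- is false when $Y$ is not normal, and the lemma assumes only that $Y$ is an irreducible projective variety. The order of $f$ along a prime divisor $V\subset Y$ is the weighted sum $\sum_{W}[K(W):K(V)]\,\mathrm{ord}_W(f)$ over the prime divisors $W\subset\widetilde{Y}$ lying over $V$, so poles on one branch can cancel against zeros on another. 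Concretely, for a projective nodal cubic $Y$ with normalization $\nu\colon\mathbb{P}^1\to Y$ sending $0$ and $\infty$ to the node, the function $f=t$ has $\mathrm{div}_Y(f)=0$, which is effective, yet $f$ is neither constant nor regular on $\mathbb{P}^1$; so both ``effective on $Y$ implies regular on $\widetilde{Y}$'' and ``effective on $Y$ implies $f$ constant'' fail. (In that example the desired conclusion $D_1-kD_2=0$ is not contradicted, but your derivation of it breaks.) The paper closes this step by a degree argument that needs no normality: since there are no divisors supported on $Y\setminus U$, the class of the closure $\overline{D_1-kD_2}$ already vanishes in $CH^1(Y)$, hence its degree against $H^{\dim Y-1}$ is zero for $H$ ample, whereas a nonzero effective divisor on a projective variety has positive such degree. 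Alternatively, if you add the hypothesis that $Y$ is normal (true in every application in the paper, where the ambient space is smooth), then $\mathrm{div}(f)\ge 0$ does force $f$ to be constant and your argument goes through as written.
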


\proof
The hypotheses imply that the only effective divisor $D$ on $U$ whose rational class is zero is the zero--divisor itself. Indeed, if $D\neq 0$, then its  closure $\overline{D}$ in $Y$ satisfies $\overline{D}\cdot H^{\mathrm{dim}(Y)-1}>0$ for any ample divisor $H$ on $Y$, a contradiction. We apply this  to the divisor $D:=D_1-k D_2$, which is  effective, for, as explained,  $D_2$ enters in $D_1$ with multiplicity at least $k$.
\endproof

Throughout this section let $V$ be an $n$-dimensional complex vector space and set $\GG:=\Gr_{2n-3}\bigl(\bigwedge^2 V\bigr)$. Theorem \ref{thm:mainresult} offers a set-theoretic description of the \emph{Koszul divisor}
\[
\mathcal{D}_\mathfrak{Kosz}:=\Bigl\{K\in\GG: W_{n-3}(V,K)\ne 0\Bigr\}.
\]
The fact that $\mathcal{D}_{\mathfrak{Kosz}}$ is a divisor on $\GG$ follows once we observe that  if $\mathcal U$ is the universal rank--$(2n-3)$ subbundle on $\GG$, then $\mathcal D_\mathfrak{Kosz}$
is the degeneracy locus of the morphism

\begin{equation}\label{eq:universal-delta2n-3}
\gamma\colon \mathcal{U}\otimes \Sym^{n-3}V\to \mathcal O_{\GG}\otimes \mathrm{Im}(\delta_{2,n-3}),
\end{equation}
which in the fiber over a point $[K]\in \GG$ is given by the Koszul differential $\delta_{2,n-3}$. Theorem \ref{thm:mainresult} implies that $\gamma$ is non-degenerate; for instance if we write $V=\Sym^{n-1}(U)$, with $U$ being a $2$-dimensional vector space, then we have established in \cite{AFPRW2} that the point $$\Bigl[K:=\Sym^{2n-4} U\subseteq \bigwedge^2 \Sym^{n-1} U\Bigr]\notin \mathcal{D}_\mathfrak{Kosz},$$ and therefore $\gamma$ is non-degenerate and $\mathcal{D}_\mathfrak{Kosz}$ is a genuine divisor on $\GG$.

\vskip 3pt

On the other hand, we can consider the \emph{Cayley--Chow form} of the Grassmannian
$\mathrm{Gr}_2(V^\vee )\subseteq  \PP\bigl(\bigwedge^2 V^\vee \bigr)$. Explicitly, this divisor is the locus
\[
\mathcal{D}_\mathfrak{Res}:=\Bigl\{K\in \GG: \PP(K^\perp)\cap \mathrm{Gr}_2(V^\vee )\ne\emptyset\Bigr\}
\]
and comes with an induced scheme structure. Theorem \ref{thm:mainresult} (see Theorem 1.3 from \cite{AFPRW1} for a version in positive characteristic) can then be formulated as a set-theoretic equality:
\begin{equation}\label{egyenloseg}
\mathrm{Supp}(\mathcal{D}_\mathfrak{Res})=\mathrm{Supp}(\mathcal{D}_\mathfrak{Kosz}).
\end{equation}

The divisor classes of $\mathcal{D}_\mathfrak{Res}$ and $\mathcal{D}_\mathfrak{Kosz}$
are easy to describe in terms of the generator $\mathcal L = \det(\mc{U}^{\vee})$
of the Picard group  $\mathrm{Pic}(\GG)$, which is the hyperplane section bundle coming from the Pl\"ucker embedding of $\GG$.
It follows from (\ref{eq:universal-delta2n-3}) that the degree of $\mathcal{D}_{\mathfrak{Kosz}}$ equals the dimension of $\Sym^{n-3}(V)$, which proves that the divisor class of $\mathcal{D}_\mathfrak{Kosz}$ equals

\begin{equation}
\label{eq:class-D1}
[\mathcal{D}_\mathfrak{Kosz}]={2n-4 \choose n-1}[\mathcal{L}].
\end{equation}

To compute the class of $\mathcal{D}_\mathfrak{Res}$ we recall that the degree of a Cayley--Chow form equals the degree of the variety to which it is associated \cite[Corollary 2.1]{DS}, which in our case is equal to the \emph{Catalan number} $C_{n-2}=\frac{1}{n-1}{2n-4 \choose n-2}$, see  \cite[Proposition~4.12]{EH16}. Hence, we have that the divisor classes of $\mathcal{D}_\mathfrak{Kosz}$ and $\mathcal{D}_\mathfrak{Res}$ are related by
\begin{equation}
\label{eq:class-D2}
[\mathcal{D}_\mathfrak{Kosz}]=(n-2)[\mathcal{D}_\mathfrak{Res}].
\end{equation}

\begin{lem}
\label{lem:incl}
We have a set-theoretical inclusion $\mathrm{Supp}(\mathcal{D}_\mathfrak{Res})\subseteq\mathrm{Supp}(\mathcal{D}_\mathfrak{Kosz})$.
\end{lem}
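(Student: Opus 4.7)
The plan is to reduce to a maximally degenerate choice of $K$ and then compute $W_{n-3}$ by hand. Given $[K]\in \mathrm{Supp}(\mathcal{D}_\mathfrak{Res})$, pick linearly independent $a,b\in V^\vee$ with $a\wedge b\in K^\perp\setminus\{0\}$, set $U:=\langle a,b\rangle\subseteq V^\vee$, and let $\tilde V := V/U^\perp$, a two-dimensional quotient of $V$. Form the codimension-one subspace
\[
K' := \ker\bigl(\bw^2 V \twoheadrightarrow \bw^2 \tilde V\bigr) \subseteq \bw^2 V.
\]
Under the identification $\tilde V^\vee = U$ one has $(K')^\perp = \mathbb{C}\cdot(a\wedge b)\subseteq \bw^2 V^\vee$, so the hypothesis $a\wedge b\in K^\perp$ translates into the inclusion $K\subseteq K'$. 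The monotonicity of the Koszul module construction in $K$, recorded in (\ref{eq:wnat}), then yields a surjection $W_{n-3}(V,K)\twoheadrightarrow W_{n-3}(V,K')$, and the claim is reduced to showing $W_{n-3}(V,K')\neq 0$.

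For the reduced problem I would compute $W_q(V,K')$ for all $q$ directly from the definition. Since $\bw^2 V/K'\cong \bw^2\tilde V\cong \mathbb{C}$, the presentation (\ref{eqnWqDef}) collapses to the cokernel of a single linear map $\bw^3 V\oo\Sym^{q-1}V\to \Sym^q V$, namely $\delta_3$ followed by the one-dimensional projection $\bw^2 V\twoheadrightarrow \mathbb C$. Choosing a basis $x_1,\ldots,x_n$ of $V$ with $x_1,x_2$ lifting a basis of $\tilde V$ and $x_3,\ldots,x_n$ a basis of $U^\perp$, a term-by-term inspection of $\delta_3$ shows that only basis vectors of the form $x_1\wedge x_2\wedge x_\ell\oo m$ with $\ell\geq 3$ survive the projection, each producing the monomial $x_\ell\cdot m$. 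Hence the image of the map is precisely the degree-$q$ part of the ideal $(x_3,\ldots,x_n)\subseteq \Sym V$, and so
\[
W_q(V,K')\;\cong\;\Sym^q V/(x_3,\ldots,x_n)\;=\;\Sym^q\tilde V,
\]
which is of dimension $q+1$. Specializing to $q=n-3$ gives $\dim W_{n-3}(V,K')=n-2>0$, completing the argument.

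I do not foresee any real obstacle. The only mildly subtle step is the identification of $(K')^\perp$ with $\mathbb{C}\cdot(a\wedge b)$, which is needed to secure the inclusion $K\subseteq K'$; this is immediate from the finite-dimensional duality $(V/U^\perp)^\vee = U$. A pleasant byproduct is that the dimension $n-2$ emerging from the explicit computation matches exactly the multiplicity appearing in (\ref{eq:class-D2}), which both sanity-checks the calculation and suggests that a refinement of the same argument should compute the kernel dimension of the map $\gamma$ of (\ref{eq:universal-delta2n-3}) at a generic point of $\mathcal{D}_\mathfrak{Res}$, thereby feeding directly into the application of Lemma \ref{lem:divisorial}.
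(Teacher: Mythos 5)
Your proof is correct, but it follows a different route from the paper's own proof of this lemma. The paper argues softly: for $[K]\in\mathcal{D}_\mathfrak{Res}$ the module $W(V,K)$ has infinite length (by the support description of the Koszul module, cf.\ (\ref{eq:mainequiv})), and since it is generated in degree zero, a vanishing graded piece would force all higher pieces to vanish; hence $W_{n-3}(V,K)\neq 0$. What you do instead --- enlarge $K$ to the codimension-one subspace $K'$ with $(K')^{\perp}=\mathbb{C}\cdot(a\wedge b)$, invoke the surjection (\ref{eq:wnat}), and compute $W_q(V,K')\cong\Sym^q\tilde{V}$ of dimension $q+1$ --- is precisely the paper's proof of the \emph{next} statement, Lemma \ref{lem:graphic}, which records the quantitative bound $\dim W_q(V,K)\geq q+1$. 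So your argument is sound (the identification of the image of the projected Koszul differential with the degree-$q$ part of $(x_3,\ldots,x_n)$ checks out, and the duality step giving $K\subseteq K'$ is fine), and it even buys more than the lemma asks: it bypasses Theorem \ref{thm:mainresult} entirely and simultaneously yields the multiplicity $n-2$ at points of $\mathcal{D}_\mathfrak{Res}$ that the paper later feeds into Lemma \ref{lem:divisorial} to prove Theorem \ref{thm=GreenToKoszul}. The trade-off is that the paper's proof of this particular inclusion is a two-line consequence of general structure (generation in degree zero plus the support/resonance equivalence), while yours duplicates the explicit Hilbert-series computation that the paper reserves for Lemma \ref{lem:graphic}.
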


\proof
Let $[K]\in \mathcal D_\mathfrak{Res}$. By Theorem \ref{thm:mainresult}, the Koszul module $W(V,K)$ is of infinite length.
Since it is generated in degree zero, it follows that $W_q(V,K)\ne 0$ for all $q\ge 0$, and in particular $W_{n-3}(V,K)$ is also non-zero.
\endproof

\begin{lem}
\label{lem:graphic}
For any $[K]\in \mathcal{D}_\mathfrak{Res}$ we have $\mathrm{dim}\ W_q(V,K)\ge q+1$, for all $q\ge 0$.
\end{lem}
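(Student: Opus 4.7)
The hypothesis $[K]\in\mathcal D_\mathfrak{Res}$ supplies linearly independent $a,b\in V^\vee$ with $\omega:=a\wedge b\in K^\perp\setminus\{0\}$. The plan is to compress this datum into an explicit quotient of $W(V,K)$ whose graded pieces are easy to compute and of dimension exactly $q+1$; the desired inequality then follows at once.

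Set $\overline{V}:=V/\langle a,b\rangle^{\perp}$, a two-dimensional quotient of $V$, and let $K':=\{x\in\bigwedge^2V:\omega(x)=0\}\subseteq\bigwedge^2V$ be the hyperplane arising as the kernel of the natural surjection $\bigwedge^2V\twoheadrightarrow\bigwedge^2\overline{V}$. The isotropy condition $\omega\in K^\perp$ is exactly the inclusion $K\subseteq K'$, so the functoriality \eqref{eq:wnat} produces a surjection $W(V,K)\twoheadrightarrow W(V,K')$ of graded $S$-modules. It therefore suffices to prove $\dim W_q(V,K')=q+1$ for every $q\ge 0$.

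For this, I will use \eqref{eqnWqDef} to realise $W_q(V,K')$ as the cokernel of the composite
\[
\bigwedge^3V\otimes\Sym^{q-1}V \xrightarrow{\delta_{3,q-1}} \bigwedge^2V\otimes\Sym^qV \twoheadrightarrow \bigwedge^2\overline{V}\otimes\Sym^qV \cong \Sym^qV,
\]
where the last identification comes from a chosen generator of the line $\bigwedge^2\overline{V}$. Fixing a basis $e_1,\ldots,e_n$ of $V$ with $\overline{V}=\langle\overline{e}_1,\overline{e}_2\rangle$ and $\overline{e}_l=0$ for $l\ge 3$, a direct inspection of the three terms in $\delta_{3,q-1}(e_i\wedge e_j\wedge e_k\otimes f)$ shows that, after projection to $\bigwedge^2\overline{V}$, only the basis vectors of the form $e_1\wedge e_2\wedge e_l$ with $l\ge 3$ contribute, and each is sent to $e_l\cdot f\in\Sym^qV$. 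The cokernel is therefore $\Sym^qV/(e_3,\ldots,e_n)\cdot\Sym^{q-1}V\cong\Sym^q\overline{V}$, of dimension $q+1$.

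The only conceptual input is the functoriality \eqref{eq:wnat}; everything else reduces to the short Koszul computation above, and I foresee no real obstacle.
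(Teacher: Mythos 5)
Your argument is correct and is essentially the same as the paper's: both pass to the codimension-one subspace $K'=\ker(a\wedge b)\supseteq K$, use the functorial surjection \eqref{eq:wnat} onto $W(V,K')$, and check that the Hilbert series of $W(V,K')$ is $\sum_{q\ge 0}(q+1)t^q$. The only difference is that you spell out the Koszul computation (identifying $W(V,K')\cong \Sym\overline{V}$) that the paper calls ``a direct calculation,'' and that computation is right.
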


\proof
For $[K]\in \mathcal{D}_\mathfrak{Res}$, it follows from \eqref{eq:defr} and \eqref{eq:mainequiv} that we may find a basis $\{v_1,\ldots,v_n\}$ of $V$ such that $v_1^\vee \wedge v_2^\vee \in K^\perp$. We get that $K\subseteq K'$, where $K'\subseteq \bigwedge ^2V$ is the codimension one subspace with basis $v_i\wedge v_j$ with $1\le i<j\le n$ and $(i,j)\ne (1,2)$. A direct calculation shows that the Hilbert series of $W(V,K')$ equals $\sum_{q\ge 0}(q+1)t^q$, while \eqref{eq:wnat} proves that the graded module $W(V,K')$ is a quotient of $W(V,K)$, concluding our proof.
\endproof

\vskip 3pt

The following result is a refinement of Theorem \ref{thm:mainresult} and provides an explicit description, including multiplicities, of the Chow form of the Grassmannian $\Gr_2(V^{\vee})$ in its Pl\"ucker embedding.

\begin{thm}\label{thm=GreenToKoszul}
One has the following equality of divisors on $\GG$
$$\mathcal{D}_\mathfrak{Kosz}=(n-2)\cdot \mathcal{D}_\mathfrak{Res}.$$
\end{thm}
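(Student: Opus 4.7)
The plan is to apply the general degeneracy lemma (Lemma \ref{lem:divisorial}) to the universal morphism $\gamma$ from \eqref{eq:universal-delta2n-3}, taking $Y=U=\GG$, $\varphi=\gamma$, $D_1=\mathcal{D}_\mathfrak{Kosz}$, $D_2=\mathcal{D}_\mathfrak{Res}$, and $k=n-2$. Most hypotheses are already available in the preceding paragraphs; the only real work is to verify the pointwise multiplicity condition on $\ker(\gamma)$.

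To set up the lemma I would first observe the following. The source and target of $\gamma$ have the same rank, and $D_1$ is a genuine Cartier divisor, by the Weyman specialization $K = \Sym^{2n-4}U \subset \bw^2\Sym^{n-1}U$ discussed after \eqref{eq:universal-delta2n-3}. The class identity $[D_1]=(n-2)[D_2]$ is precisely the combination of \eqref{eq:class-D1} and \eqref{eq:class-D2}, and the containment $\mathrm{Supp}(D_2)\subseteq\mathrm{Supp}(D_1)$ is Lemma \ref{lem:incl}. Finally, $\mathcal{D}_\mathfrak{Res}$ is reduced: as the Cayley--Chow divisor of the irreducible variety $\Gr_2(V^\vee)\subseteq\PP(\bw^2V^\vee)$, it is cut out by a single irreducible polynomial in Plücker coordinates.

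The heart of the proof is the pointwise bound $\dim\ker(\gamma([K]))\ge n-2$ for every $[K]\in\mathcal{D}_\mathfrak{Res}$. Since the source and target of $\gamma$ have equal rank, at every point $[K]\in\GG$ one has $\dim\ker(\gamma([K]))=\dim\coker(\gamma([K]))$. Using exactness of the Koszul complex, the target of $\gamma$ is canonically $\ker(\delta_{1,n-2})=\mathrm{Im}(\delta_{2,n-3})$, so the cokernel of $\gamma([K])$ is exactly $W_{n-3}(V,K)$. Applying Lemma \ref{lem:graphic} with $q=n-3$ yields $\dim W_{n-3}(V,K)\ge n-2$ for every $[K]\in\mathcal{D}_\mathfrak{Res}$, which is the bound required. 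With every hypothesis of Lemma \ref{lem:divisorial} checked, the equality of divisors $\mathcal{D}_\mathfrak{Kosz}=(n-2)\cdot\mathcal{D}_\mathfrak{Res}$ follows at once.

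The main obstacle in this argument is supplying the correct multiplicity lower bound at every point of $\mathcal{D}_\mathfrak{Res}$, as opposed to just at a generic point; this is what Lemma \ref{lem:graphic} provides by embedding an arbitrary $K\in\mathcal{D}_\mathfrak{Res}$ into a codimension-one subspace $K'\subset\bw^2V$ with explicitly computable Hilbert series $\sum_{q\ge 0}(q+1)t^q$, and then using the surjection \eqref{eq:wnat} from $W(V,K)$ onto $W(V,K')$.
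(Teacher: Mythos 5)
Your proposal is correct and follows essentially the same route as the paper: apply Lemma \ref{lem:divisorial} to the universal morphism (\ref{eq:universal-delta2n-3}) with $D_1=\mathcal{D}_\mathfrak{Kosz}$, $D_2=\mathcal{D}_\mathfrak{Res}$, $k=n-2$, using Lemma \ref{lem:incl} for the support inclusion, the class identities (\ref{eq:class-D1})--(\ref{eq:class-D2}), and Lemma \ref{lem:graphic} at $q=n-3$ for the pointwise multiplicity bound. Your extra remarks (equal ranks so $\dim\ker=\dim\coker$ of the fiber map, reducedness of the Chow divisor) just make explicit details the paper leaves implicit, which also dispatches the trivial case $n=3$ in one sentence.
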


\proof
If $n=3$ then $m=3$ and therefore $K=\bw^2 V$, which implies that $W(V,K)=0$. Assume from now on $n\geq 4$ and
we take $D_1:=\mc{D}_\mathfrak{Kosz}$ and $D_2:=\mc{D}_\mathfrak{Res}$, for which we apply Lemma~\ref{lem:divisorial}: we know by Lemma~\ref{lem:incl} that $\mathrm{Supp}(D_2)\subseteq\mathrm{Supp}(D_1)$, while (\ref{eq:class-D2}) shows that $[D_1] = (n-2)\cdot[D_2]$; by Lemma~\ref{lem:graphic} with $q=n-3$, it follows that over the point $[K]\in\textrm{Supp}(D_2)$ the fiber of the map (\ref{eq:universal-delta2n-3}) has cokernel $W_{n-3}(V,K)$ of dimension at least $q+1=n-2$, so Lemma~\ref{lem:incl} applies with $k=n-2$ showing the equality of divisors $D_1=(n-2)D_2$, as desired.
\endproof

\begin{rmk}
One has two remarkable equalities of divisors, namely  $\mc{D}_{\mathfrak{Kosz}}=(n-2)\cdot\mc{D}_{\mathfrak{Res}}$ on $\GG$, respectively the equality
$\mathfrak{Syz}=(n-2)\cdot \mathcal{M}_{2n-3,n-1}^1$ on the moduli space $\mathcal{M}_{2n-3}$ of curves of genus $2n-3$, where $\mathcal{M}_{2n-3,n-1}^1$ is the $(n-1)$-gonal locus, whereas
$$\mathfrak{Syz}:=\bigl\{[C]\in \mathcal{M}_{2n-3}: K_{n-2,1}(C,\omega_C)\neq 0\bigr\}$$
is the locus of curves with a non-trivial $(n-2)$nd syzygy in their canonical embedding.

It would be highly interesting to establish a  direct geometric connection between these equalities and also explain the occurrence of the same multiplicity $n-2$.
\end{rmk}

\subsection{The resonance divisor of a skew-symmetric degeneracy locus.} We present now an application of Theorem \ref{thm=GreenToKoszul} to a situation appearing frequently in moduli theory. Assume we are given two vector bundles $\E$ and $\F$ over a stack $X$ such that $\mbox{rk}(\E)=e$ and $\mbox{rk}(\F)=2e-3$ where $e\geq 3$, and a generically surjective morphism of vector bundles
$$\phi\colon \bigwedge^2 \E\rightarrow \F.$$

Identifying the Grassmannian $\Gr_2\bigl(\E(x)\bigr)\subseteq \PP\bigl(\bigwedge^2 \E(x)\bigr)$ of lines in the fibre $\E(x)$ over a point $x\in X$ with the (projectivization of the) space of rank  $2$ exterior tensors on $\E(x)$, the numerical conditions at hand imply that the locus
$$\mathfrak{Res}(\phi):=\Bigl\{x\in X: \exists \ 0\neq s_1\wedge s_2\in \bigwedge^2 \E(x): \phi(s_1\wedge s_2)=0\Bigr\}$$
is a virtual divisor on $X$. We assign a divisor structure to this locus as follows.

Let $\Sigma$ be the variety consisting of pairs $(\varphi, K)$, where $\varphi\in \mbox{Hom}\bigl(\bigwedge ^2 \mathbb C^e, \mathbb C^{2e-3}\bigr)$, and $K\subseteq \mbox{Ker}(\varphi)$ is a subspace of codimension $2e-3$. For a morphism of vector bundles $\phi\colon \bigwedge^2 \E\rightarrow \F$ as above, over a trivializing open set $U\subseteq X$ consider the fibre product
$$\Sigma(\phi):=U\times_{\mathrm{Hom}\bigl(\bigwedge^2 \mathbb C^e, \mathbb C^{2e-3}\bigr)} \Sigma$$ endowed with the projections $\pi_1\colon \Sigma(\phi)\rightarrow X$ and $\pi_2\colon \Sigma(\phi)\rightarrow \Gr_{2e-3}\bigl(\bigwedge^2 (\mathbb C^e)^{\vee}\bigr)$.

\vskip 3pt

\begin{defn}\label{def:resdiv}
We define the virtual divisor  $\mathfrak{Res}(\phi)$ of the morphism $\phi\colon \bigwedge^2 \E\rightarrow \F$ locally over a trivializing open set $U$ as $\mathfrak{Res}(\phi)_{|U}:=(\pi_1)_*\bigl(\pi_2^*\mathcal{D}_{\mathfrak{Res}}\bigr)=
\frac{1}{e-2}(\pi_1)_*\bigl(\pi_2^*\mathcal{D}_{\mathfrak{Kosz}}\bigr)$.
\end{defn}

We can now prove Theorem \ref{thm:alt}, which provides a formula for the class of this locus in terms of the first Chern classes of $\E$ and $\F$:

\vskip 3pt

\emph{Proof of Theorem \ref{thm:alt}.}
We may assume $e\geq 4$ and consider the chain of morphisms
$$\E\otimes \Sym^{e-2}(\E)/\Sym^{e-1}(\E)\stackrel{\delta_{2,e-3}^{\vee}}\longrightarrow \bigwedge^2 \E\otimes \Sym^{e-3}(\E)\stackrel{\phi\otimes \mathrm{id}}\longrightarrow \F\otimes \Sym^{e-3}(\E),$$
and denote by $\vartheta\colon \E\otimes \Sym^{e-2}(\E)/\Sym^{e-1}(\E)\rightarrow \F\otimes \Sym^{n-3}(\E)$ the composition. Applying Theorems \ref{thm:mainresult} and \ref{thm=GreenToKoszul}, we infer that $(e-2)\cdot \mathfrak{Res}(\phi)$ is equal as a divisor to the degeneracy locus of the morphism $\vartheta$. Using the formula $c_1(\Sym^n \E)={e+n-1 \choose e} c_1(\E)$ valid for all $n\geq 0$, we compute
$$(e-2)\cdot[\mathfrak{Res}(\phi)]=c_1\bigl(\F\otimes \Sym^{e-3}(\E)\bigr)-c_1\bigl(\E\otimes \Sym^{e-2}(\E))+c_1\bigl(\Sym^{e-1}\E)=$$
$${2e-4\choose e-3}c_1(\F)+
(2e-3){2e-4\choose e-4}c_1(\E)-{2e-3\choose e-2} c_1(\E)-e{2e-3\choose e-3}c_1(\E)$$
$$+{2e-2\choose e-2}c_1(\E)={2e-4\choose e-3}\Bigl(c_1(\F)-\frac{4e-6}{e}c_1(\E)\Bigr),$$
which immediately leads to the claimed formula.
\hfill
$\Box$

\section{Koszul modules associated to vector bundles}
\label{subsec:detmaps}

We now discuss a class of Koszul modules naturally associated to vector bundles.
For a vector bundle $E$ on a projective variety $X$, we consider the determinant map
$$
d\colon \bigwedge^2 H^0(X,E)\to H^0\bigl(X,\bw^2 E\bigr)
.$$

\begin{defn}\label{ex:W(X,E)}
The Koszul module associated to the pair $(X, E)$ as above is defined as
\[
W(X,E):=W(V,K),\mbox{ where }V:=H^0(X,E)^\vee \mbox{ and } K=\ker(d)^\perp=\mathrm{Im}(d)^\vee\subseteq \bigwedge^2V.
\]
\end{defn}

The triviality of the resonance variety $\mathcal{R}(X,E)$ associated to the Koszul module $W(X, E)$ has a transparent geometric interpretation.
\begin{prop}
One has  $\mathcal R(X,E)=\{0\}$ if and only if $E$ has no locally free subsheaf of rank one $L$ with $h^0(X,L)\ge 2$.
\end{prop}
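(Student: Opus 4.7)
My plan is to translate the resonance condition into a concrete statement about sections of $E$ and then identify it with the existence of a line subbundle carrying a pencil.

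With $V^{\vee}=H^0(X,E)$ and $K^{\perp}=\ker(d)\subseteq \bigwedge^2 H^0(X,E)$, Definition~\ref{def:resonance} rephrases $\mathcal{R}(X,E)\neq\{0\}$ as follows: there exist two linearly independent sections $s_1,s_2\in H^0(X,E)$ with $s_1\wedge s_2\in \ker(d)$, equivalently whose image $d(s_1\wedge s_2)\in H^0(X,\bigwedge^2 E)$ is the zero section. Since the value of $d(s_1\wedge s_2)$ at a point $x\in X$ is $s_1(x)\wedge s_2(x)\in \bigwedge^2 E(x)$, this condition says exactly that $s_1(x)$ and $s_2(x)$ are linearly dependent in $E(x)$ at every point of $X$.

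For the ``if'' direction, if $L\subseteq E$ is a line subbundle with $h^0(X,L)\geq 2$, I would choose two linearly independent sections $s_1,s_2\in H^0(X,L)\subseteq H^0(X,E)$; since $\bigwedge^2 L=0$, the wedge $s_1\wedge s_2$ vanishes already in $H^0(X,\bigwedge^2 E)$, exhibiting a nonzero element of $\mathcal{R}(X,E)$. For the converse, given such $s_1,s_2$ that are everywhere proportional, I would form the morphism $(s_1,s_2)\colon \mathcal O_X^{\oplus 2}\to E$ and take $L\subseteq E$ to be the saturation of its image: $L$ has generic rank one, $E/L$ is torsion-free by construction, and both $s_1$ and $s_2$ factor through $L$, yielding $h^0(X,L)\geq 2$.

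The only subtle point is to verify that the saturation $L$ is genuinely locally free of rank one rather than merely a torsion-free (equivalently reflexive) rank-one subsheaf of $E$. On a smooth projective $X$ this is automatic, since any rank-one reflexive sheaf on a smooth variety is invertible, and I expect the proposition is stated with this smoothness hypothesis tacitly in force; for more singular varieties one would need to interpret ``locally free subsheaf of rank one'' correspondingly (or replace $L$ by its double dual on the smooth locus and extend).
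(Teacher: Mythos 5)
Your proof is correct and follows essentially the same route as the paper: translate non-trivial resonance into a pair of sections $s_1,s_2$ with $0\neq s_1\wedge s_2\in\ker(d)$, then pass from the rank-one subsheaf they generate to a locally free one (the paper takes the double dual where you take the saturation). Your closing caveat is apt, since the paper's one-line argument tacitly relies on the same fact that rank-one reflexive sheaves on a smooth (locally factorial) variety are invertible.
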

\begin{proof}
Indeed, via (\ref{eq:mainequiv}), the resonance $\mathcal R(X,E)$ is non-trivial if and only if we can find sections $s_1, s_2\in H^0(X,E)$ with $0\neq s_1\wedge s_2\in K^{\perp} = \ker(d)$, which in turn is equivalent to the fact that $s_1$ and $s_2$ generate a rank-one subsheaf whose double dual is a locally free subsheaf of $E$.
\end{proof}

If the vector bundle $E$ in Definition \ref{ex:W(X,E)} is globally generated, then the corresponding Koszul module can be given a geometric description in terms of kernel bundles:

\begin{thm}
\label{thm:det}
Let $X$ be a projective variety and let $E$ be a globally generated vector bundle on $X$ such that the determinant map
\[
d \colon \bigwedge^2H^0(X,E)\to H^0\bigl(X,\bigwedge^2E\bigr)
\]
is not identically zero. If we denote by $M_E$ the kernel of the evaluation map
\begin{equation}\label{eq:eval-E}
H^0(X,E)\otimes \mathcal O_X\to E,
\end{equation}
then we have an isomorphism
\[
W_q(X,E)^\vee \cong\mathrm{Ker}\Bigl\{H^1(X,\Sym^{q+2}M_E)\to \Sym^{q+2}H^0(X,E)\otimes H^1(X,\mathcal O_X)\Bigr\}.
\]
In particular, if $H^1(X,\mathcal O_X)=0$, then $W_q(X,E)^\vee \cong H^1\bigl(X,\Sym^{q+2}M_E\bigr)$.
\end{thm}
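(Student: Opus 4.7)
My strategy is to build a resolution of $\Sym^{q+2} M_E$ whose global sections recover the complex computing $W_q(X,E)$. The key input is the standard Eagon--Northcott/Koszul-type exact complex, available in characteristic zero, associated to the defining short exact sequence $0 \to M_E \to V^\vee \oo \mc{O}_X \to E \to 0$, where $V^\vee := H^0(X,E)$:
$$0 \to \Sym^{q+2} M_E \to \Sym^{q+2} V^\vee \oo \mc{O}_X \to \Sym^{q+1} V^\vee \oo E \to \Sym^q V^\vee \oo \bw^2 E \to \cdots \to \bw^{q+2} E \to 0.$$
Its differentials are the natural ``polarize-then-project'' maps induced by the evaluation $V^\vee \oo \mc{O}_X \twoheadrightarrow E$; exactness can be checked locally after choosing a splitting of the defining sequence, where the calculation reduces to the classical Koszul complex in characteristic zero.

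First I truncate the resolution into the short exact sequence
$$0 \to \Sym^{q+2} M_E \to \Sym^{q+2} V^\vee \oo \mc{O}_X \to \mc{K} \to 0,$$
with $\mc{K}:=\ker\bigl(\Sym^{q+1} V^\vee \oo E \to \Sym^q V^\vee \oo \bw^2 E\bigr)$. The associated long exact sequence in cohomology yields
$$\Sym^{q+2} V^\vee \xrightarrow{\gamma} H^0(X, \mc{K}) \to H^1(X, \Sym^{q+2} M_E) \to \Sym^{q+2} V^\vee \oo H^1(X, \mc{O}_X),$$
so that the kernel of the last arrow equals $\coker(\gamma)$. Since taking cohomology is left exact and $H^0(X,E) = V^\vee$, the group $H^0(X, \mc{K})$ is the kernel of
$$\alpha\colon V^\vee \oo \Sym^{q+1} V^\vee \lra H^0\bigl(X, \bw^2 E\bigr) \oo \Sym^q V^\vee.$$

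Unwinding the formula for the Koszul-type differentials, one identifies $\gamma$ with the dual Koszul differential $\delta_{1,q+1}^\vee\colon \Sym^{q+2} V^\vee \to V^\vee \oo \Sym^{q+1} V^\vee$, and $\alpha$ with the composition $(d \oo \id) \circ \delta_{2,q}^\vee$, where $d$ is the determinant map of Definition \ref{ex:W(X,E)}. Because $K^\perp = \ker(d)$ and $\Sym^q V^\vee$ is flat, $\ker(\alpha)$ coincides with the kernel of $(\delta_{2,q}|_K)^\vee\colon V^\vee \oo \Sym^{q+1} V^\vee \to K^\vee \oo \Sym^q V^\vee$. Therefore $\coker(\gamma)$ is the middle cohomology of the dual of the three-term complex \eqref{eqn:W}, which by definition is $W_q(V,K)^\vee = W_q(X,E)^\vee$; the final assertion of the theorem is immediate upon setting $H^1(X, \mc{O}_X) = 0$. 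The principal obstacle is the very first step: pinning down the Eagon--Northcott/Koszul complex for the surjection $V^\vee \oo \mc{O}_X \twoheadrightarrow E$ and verifying that, on global sections, its differentials agree with the Koszul differentials $\delta_{p,q}$ from (\ref{eq:delta-pq}); once this bookkeeping is carried out, the remaining steps reduce to diagram chasing.
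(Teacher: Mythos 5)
Your proposal is correct and takes essentially the same route as the paper: both arguments rest on the symmetric power of the evaluation sequence as a resolution of $\Sym^{q+2}M_E$, on the observation that enlarging $\mathrm{Im}(d)$ to $H^0\bigl(X,\bw^2 E\bigr)$ does not change the middle cohomology, and on identifying $W_q(X,E)^\vee$ with the first cohomology of the global-sections complex of that resolution. The only difference is one of packaging: where you truncate the resolution to a short exact sequence and run the ordinary long exact sequence, the paper invokes the hypercohomology spectral sequence, whose low-degree exact sequence is precisely what your truncation argument reproduces by hand.
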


\proof
Based on (\ref{eqn:W}), we know that $W_q(X,E)$ is the middle cohomology of the complex
\[
K\otimes \Sym^qV\stackrel{\delta_{2,q}}{\longrightarrow} V\oo\Sym^{q+1}V \overset{\delta_{1,q+1}}{\lra} \Sym^{q+2}V
\]
where $V=H^0(X,E)^{\vee}$ and $K = \ker(d)^{\perp}$. Dualizing this complex and replacing $K^{\vee}=\im(d)$ by the ambient space $H^0(X,\bw^2 E)$ (which does not affect the middle cohomology), we realize $W_q(X,E)^{\vee}$ as the middle cohomology of a complex
\[
 \Sym^{q+2}H^0(X,E) \lra H^0(X,E)\oo\Sym^{q+1}H^0(X,E) \lra H^0\bigl(X,\bw^2 E\bigr)\oo\Sym^{q}H^0(X,E),
\]
which arises from an alternative construction as follows. Since $M_E$ is resolved by the $2$-term complex (\ref{eq:eval-E}), $\Sym^{q+2}M_E$ is resolved by the $(q+2)$-nd symmetric power of (\ref{eq:eval-E})
\begin{equation}
\label{eq:symPowers}
\Sym^{q+2}H^0(X,E)\otimes \mathcal O_X\to \Sym^{q+1}H^0(X,E)\otimes E\to \Sym^{q} H^0(X,E)\otimes \bigwedge^2E\to \cdots
\end{equation}
and the previous description of $W_q(X,E)^{\vee}$ shows that it coincides with the first cohomology group of the complex obtained from (\ref{eq:symPowers}) by taking global sections.

\vskip 3pt

Since (\ref{eq:symPowers}) resolves $\Sym^{q+2}M_E$, its hypercohomology coincides with the sheaf cohomology of $\Sym^{q+2}M_E$, so we get a spectral sequence
\[E_2^{i,j} = H^i\Bigl(\Sym^{q+2-\bullet}H^0(X,E) \oo H^j(X,\bw^{\bullet}E)\Bigr) \Longrightarrow H^{i+j}(X,\Sym^{q+2}M_E).\]
Since $E_2^{i,j} = 0$ for $i<0$ or $j<0$, it follows that we have an exact sequence
\[
\xymatrix@C=1.5em{0 \ar[r] & E_2^{1,0} \ar[r] & H^1(X,\Sym^{q+2}M_E) \ar[r] \ar@/_2pc/[rr]_{H^1(X,\iota)} & E_2^{0,1} \Big(\ar@{^{(}->}[r] & \Sym^{q+2}H^0(X,E) \otimes H^1(X,\mathcal O_X)\Big)}
\]
where $\iota$ denotes the natural inclusion of $\Sym^{q+2}M_E$ into $ \Sym^{q+2}H^0(X,E)\oo\mc{O}_X$. Since $E_2^{\bullet,0}$ is the complex obtained from (\ref{eq:symPowers}) by taking global sections, we conclude that $E_2^{1,0}=W_q(X,E)^{\vee}$ is the kernel of $H^1(X,\iota)$, and that it is moreover isomorphic to $H^1(X,\Sym^{q+2}M_E)$ when $H^1(X,\mathcal O_X)=0$, as desired.
\endproof

\subsection{Koszul modules associated to $K3$ surfaces.}\label{section:K3}

An important application of Theorem \ref{thm:det} is provided by Lazarfeld-Mukai bundles on $K3$ surfaces. Let $(X,L)$ be a polarized $K3$ surface of genus $g\geq 2$, where $L$ is an ample line bundle of degree $L^2=2g-2$.  We set $H^{\bullet}(X):=H^0(X,\mathbb Z)\oplus H^2(X, \mathbb Z)\oplus H^4(X, \mathbb Z)$. Following \cite{M1}, we define the \emph{Mukai pairing} on $H^{\bullet}(X)$ by
$$(v_0, v_1, v_2)\cdot (w_0,  w_1, w_2):=v_1\cdot w_1-v_2\cdot w_0-v_0\cdot w_2\in H^4(X, \mathbb Z)\cong \mathbb Z.$$
For a sheaf $E$ on $X$, its \emph{Mukai vector} is defined following  \cite[Definition 2.1]{M1}, by setting
\[v(E):=\Bigl(\rk(E),\det(E),\chi(E)-\rk(E)\Bigr)\in H^{\bullet}(X).\]
Note that we have $-\chi(F,F)=v(F)^2$.  We denote by $M_L(v)$ the moduli space of $S$-equivalence classes of $L$-semistable sheaves $E$ on $X$
and having prescribed Mukai vector $v(E)=v$. Let $M_L^s(v)$ the open subset of $M_L(v)$ corresponding to $L$-stable sheaves. It is known that $M_L^s(v)$ is pure dimensional and $\mbox{dim } M_L^s(v)=v^2+2$. Furthermore, if $v^2=-2$, then $M_L(v)=M_L^s(v)$ consists of a single point.

\vskip 3pt

\begin{defn}
A globally generated vector bundle $E$ on a polarized $K3$ surface $(X,L)$ is said to be a \emph{Lazarsfeld-Mukai} bundle if $\mbox{det}(E)\cong L$ and
$H^1(X,E)=H^2(X,E)=0$.
\end{defn}

The Lazarsfeld-Mukai  bundles were  introduced in \cite{L86}, \cite{Mu}, \cite{M1}.  They  can be constructed by choosing a smooth curve $C\in |L|$ and a linear system $A\in W^{r-1}_{d}(C)$ such that both $A$ and $\omega_C\otimes A^{\vee}$ are globally generated, where $r\geq 2$. The dual Lazarsfeld-Mukai bundle sits in the following exact sequence on $X$
$$0\longrightarrow E^{\vee} \longrightarrow H^0(C,A)\otimes \OO_X\stackrel{\mathrm{ev}}\longrightarrow \iota_*A\longrightarrow 0,$$
where $\iota \colon C\hookrightarrow X$ is the inclusion. Dualizing, we obtain the short exact sequence
\begin{equation}\label{lm_sorozat}
0\longrightarrow H^0(C,A)^{\vee}\otimes \OO_X\longrightarrow E\longrightarrow \omega_C\otimes A^{\vee}\longrightarrow 0.
\end{equation}

Then $E$ is a globally generated $L$-stable bundle with $\mbox{det}(E)\cong L$ and
$$h^0(X,E)=h^0(C,\omega_C\otimes A^{\vee})+h^0(C,A)=g-d+r-1,$$
thus $v(E)=(r,L,g-d+r-1)$. We refer to \cite{L86} for all these properties.

\vskip 3pt

To $(X,L)$ and $E$ as above, we consider the Koszul module of the associated Lazarsfeld-Mukai bundle
$$W(X, E):=W\bigl(H^0(X,E)^{\vee}, K\bigr),$$
where $K^{\perp}$ is the kernel of the determinant map $d\colon \bigwedge^2 H^0(X,E)\rightarrow H^0\bigl(X,\bigwedge^2 E\bigr)$.

\begin{lem}
If $\mathrm{Pic}(X)=\mathbb Z\cdot L$, the Koszul module $W(X,E)$ has vanishing resonance.
\end{lem}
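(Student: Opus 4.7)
The plan is to reduce to the geometric criterion for vanishing resonance and then exploit the rigidity enforced by $\mathrm{Pic}(X) = \mathbb{Z}\cdot L$. By the characterization recorded at the beginning of Section~\ref{subsec:detmaps} (the proposition following Definition~\ref{ex:W(X,E)}), $\mc{R}(X,E) = \{0\}$ is equivalent to the assertion that $E$ admits no rank-one locally free subsheaf $M$ with $h^0(X, M) \geq 2$. I argue by contradiction, supposing such an $M \hookrightarrow E$ exists.

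Because $\mathrm{Pic}(X) = \mathbb{Z}\cdot L$, one may write $M \cong L^{\otimes k}$ for some integer $k$. The ampleness of $L$ gives $h^0(X, L^{\otimes k}) = 0$ for $k<0$ while $h^0(X, \mathcal{O}_X)=1$, so the hypothesis $h^0(X,M) \geq 2$ forces $k\geq 1$. The slopes with respect to $L$ therefore satisfy
\[
\mu_L(M) \;=\; k\, L^2 \;\geq\; 2g-2 \;>\; \frac{2g-2}{r} \;=\; \mu_L(E),
\]
where $r = \mathrm{rk}(E) \geq 2$. This inequality violates the $\mu_L$-stability of the Lazarsfeld--Mukai bundle $E$, producing the desired contradiction.

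The only non-routine input is therefore the $\mu_L$-stability of $E$. For bundles arising from a pencil $A \in W^{r-1}_d(C)$ on $C \in |L|$ via the sequence \eqref{lm_sorozat}, this is classical and goes back to Lazarsfeld \cite{L86}. More generally, under the Picard-rank-one hypothesis every Lazarsfeld--Mukai bundle is $\mu_L$-stable: any potential destabilizing subsheaf has first Chern class a positive integer multiple of the primitive polarization $L$, and the existence of such a subsheaf is incompatible with the vanishings $H^1(X,E) = H^2(X,E) = 0$ imposed by the Lazarsfeld--Mukai definition together with the long exact sequence of the associated quotient. This stability statement is the only genuine hurdle; the remainder of the argument is the slope comparison displayed above.
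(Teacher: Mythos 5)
Your argument is correct, but it takes a different route from the paper's. You reduce, as the paper does, to the non-existence of a rank-one locally free subsheaf $M\subseteq E$ with $h^0(X,M)\geq 2$, and since $\mathrm{Pic}(X)=\mathbb Z\cdot L$ forces $M\cong L^{\otimes k}$ with $k\geq 1$, you kill it by a slope comparison against the $\mu_L$-stability of $E$. The paper instead stays self-contained: from $M\cong L^{\otimes k}$, $k\geq 1$, it deduces $H^0(X,E\otimes L^{\vee})\neq 0$, and then tensors the defining extension (\ref{lm_sorozat}) with $L^{\vee}$; since the quotient restricted to $C$ is $\omega_C\otimes A^{\vee}\otimes L^{\vee}|_C\cong A^{\vee}$ (adjunction gives $L|_C\cong\omega_C$), which has no sections, one gets $H^0(X,E\otimes L^{\vee})=0$, a contradiction. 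So the paper's proof needs only the extension and adjunction, whereas yours outsources the real content to the $L$-stability of the Lazarsfeld--Mukai bundle. That is acceptable within this paper, since stability is asserted there (with reference to \cite{L86}) in the same subsection, and your slope inequality $k(2g-2)\geq 2g-2>(2g-2)/r$ is correct for $g\geq 2$, $r\geq 2$. Be aware, however, that your parenthetical sketch of why stability holds is not accurate as stated: the standard argument under $\mathrm{Pic}(X)=\mathbb Z\cdot L$ uses global generation of $E$ (so that quotients have effective, hence non-negative, determinant) together with $H^0(X,E^{\vee})=H^2(X,E)^{\vee}=0$ by Serre duality, rather than ``the long exact sequence of the associated quotient'' combined with $H^1(X,E)=H^2(X,E)=0$; also note that stability of Lazarsfeld--Mukai bundles genuinely requires the Picard-rank-one (or a genericity) hypothesis, so citing it as an unconditional property would be too strong. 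With the stability input properly attributed, your proof stands; the trade-off is that the paper's version is more elementary and does not depend on that nontrivial external fact.
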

\begin{proof}
Two non-proportional sections $s_1$ and $s_2$ of $E$ such that $d(s_1\wedge s_2)=0$ correspond to a locally free subsheaf of rank one $A'$ of $E$ with $h^0(X,A')\geq 2$. Since $\mbox{Pic}(X)=\ZZ \cdot L$, it follows in particular $H^0(X,E\otimes L^{\vee})=0$. Tensoring the sequence (\ref{lm_sorozat}) with $L^{\vee}$ and taking cohomology we obtain a contradiction.
\end{proof}

Let $E$ be a Lazarsfeld-Mukai bundle with Mukai vector $v(E)=(r,L,s)$. Since  $E$ is globally generated, we consider the kernel bundle $M_{E}$ sitting in the
exact sequence
$$0\longrightarrow M_{E}\longrightarrow H^0(X,E)\otimes \mathcal{O}_X\longrightarrow E\longrightarrow 0.$$
Then $M_E^{\vee}$ has Mukai vector $v(M_E^{\vee})=(s,L,r)$. Then $H^1(X,M_E^{\vee})=H^2(X,M_E^{\vee})=0$, furthermore $M_E^{\vee}$ is globally generated and $H^0(X,M_E^{\vee})\cong H^0(X,E)^{\vee}$. In particular, $M_E^{\vee}$ is also a Lazarsfeld-Mukai bundle.

\vskip 4pt

\noindent {\emph{Proof of Theorem \ref{lm_eredmeny}.}} We start with a Lazarsfeld-Mukai bundle $E$ with Mukai vector $v(E)=(r,L,s)$. Then $M_E^{\vee}$ is also a
Lazarsfeld-Mukai bundle with $v(M_E^{\vee})=(s,L,r)$ which has vanishing resonance. Since $h^0(X,M_E^{\vee})=h^0(X,E)=r+s$, the conclusion follows by applying
Theorem \ref{rk2}.

\hfill $\Box$

If $v(E)=(r,L,s)$, a rather lengthy but elementary calculation with Chern classes shows that the symmetric powers of $E$ have Mukai vector
$$
v\bigl(\Sym^b E\bigr)=\Bigl({r+b-1\choose b}, {r+b-1\choose r}L,
$$
$$
{r+b-1\choose b}\frac{b^2(g-r+s-1)-b(r^2+g-sr-1)+r(r+1)}{r(r+1)} \Bigr)\in H^{\bullet}(X).
$$
When $E$ is a spherical object, that is $v^2(E)=-2$, in which case the moduli space $M_L(v)$ consists only of $E$, then $g=rs$ and the above formula becomes more manageable:
\begin{equation}\label{chern_mukai}
v\bigl(\Sym^b E\bigr)=\left({r+b-1\choose b}, {r+b-1\choose r}L,
{r+b-1\choose b}\frac{b^2s-(b-1)(b+r)}{r}\right).
\end{equation}
In particular, Theorem \ref{lm_eredmeny} shows that a general vector bundle $F\in M_L(v)$, where $v$ is the Mukai vector given by (\ref{chern_mukai}),  satisfies $H^1(X,F)=0$. Theorem \ref{lm_eredmeny} is optimal when Theorem \ref{thm:mainresult} is applied
in the divisorial case. We record this result:

\begin{thm}\label{rk2k3}
Let $(X,L)$ be a $K3$ surface of genus $g=2r\geq 4$ with  $\mathrm{Pic}(X)=\ZZ\cdot  L$.
If $E$ is the unique Lazarsfeld-Mukai bundle with vector $v(E)=(r,L,2)$, then $H^1\bigl(X, \Sym^b E\bigr)=0$ for $b\geq r+1$ and
$$h^1\bigl(X,\Sym^b E\bigr)={r+b-1\choose r+1} \frac{r(r-b+1)}{b} \  \  \mbox{ for } b\leq r.$$
\end{thm}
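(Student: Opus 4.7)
The plan is to handle the two ranges $b\geq r+1$ and $b\leq r$ separately. For $b\geq r+1$, Theorem~\ref{lm_eredmeny} applies directly: with $v(E)=(r,L,2)$ one has $s=2$, so $b\geq r+s-1=r+1$ gives the vanishing $H^1(X,\Sym^b E)=0$. This is also consistent with the closed formula at $b=r+1$, where the factor $r-b+1$ vanishes.

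For $b\leq r$ I would pass to the dual Lazarsfeld-Mukai bundle $F:=M_E^\vee$, of Mukai vector $(2,L,r)$, which is globally generated and has vanishing resonance. Dualizing the defining sequence of $M_E$ produces
\[
0\to E^\vee\to H^0(X,E)^\vee\otimes\mathcal{O}_X\to F\to 0.
\]
Taking global sections and using the $L$-stability of $E$ (to get $H^0(X,E^\vee)=0$) together with Serre duality on the $K3$ surface and $H^1(X,E)=0$ (to get $H^1(X,E^\vee)=0$), one obtains $H^0(X,F)\cong H^0(X,E)^\vee$, of dimension $n:=r+2$, and the identification $M_F\cong E^\vee$. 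Theorem~\ref{thm:det} applied to $F$ now gives
\[
W_q(X,F)^\vee\cong H^1\bigl(X,\Sym^{q+2}M_F\bigr)=H^1\bigl(X,\Sym^{q+2}E^\vee\bigr),
\]
and a further application of Serre duality on the $K3$ surface yields $\dim W_q(X,F)=h^1(X,\Sym^{q+2}E)$.

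It remains to compute $\dim W_q(V,K)$ for the Koszul module $W(X,F)$, where $V=H^0(X,F)^\vee$ and $K=\ker(d)^\perp$ for the determinant map $d\colon \bigwedge^2 H^0(X,F)\to H^0(X,L)$. The key numerical coincidence is that $\dim K=2n-3$ holds automatically: one has $\dim K=\mathrm{rank}(d)\leq h^0(X,L)=2r+1=2n-3$, while the vanishing resonance forces $\dim K\geq 2n-3$. Thus the equality case of Theorem~\ref{thm:mainresult} applies and, for $q=b-2\in\{0,\ldots,r-2\}$, yields
\[
\dim W_{b-2}(X,F)=\binom{r+b-1}{b-2}\frac{r(r-b+1)}{b}=\binom{r+b-1}{r+1}\frac{r(r-b+1)}{b},
\]
which is exactly the claimed formula for $2\leq b\leq r$; the remaining boundary cases $b=0,1$ are immediate from $H^1(X,\mathcal{O}_X)=0$ and the Lazarsfeld-Mukai vanishing $H^1(X,E)=0$. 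The step I expect to require the most care is the identification $M_F\cong E^\vee$, hinging on the two vanishings $H^0(X,E^\vee)=H^1(X,E^\vee)=0$; once these are in place, the proof is a clean assembly of Theorems~\ref{lm_eredmeny}, \ref{thm:det}, and~\ref{thm:mainresult}, together with the automatic equality $\dim K=2n-3$.
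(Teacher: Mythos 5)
Your proposal is correct and follows essentially the same route as the paper, whose proof simply cites Theorem \ref{lm_eredmeny} together with the estimate of Theorem \ref{thm:mainresult} applied in the divisorial case. You have merely unpacked what the paper leaves implicit: the identification $M_{M_E^\vee}\cong E^\vee$, the Serre duality step $h^1(X,\Sym^b E^\vee)=h^1(X,\Sym^b E)$, and the verification that $\dim K=2n-3$ (rank of $d$ bounded by $h^0(X,L)=2r+1$ from above and by vanishing resonance from below), so that the equality case of Theorem \ref{thm:mainresult} indeed applies.
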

\begin{proof} Apply directly Theorem \ref{lm_eredmeny} coupled with the estimate provided by Theorem \ref{thm:mainresult}.
\end{proof}

\begin{rmk}
Inside the moduli space $\mathcal{F}_g$ of polarized $K3$ surfaces of genus $g$, the locus $\mathfrak{NL}_1$ of those polarized $K3$ surfaces $[X,L]$ for which $H^1(X,\Sym^{r+1} E)\neq 0$ for a vector bundle $E\in M_L^s(r, L,2)$ is a divisor
of Noether-Lefschetz type. Similarly, for $b\geq 1$, the locus $\mathfrak{NL}_b$   of those $[X,L]\in \mathcal{F}_{2r}$ for which $h^1\bigl(X, \Sym^b E)>{r+b-1\choose r+1}\frac{r(r-b+1)}{b}$ is via Theorem \ref{rk2k3} of Noether-Lefschetz type and its class can be computed in terms of the Hodge classes on $\F_g$. Understanding the relative position of the classes $\mathfrak{NL}_b$, in particularly deciding when these loci are empty  will thus lead to non-trivial relations among tautological classes in $CH^{\bullet}(\F_g)$ in the spirit of \cite{FR} or \cite{PY}.
\end{rmk}

Keeping the set-up as above, we fix a general curve $C\in |L|$, therefore $C$ is smooth of genus $2r$ and $W^1_{r+1}(C)$ consists of $\frac{(2r)!}{r!\cdot(r+1)!}$ reduced points, see \cite{L86}. The restriction $E_C$ of the Lazarsfeld-Mukai bundle $E\in M_L(r, L,2)$ is a stable rank $2$ vector bundle with $\mbox{det}(E_C)\cong \omega_C$ and $h^0(C,E_C)=h^0(X,E)=r+2$. Since $h^0(C,\omega_C)<2h^0(C,E_C)-3$, the vector bundle $E_C$ has  non--trivial resonance which we describe below.

\medskip

Put $V=H^0(C,E_C)^\vee$ and $K:=H^0(C,\omega_C)^\vee$ viewed as a subset of $\bigwedge^2V$ via the dual of the map
$d\colon \bigwedge^2 H^0(C,E_C)\rightarrow H^0(C, \omega_C)$. Each pure tensor $[a\wedge b]\in \mathrm{Gr}_2(V^\vee)\cap \mathbf{P}K^\perp$ corresponds to a globally generated subpencil of $E_C$. Without loss of generality, we may assume that the quotient is locally free. We can prove even more:

\begin{lemma}
\label{lem:subpencils}
If  $A$ is a line subbundle  of $E_C$ with $h^0(C, A)\ge 2$, then $A\in W^1_{r+1}(C)$.
\end{lemma}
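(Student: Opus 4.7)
\emph{Plan.} I will lift the pencil defining $A$ to sections of $E$ on the $K3$ surface, then exploit the hypotheses $\Pic(X) = \ZZ \cdot L$ and the stability of $E$ to pin down both the degree of $A$ and the number of its global sections.

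\medskip

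First I would verify the restriction isomorphism $H^0(X,E) \cong H^0(C, E_C)$: one has $H^1(X, E) = 0$ since $E$ is Lazarsfeld--Mukai, while $L$-stability of $E$ together with $\Pic(X) = \ZZ \cdot L$ rules out $L \hookrightarrow E$, so $H^0(X, E \oo L^{-1}) = 0$, and Serre duality on the $K3$ surface combined with the isomorphism $E^\vee \cong E \oo L^{-1}$ (valid since $E$ has rank $2$ and $\det E = L$) yields $H^1(X, E \oo L^{-1}) = 0$. Hence $s_1, s_2 \in H^0(C, A) \subseteq H^0(C, E_C)$ lift uniquely to sections $\tilde s_1, \tilde s_2 \in H^0(X, E)$. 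Their wedge $\tilde s_1 \wedge \tilde s_2 \in H^0(X, \bw^2 E) = H^0(X, L)$ restricts to $s_1 \wedge s_2 = 0 \in H^0(C, \omega_C)$ because $s_1, s_2$ both lie in the line sub-bundle $A$, so $\tilde s_1 \wedge \tilde s_2 = c\,\sigma_C$ for some constant $c$, where $\sigma_C \in H^0(X,L)$ cuts out $C$.

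\medskip

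If $c = 0$, then $\tilde s_1, \tilde s_2$ are pointwise proportional on $X$ and span a rank-one subsheaf of $E$ whose saturation is a line bundle of the form $L^{\oo k} \hookrightarrow E$ (by $\Pic(X) = \ZZ \cdot L$ and reflexivity on the smooth surface $X$). From $h^0(L^{\oo k}) \geq 2$ one has $k \geq 1$, contradicting stability of $E$ via $\mu_L(L^{\oo k}) = k(2g-2) \geq 2g - 2 > g - 1 = \mu_L(E)$. Thus $c \neq 0$, and $\phi := (\tilde s_1, \tilde s_2) \colon \OO_X^{\oplus 2} \hookrightarrow E$ is injective with $\det(\phi) = c\,\sigma_C$ vanishing simply along $C$. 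Since $|A|$ is base-point-free, $\phi$ has rank exactly one at every point of $C$, so its cokernel has the form $\iota_* F$ for a line bundle $F$ on $C$; an Euler characteristic count on $0 \to \OO_X^{\oplus 2} \to E \to \iota_* F \to 0$ gives $\chi_C(F) = \chi(E) - 2\chi(\OO_X) = r-2$, hence $\deg F = 3r - 3$. Restricting the sequence to $C$ (the single $\mathrm{Tor}$ term computes to $F \oo \omega_C^{-1} = A^\vee$) identifies $F$ with $E_C/A = \omega_C \oo A^\vee$, so $\deg A = (2g-2) - (3r-3) = r+1$.

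\medskip

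It then remains to compute $h^0(A)$: since $C$ is a general curve in $|L|$ on a Picard rank-one $K3$ it is Brill--Noether general by Lazarsfeld's theorem, and $\rho(2r, 2, r+1) = -r-3 < 0$ forces $W^2_{r+1}(C) = \emptyset$; combined with $h^0(A) \geq 2$ this gives $h^0(A) = 2$, so $A \in W^1_{r+1}(C)$. The most delicate step will be the case $c \neq 0$: one must rule out extra torsion in the cokernel of $\phi$, which is precisely where the base-point-freeness of $|A|$ (part of the paper's reduction to globally generated subpencils with locally free quotient) is indispensable.
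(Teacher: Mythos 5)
Your argument is correct in substance but follows a genuinely different route from the paper's. The paper stays entirely on the curve: from the extension $0\to A\to E_C\to \omega_C\otimes A^\vee\to 0$, global generation of $E_C$ forces $\omega_C\otimes A^\vee$ to be globally generated, stability of $E_C$ rules out $A\cong\omega_C$, so $A$ contributes to the Clifford index; the count $h^0(C,A)+h^0(C,\omega_C\otimes A^\vee)\ge h^0(C,E_C)=r+2$ gives $\mathrm{Cliff}(A)\le r-1$, and since the general curve of genus $2r$ attains its Clifford index only through pencils of degree $r+1$ (and their residuals), either $A$ or $\omega_C\otimes A^\vee$ lies in $W^1_{r+1}(C)$, the residual case being excluded again by stability. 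You instead lift the pencil to the $K3$ surface and reconstruct the Lazarsfeld--Mukai elementary modification $0\to\mathcal{O}_X^{\oplus 2}\to E\to\iota_*F\to 0$, extracting $\deg A=r+1$ from an Euler characteristic count. Your route uses more surface-level input ($H^1(X,E)=0$, $L$-stability of $E$, $\mathrm{Pic}(X)=\mathbb{Z}\cdot L$, the Mukai vector), but it yields slightly more: the exact degree of $A$ and the base-point-freeness of the lifted pencil, whereas the paper's proof is shorter and purely Brill--Noether-theoretic.

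One step needs repair. You justify that $\phi$ has rank exactly one at every point of $C$ by invoking base-point-freeness of $|A|$; this is not among the hypotheses of the lemma (only $h^0(C,A)\ge 2$ is assumed), and even if $|A|$ were base point free, the particular pencil spanned by $s_1,s_2$ could a priori have base points when $h^0(C,A)>2$. Fortunately the needed fact is automatic in your case $c\ne 0$: if $\tilde s_1(x)=\tilde s_2(x)=0$ for some $x\in C$, then in a local trivialization all entries of $\phi$ lie in $\mathfrak{m}_x$, hence $\det\phi\in\mathfrak{m}_x^2$, contradicting $\det\phi=c\,\sigma_C$ with $\sigma_C$ a local equation of the smooth curve $C$ at $x$. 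So base-point-freeness is a consequence of $c\neq 0$, not an "indispensable" extra input, and with this one-line substitution your proof is complete. (The closing paragraph on $W^2_{r+1}(C)=\emptyset$ is not needed: membership in $W^1_{r+1}(C)$ only requires $\deg A=r+1$ and $h^0(C,A)\ge 2$.)
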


\begin{proof} The bundle $E_C$ lies in an extension
\[
0\longrightarrow A\longrightarrow  E_C\longrightarrow  \omega_C\otimes A^{\vee}\longrightarrow 0.
\]

Since $E_C$ is globally generated, $\omega_C\otimes A^{\vee}$ is also globally generated and hence either $h^0(C,\omega_C\otimes A^{\vee})\ge 2$ or $A\cong \omega_C$. Since $E_C$ is stable  and $\mu(E_C)=2r-1$, the latter case is ruled out. In particular, $A$ contributes to the Clifford index. On the other hand,
$h^0(C,A)+h^0(C,\omega_C\otimes A^{\vee})\ge h^0(C,E_C)=r+2$ which implies that $\mathrm{Cliff}(A)\le r-1$. Hence either $A$ or its residual $\omega_C\otimes A^{\vee}$ belong to $W^1_{r+1}(C)$. However, the latter case contradicts the stability of $E_C$, hence it does not appear.
\end{proof}

Lemma \ref{lem:subpencils} shows that $\mathrm{Gr}_2(V^\vee)\cap \mathbf{P}K^\perp\cong W^1_{r+1}(C)$ and is finite and moreover $$\bigl|\mathrm{Gr}_2(V^\vee)\cap \mathbf{P}K^\perp\bigr|=\frac{(2r)!}{r!\cdot (r+1)!}.$$

Before stating the next result we recall the various properties of the resonance variety of a Koszul module given in Definition \ref{def:resprop}.
In the case of a vector bundle over a curve, isotropy and separability are related to specific geometric properties. The following result will be used later:

\begin{lem}\label{lem:isotropy}
Let $F$ be a vector bundle of rank $2$ over a smooth curve $C$ and let $\overline{V}^\vee\subseteq H^0(C,F)$.
\begin{itemize}
\item[(i)] $\overline{V}^\vee$ is isotropic if and only if it generates a rank-one subsheaf $B$ inside $F$.
\item[(ii)] Suppose that $E$ is given by an extension of line bundles
\[
0\longrightarrow B\longrightarrow F\longrightarrow B'\longrightarrow 0,
\]
with $B$ globally generated, and denote $\overline{W}:=\mathrm{Im}\bigl\{H^0(C,F)\to H^0(C,B')\bigr\}$. If the multiplication map
\[
\mu\colon H^0(C,B)\otimes \overline{W}\to H^0\bigl(C,\mathrm{det}(F)\bigr)
\]
is injective, then $\overline{V}^\vee=H^0(C,B)$ is strongly isotropic.
\end{itemize}
\end{lem}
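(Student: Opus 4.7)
Part (i) is essentially a reformulation of definitions. For the rank-$2$ bundle $F$, two sections $s, t \in H^0(C,F)$ satisfy $s \wedge t = 0$ in $H^0(C, \det F)$ if and only if $s(p)$ and $t(p)$ are proportional at every $p \in C$, since $\det F$ is a line bundle and a section of a line bundle vanishes iff it vanishes pointwise. Consequently, $\overline V^\vee$ is isotropic iff the evaluation map $\overline V^\vee \otimes \OO_C \to F$ has generic rank at most one, iff $\overline V^\vee$ generates a rank-one subsheaf of $F$.

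For part (ii), isotropy of $\overline V^\vee := H^0(C,B)$ is immediate from (i), since $B$ is globally generated. The crux is separability. Since $K^\perp$ lives in degree two of $E = \bw V^\vee$, the intersection $K^\perp \cap \langle \overline V^\vee \rangle_E$ coincides with $K^\perp \cap \bigl( H^0(C,B) \wedge H^0(C,F) \bigr)$, and I need to show this sits inside $\bw^2 H^0(C,B)$; the reverse inclusion follows from isotropy combined with $\bw^2 H^0(C,B) \subseteq H^0(C,B) \wedge H^0(C,F)$.

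The key identification is that the extension $0 \to B \to F \to B' \to 0$ gives $\det F = B \otimes B'$, under which the wedge $s \wedge t$ for $s \in H^0(C,B)$ and $t \in H^0(C,F)$ corresponds to $s \otimes \bar t \in H^0(C, B \otimes B')$, with $\bar t \in \overline W$ the image of $t$ in $H^0(C,B')$. Thus for an element $\alpha = \sum_i s_i \wedge t_i \in K^\perp \cap \bigl( H^0(C,B) \wedge H^0(C,F) \bigr)$, the condition $\alpha \in K^\perp$ reads $\mu\bigl( \sum_i s_i \otimes \bar t_i \bigr) = 0 \in H^0(C, \det F)$. The injectivity hypothesis on $\mu$ then forces $\sum_i s_i \otimes \bar t_i = 0$ in $H^0(C,B) \otimes \overline W$. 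Tensoring the tautological exact sequence
\begin{equation*}
0 \lra H^0(C,B) \lra H^0(C,F) \lra \overline W \lra 0
\end{equation*}
on the left by $H^0(C,B)$ (which preserves exactness) shows that $\sum_i s_i \otimes t_i$ lies in $H^0(C,B) \otimes H^0(C,B)$, and projecting to $\bw^2 H^0(C,F)$ yields $\alpha \in \bw^2 H^0(C,B) = \bw^2 \overline V^\vee$, as required.

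The only genuine content is the injectivity of $\mu$, which is the hypothesis; once that is invoked, the conclusion is a formal consequence of tensoring the short exact sequence of $H^0$'s by a vector space. I do not anticipate any obstacle beyond correctly identifying $\det F$ with $B \otimes B'$ and tracking the sign conventions relating wedge products to antisymmetrized tensors.
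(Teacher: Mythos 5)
Your proposal is correct and follows essentially the same route as the paper: part (i) via pointwise proportionality of sections of a rank-$2$ bundle, and part (ii) via the key identity $d(s\wedge t)=\mu(s\otimes \bar t)$ together with the injectivity of $\mu$. The only (cosmetic) difference is that you conclude by tensoring the sequence $0\to H^0(C,B)\to H^0(C,F)\to \overline{W}\to 0$ with $H^0(C,B)$, whereas the paper picks a representative $\sum a_i\wedge b_i$ with the $a_i$ linearly independent and deduces $b_i'=0$ directly.
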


\proof
(i) If $L\subseteq F$ is a rank-one subsheaf, then $H^0(C,B)\subseteq H^0(C,F)$ is isotropic, as the restriction of the determinant map to $\bigwedge^2H^0(C,B)$ vanishes identically. Conversely, let $\overline{V}^\vee$ be an isotropic subspace of $H^0(C,F)$.
Then any vector $0\neq a\wedge b\in \bigwedge^2 \overline{V}^\vee$ generates a rank-one subsheaf of $F$. In particular, for a generic point $x\in C$ the vectors $a(x), b(x)\in F(x)$ are linearly dependent, hence the span of $\bigl\{a(x): a\in \overline{V}^\vee\bigr\}$ is one--dimensional.


(ii) Observe first that if $a\in\overline{V}^\vee$, $b\in H^0(C,F)$ and $b'\in \overline{W}$ is the image of $b$, then $\mu(a\otimes b')=d(a\wedge b)$.
Assume $\sum a_i\wedge b_i\in K^\perp\cap \langle\overline{V}^\vee\rangle_{\wedge V^{\vee}}$, with $\{a_i\}$ linearly independent in $\overline{V}^\vee$ and $b_i\in H^0(C,F)$. If $b'_i$ is the image of $b_i$ in $\overline{W}$, we have $\mu(\sum a_i\otimes b_i')=0$. From the hypothesis,  $\sum a_i\otimes b_i'=0$ and, since $a_i$ are independent, $b_i'=0$, that is,  $b_i\in H^0(C,B)$ for all $i$.  In particular, $\sum a_i\wedge b_i\in \bigwedge^2\overline{V}^\vee$.
\endproof

We now return to the set-up when $C\in |L|$ is a curve of genus $2r$ on a $K3$ surface $X$.

\vskip 4pt

\begin{proposition}
\label{prop:genus8}
Fix $C\in |L|$ generic as above. The resonance $\mathcal{R}(C, E_C)$ is strongly isotropic, and its projectivisation $\mathbf{R}(C, E_C)$ is the union of $\frac{(2r)!}{r!\cdot (r+1)!}$ disjoint lines.
\end{proposition}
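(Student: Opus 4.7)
\emph{Proof proposal.} My plan is to combine the two structural ingredients already set up in the excerpt: the characterization of subpencils of $E_C$ via $W^1_{r+1}(C)$ (Lemma~\ref{lem:subpencils}) and the Petri-type criterion for strong isotropy (Lemma~\ref{lem:isotropy}(ii)), applied to each subpencil separately.

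First I would argue the set-theoretic description. With $V=H^0(C,E_C)^\vee$ and $K=H^0(C,\omega_C)^\vee$, Lemma~\ref{lem:subpencils} identifies $\mathbf{P}K^\perp\cap\mathrm{Gr}_2(V^\vee)$ with the set of globally generated subpencils of $E_C$, which in turn coincides with $W^1_{r+1}(C)$. Since $C$ is Petri-general (via Lazarsfeld's theorem \cite{L86}), this consists of exactly $\frac{(2r)!}{r!\,(r+1)!}$ reduced points. Every point $[a\wedge b]$ in this intersection is therefore isolated, so Proposition~\ref{prop:isolated} applies and produces the corresponding line $\ell_{ab}\subseteq \mathbf{P}V^\vee$ as an isotropic connected component of $\mathbf{R}(V,K)$. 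Being connected components, the $\frac{(2r)!}{r!\,(r+1)!}$ lines so produced are automatically pairwise disjoint, and they exhaust $\mathbf{R}(V,K)$ because every line in $\mathbf{R}(V,K)$ corresponds to a point of $\mathbf{P}K^\perp\cap\mathrm{Gr}_2(V^\vee)$.

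Next I would upgrade ``isotropic'' to ``strongly isotropic'' component by component. Fix $A\in W^1_{r+1}(C)$ with corresponding subspace $\overline V^\vee=H^0(C,A)\subseteq H^0(C,E_C)$, sitting in the extension
\[
0\longrightarrow A\longrightarrow E_C\longrightarrow \omega_C\otimes A^\vee\longrightarrow 0.
\]
Since $h^0(C,E_C)=r+2$, $h^0(C,A)=2$, and Riemann--Roch gives $h^0(C,\omega_C\otimes A^\vee)=r$, the connecting map vanishes and the quotient $\overline W\subseteq H^0(C,\omega_C\otimes A^\vee)$ of Lemma~\ref{lem:isotropy}(ii) is all of $H^0(C,\omega_C\otimes A^\vee)$. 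Moreover $A$ is base-point free on a Petri-general curve of genus $2r$ (any base point would force $W^1_r(C)\ne\emptyset$, contradicting Brill--Noether). Petri's theorem for $C$ then asserts precisely the injectivity of
\[
\mu\colon H^0(C,A)\otimes H^0(C,\omega_C\otimes A^\vee)\longrightarrow H^0(C,\omega_C)=H^0\bigl(C,\det(E_C)\bigr),
\]
so Lemma~\ref{lem:isotropy}(ii) gives that $\overline V^\vee=H^0(C,A)$ is strongly isotropic, completing the proof.

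The only substantive step is the verification of the hypotheses of Lemma~\ref{lem:isotropy}(ii); everything else is an assemblage of previously recorded facts. The hard part is really just making sure that the Petri map relevant to each $A\in W^1_{r+1}(C)$ is injective, which I expect to follow cleanly from Petri-generality of a general $C\in|L|$ on a $K3$ surface with $\mathrm{Pic}(X)=\mathbb Z\cdot L$.
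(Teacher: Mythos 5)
Your proposal is correct and follows essentially the same route as the paper: the identification $\mathrm{Gr}_2(V^\vee)\cap\mathbf{P}K^\perp\cong W^1_{r+1}(C)$ (Lemma \ref{lem:subpencils}, Lazarsfeld) plus Proposition \ref{prop:isolated} yields the $\frac{(2r)!}{r!\cdot(r+1)!}$ disjoint isotropic lines, and separability comes from Lemma \ref{lem:isotropy}(ii) via injectivity of the Petri map for each $A\in W^1_{r+1}(C)$. Your extra verifications (that $\overline{W}=H^0(C,\omega_C\otimes A^\vee)$ and that $A$ is base-point free) are correct details the paper leaves implicit.
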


\proof
From Proposition \ref{prop:isolated} we infer that  that $\mathbf{R}(C,E_C)$ is a union of $\frac{(2r)!}{r!\cdot(r+1)!}$ disjoint lines, all isotropic. In order to establish the separability of these components, $\ell_{ab}$ be a component, corresponding to a point $[a\wedge b]\in \mathrm{Gr}_2(V^\vee)\cap \mathbf{P}K^\perp$ and denote by $\overline{V}^\vee$ the subspace in $V^\vee$ generated by $a$ and $b$. If $A$ denotes the subpencil of $E_C$ generated by $a$ and $b$,
then $E_C$ is presented as an extension
\[
0\longrightarrow  A\longrightarrow  E_C\longrightarrow \omega_C\otimes A^{\vee}\longrightarrow  0
\]
Since the Petri map $\mu\colon H^0(C, A)\otimes H^0(C, \omega_C\otimes A^{\vee})\to H^0(C, \omega_C)$ is injective, we can apply Lemma \ref{lem:isotropy} (ii) to conclude.
\endproof

\subsection{Koszul modules associated to $K3$ surfaces of odd genus.} Using a variation compared to the even genus case, one can also associate to a general $K3$ surface of \emph{odd} genus a
Koszul module $W(V,K)$ in the divisorial case $\mbox{dim}(K)=2\ \mbox{dim}(V)-3$ as follows.

\vskip 3pt

Fix a polarized $K3$ surface $[X,L]$ of odd genus $g=2r+1$ such that $\mbox{Pic}(X)=\ZZ \cdot L$ and choose a smooth curve
$C\in |L|$. Recall that  $\widehat{X}:=M_L(2,X,r)$ is  the Fourier-Mukai partner of $X$. Denoting by
$\mathcal{SU}_C(2,\omega, r+2)$ the moduli space of $S$-equivalence classes of semistable rank $2$ vector bundles $E_C$ on $C$ with $\mbox{det}(E_C)\cong \omega_C$ and $h^0(C,E_C)\geq r+2$, the restriction map induces an isomorphism, see \cite{ABS1},
$$\widehat{X} \cong \mathcal{SU}_C(2,\omega,r+2), \ \ \mbox{ } \ E\mapsto E_C.$$
Moreover, it can be shown that $X$ as the Fourier-Mukai dual of $\mathcal{SU}_C(2, \omega, r+2)$ is the only $K3$ surface containing $C$ as long as $s$ is odd, see \cite{ABS1}, \cite{Fe}.

The Brill-Noether locus $W^1_{r+2}(C)$ is a smooth curve (recall that $C$ satisfies Petri's Theorem \cite{L86}) and we have the following formula for its genus \cite{EH87}:
\begin{equation}
g\bigl(W^1_{r+2}(C)\bigr)=1+\frac{r}{r+1}{2r+2\choose r}.
\end{equation}
Using \cite{V1}, one has  a map
$j\colon W^1_{r+2}(C)\rightarrow \widehat{X}$ which associates to $A\in W^1_{r+2}(C)$ the  rank $2$ Lazarsfeld-Mukai vector bundle $E_{C,A}$
defined by the sequence (\ref{lm_sorozat}). Its restriction $E_A:=E_{C,A}\otimes \OO_C$ to $C$ satisfies  $h^0(S,E_{C,A})=h^0(C, E_A)=r+2$.

\vskip 4pt

To a pair $(C,E_C)$, where $C\in |L|$ and $E_C\in \mathcal{SU}_C(2,\omega,r+2)$, using Definition \ref{ex:W(X,E)} we associate the Koszul module
$W(C,E_C):=W(V,K)$ and its resonance variety $\mathcal{R}(C, E_C)$. Note that since $h^0(C,\omega_C)=2h^0(C,E)-3$, we are in the divisorial case of Theorem \ref{thm:mainresult}. We denote by $M_{E_C}$ the kernel of the evaluation map $H^0(C,E_C)\otimes \OO_C\rightarrow E_C$.

\begin{thm}
One has a canonical identification $$j\bigl(W^1_{r+2}(C)\bigr)\cong \bigl\{E_C\in \mathcal{SU}_C(2,\omega, r+2):\mathcal{R}(C,E_C)\neq 0\bigr\}.$$
Furthermore, for each $E_C\in \mathcal{SU}_C(2,\omega, r+2)\setminus j\bigl(W^1_{r+2}(C)\bigr)$ the map
\begin{equation}\label{kgorbe}
H^1\bigl(C,\Sym^{r+1} M_{E_C}\bigr)\rightarrow \Sym^{r+1} H^0(C,E_C)\otimes H^1(C,\OO_C)
\end{equation}
is injective.
\end{thm}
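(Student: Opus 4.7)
The plan is to establish the canonical identification first, and then to deduce the injectivity of (\ref{kgorbe}) as a direct consequence of Theorems~\ref{thm:det} and~\ref{thm:mainresult}. The forward inclusion $j(W^1_{r+2}(C))\subseteq\{E_C:\mathcal{R}(C,E_C)\neq 0\}$ is immediate from the defining sequence (\ref{lm_sorozat}): for $A\in W^1_{r+2}(C)$, the line subsheaf $A\hookrightarrow E_A$ carries at least two linearly independent global sections whose wedge is a non-zero element of $\ker(d)$, certifying $\mathcal{R}(C,E_A)\neq 0$.

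For the reverse inclusion, let $E_C\in \mathcal{SU}_C(2,\omega,r+2)$ with $\mathcal{R}(C,E_C)\neq 0$. Via Definition~\ref{def:resonance}, one obtains a line subbundle $A\hookrightarrow E_C$ with $h^0(C,A)\geq 2$ and quotient $\omega_C\otimes A^\vee$. Arguing as in Lemma~\ref{lem:subpencils}, the combination of $E_C$-semistability (which gives $\deg A\leq 2r$), Clifford's inequality, Petri injectivity for the generic $C\in |L|$, and the bound $h^0(C,A)+h^0(C,\omega_C\otimes A^\vee)\geq h^0(C,E_C)=r+2$ forces $h^0(C,A)=2$ and $\deg A=r+2$, so $A\in W^1_{r+2}(C)$. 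The numerical identity $h^0(A)+h^0(\omega_C\otimes A^\vee)=h^0(E_C)$ then forces the connecting map of $0\to A\to E_C\to \omega_C\otimes A^\vee\to 0$ to vanish, and combined with the ABS1 isomorphism $\widehat{X}\cong \mathcal{SU}_C(2,\omega,r+2)$ and the uniqueness of the Lazarsfeld-Mukai bundle attached to $A$, this identifies $E_C$ with $E_A=j(A)$. Having established this set-theoretic identification, I apply Theorem~\ref{thm:det} to the pair $(C,E_C)$ with $n:=h^0(C,E_C)=r+2$ and $q:=n-3=r-1$ (the determinant map $d$ is non-zero since $H^0(C,\omega_C)\neq 0$, and $E_C$ is globally generated) to obtain
$$W_{r-1}(C,E_C)^\vee\cong \ker\Bigl\{H^1(C,\Sym^{r+1}M_{E_C})\to \Sym^{r+1}H^0(C,E_C)\otimes H^1(C,\OO_C)\Bigr\}.$$
If $E_C\notin j(W^1_{r+2}(C))$, then $\mathcal{R}(C,E_C)=\{0\}$ by the identification, so Theorem~\ref{thm:mainresult} with $n=r+2$ gives $W_q(C,E_C)=0$ for all $q\geq r-1$. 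In particular $W_{r-1}(C,E_C)=0$, the kernel above vanishes, and the map (\ref{kgorbe}) is injective.

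The main obstacle I anticipate is the reconstruction step in the reverse inclusion: from a bare subpencil $A\hookrightarrow E_C$ one has to recover the full isomorphism $E_C\cong E_A$. The Petri-Clifford numerics constrain $A$ to $W^1_{r+2}(C)$ and force the connecting homomorphism of the induced extension to vanish, but pinning down the extension class as the Lazarsfeld-Mukai class requires invoking the ABS1 isomorphism $\widehat{X}\cong\mathcal{SU}_C(2,\omega,r+2)$ together with the uniqueness of $E_{C,A}$ on $X$. Once this geometric identification is in place, the cohomological conclusion is a one-line consequence of Theorems~\ref{thm:det} and~\ref{thm:mainresult}.
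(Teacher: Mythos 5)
The paper states this theorem without writing out a proof, so the comparison is against the intended assembly of the surrounding results. Your second half is exactly that intended assembly and is correct: since $h^0(C,\omega_C)=2h^0(C,E_C)-3$ one is in the divisorial case, and Theorem~\ref{thm:det} with $q=r-1$, $n=h^0(C,E_C)=r+2$, combined with Theorem~\ref{thm:mainresult}, gives the injectivity of (\ref{kgorbe}) off the resonant locus (modulo the unremarked point that $E_C$ must be known to be globally generated for $M_{E_C}$ and Theorem~\ref{thm:det} to apply). The forward inclusion and the numerical part of the reverse inclusion are also fine: semistability, Clifford (with $\mathrm{Cliff}(C)=r$ for the Petri general $C$, $g=2r+1\geq 11$) and the bound $h^0(C,A)+h^0(C,\omega_C\otimes A^{\vee})\geq r+2$ force $h^0(C,A)=2$, $\deg(A)=r+2$, hence $A\in W^1_{r+2}(C)$, $h^0(C,E_C)=r+2$, and the vanishing of the connecting map.

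The genuine gap is the reconstruction step you yourself flag: knowing that $E_C$ is an extension $0\to A\to E_C\to\omega_C\otimes A^{\vee}\to 0$ whose connecting map vanishes does \emph{not} yet identify $E_C$ with $E_A$, and invoking "the ABS1 isomorphism together with the uniqueness of $E_{C,A}$" is not an argument. On the curve, the extensions with vanishing connecting map form the annihilator of the image of the multiplication map $\mathrm{Sym}^2H^0(C,\omega_C\otimes A^{\vee})\to H^0(C,\omega_C^{2}\otimes A^{-2})$ inside $\mathrm{Ext}^1(\omega_C\otimes A^{\vee},A)$; to conclude $E_C\cong E_A$ this way you would need that annihilator to be one-dimensional (a corank-one statement for this multiplication map), which you neither prove nor cite, and which is not among the quoted results. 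The argument that actually closes the gap goes through the surface: by the ABS1 isomorphism write $E_C=E|_C$ for a (stable) $E\in\widehat{X}$; since $H^0(X,E\otimes L^{\vee})=0$ by stability and $h^0(X,E)\geq\chi(E)=r+2=h^0(C,E_C)$, restriction is an isomorphism on global sections, so the pencil $H^0(C,A)\subseteq H^0(C,E_C)$ lifts to sections $\tilde s_1,\tilde s_2$ of $E$; stability and $\mathrm{Pic}(X)=\ZZ\cdot L$ rule out $\tilde s_1\wedge\tilde s_2=0$, so $\tilde s_1\wedge\tilde s_2$ cuts out $C$, yielding $0\to H^0(C,A)^{\vee}\otimes\OO_X\to E\to\iota_*(\omega_C\otimes A^{\vee})\to 0$; finally, the locally free extension of $\iota_*(\omega_C\otimes A^{\vee})$ by $\OO_X^{2}$ is unique up to isomorphism (compute $\mathrm{Ext}^1_X(\iota_*(\omega_C\otimes A^{\vee}),\OO_X)\cong H^0(C,A)$ and act by $GL_2$), so $E\cong E_{C,A}$ and hence $E_C\cong E_A=j(A)$. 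Without some version of this lifting-and-uniqueness argument (or the corank-one statement), the reverse inclusion is not established.
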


The geometric meaning of the injectivity of the map (\ref{kgorbe}) is mysterious and requires further study. In what follows we will prove Theorem
\ref{thm:classVoisin}.

\vskip 3pt

Recall that $[X, L]\in \mathcal{F}_{2r+1}$ with $\mbox{Pic}(X)=\mathbb Z\cdot L$ and we consider the projections
\[
\xymatrix{
  & X\times \widehat{X} \ar[dl]_{\pi_1} \ar[dr]^{\pi_2} & \\
   X & & \widehat{X},       \\
                 }
\]
and denote by $\P$ a Poincar\'e bundle of rank $2$ on $X\times \widehat{X}$. \footnote{A Poincar\'e bundle $\P$ exists only when $g\equiv 3 \  \mathrm{mod} \ 4$,  that is, when $r$ is odd. When $r$ is even, the class $\phi$ is divisible by two and $\P$ does not exist globally. As pointed out in \cite{M2}, one has to take instead the universal $\PP^1$-bundle corresponding to $\PP(\P)$ (which does exist) and carry out the calculation of the class of the curve $\mathcal{R}(\widehat{X}, C)$ at that level. Theorem \ref{thm:classVoisin} remains valid independent of the parity of $r$.}
One writes
$$c_1(\P)=\pi_1^*h+\pi_2^*\varphi\in \pi_1^*H^2(X)\oplus \pi_2^*H^2(\widehat{X}) \ \ \mbox{ and  } \ \ c_2^{\mathrm{mid}}(\P)\in \pi_1^*H^2(X)\otimes \pi_2^*H^2(\widehat{X})$$
for the first Chern class of $\P$ respectively the middle class in the K\"unneth decomposition of $c_2(\P)$. Following \cite{M2} we define the class
$\psi\in H^2(\widehat{X})$ characterized by the property $\pi_1^*h\cdot c_2^{\mathrm{mid}}(\P)=[\mathrm{pt}]\otimes \pi_2^*\psi \in \pi_1^* H^4(X)\oplus \pi_2^*H^2(\widehat{X})$, where
$[\mathrm{pt}]$ is the fundamental class of $X$. It is also shown in \cite{M2} that if one sets
\begin{equation}\label{eq:dualpol}
\widehat{h}:=\psi-2r\cdot \varphi \in H^2(\widehat{X}),
\end{equation}
then $\widehat{h}$ is a polarization on $\widehat{X}$ satisfying $\widehat{h}^2=h^2=2g-2=4r$.
We now introduce the following vector bundles on $\widehat{X}$ having as fibres over a point $[E]$ the spaces $H^0(X, E)$ and
$H^0\bigl(C,\mbox{det}(E_C)\bigr)\cong H^0(C, \omega_C)$ respectively, that is,
$$\E:=(\pi_2)_*\bigl(\P\bigr)\ \ \ \mbox{ and } \ \ \ \F:=(\pi_2)_*\bigl(\bigwedge^2 \P_{|C\times \widehat{X}}\bigr).$$

\begin{prop}\label{prop:Chernclasses}
The following formulas hold in $H^2(\widehat{X})$:

$$c_1(\F)=(2r+1)\varphi   \ \ \ \mbox{  and  } \ \ \   c_1(\E)=\frac{3r+2}{2}\varphi-\frac{\psi}{2}.$$
\end{prop}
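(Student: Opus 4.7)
The plan is to compute the two Chern classes by complementary methods: $c_1(\F)$ admits a clean derivation via the see-saw principle, while $c_1(\E)$ requires a Grothendieck--Riemann--Roch calculation on $\pi_2\colon X\times \widehat{X}\to\widehat{X}$.

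For $c_1(\F)$, set $M:=\bigwedge^2\P\big|_{C\times\widehat{X}}$, a line bundle on $C\times\widehat{X}$. Because every $[E]\in\widehat{X}$ satisfies $\det(E_C)\cong\omega_C$, the restriction $M|_{C\times\{[E]\}}$ is the fixed line bundle $\omega_C$, independent of $[E]$. The see-saw principle then produces an isomorphism $M\cong p_C^*\omega_C\otimes p_2^*B$ for some line bundle $B$ on $\widehat{X}$, where $p_C, p_2$ denote the projections from $C\times\widehat{X}$. Restricting $c_1\bigl(\bigwedge^2\P\bigr)=\pi_1^*h+\pi_2^*\varphi$ to $C\times\widehat{X}$ and using $h|_C=c_1(\omega_C)$, the K\"unneth decomposition of $c_1(M)$ forces $c_1(B)=\varphi$. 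The projection formula then gives $\F\cong H^0(C,\omega_C)\otimes \OO_{\widehat{X}}(\varphi)$, whence $c_1(\F)=h^0(C,\omega_C)\cdot \varphi=(2r+1)\varphi$.

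For $c_1(\E)$, I first observe that $\E=R\pi_{2,*}\P$ is a vector bundle of rank $r+2$, since each Lazarsfeld--Mukai bundle $E$ with $v(E)=(2,L,r)$ satisfies $H^{\ge 1}(X,E)=0$. Applying Grothendieck--Riemann--Roch with $\mathrm{td}(X)=1+2\pi_1^*[\mathrm{pt}]$, I extract the degree-two piece on $\widehat{X}$ of
\[
(\pi_2)_*\bigl(\mathrm{ch}(\P)\cdot \pi_1^*\mathrm{td}(X)\bigr).
\]
Two contributions survive. The first is $(\pi_2)_*\bigl(\mathrm{ch}_1(\P)\cdot 2\pi_1^*[\mathrm{pt}]\bigr)=2\varphi$. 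The second is the pushforward of the $(4,2)$-K\"unneth component of $\mathrm{ch}_3(\P)=\tfrac{1}{6}\bigl(c_1(\P)^3-3c_1(\P)c_2(\P)\bigr)$, using $c_3(\P)=0$ since $\mathrm{rk}(\P)=2$. Expanding $c_1(\P)^3=(\pi_1^*h+\pi_2^*\varphi)^3$ with $h^2=4r[\mathrm{pt}]$ isolates the $(4,2)$-piece $12r\,\pi_1^*[\mathrm{pt}]\otimes\pi_2^*\varphi$; the top K\"unneth component $(r+2)\pi_1^*[\mathrm{pt}]\otimes 1$ of $c_2(\P)$ is read off from fiberwise Hirzebruch--Riemann--Roch, $\chi(E)=4+2r-c_2(E)=r+2$; and the middle part of $c_1(\P)c_2(\P)$ is handled via $\pi_1^*h\cdot c_2^{\mathrm{mid}}(\P)=\pi_1^*[\mathrm{pt}]\otimes\pi_2^*\psi$, which is precisely the defining property of $\psi$. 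Assembling these yields the pushforward contribution $\tfrac12\bigl((3r-2)\varphi-\psi\bigr)$, and adding $2\varphi$ gives the claimed $c_1(\E)=\tfrac{3r+2}{2}\varphi-\tfrac{\psi}{2}$.

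The main obstacle is the K\"unneth bookkeeping in the $\mathrm{ch}_3(\P)$ calculation and, in particular, routing the middle component of $c_2(\P)$ through the defining identity of $\psi$; the vanishings $c_3(\P)=0$ (forced by rank $2$) and $\mathrm{td}_1(X)=0$ (since $X$ is K3) are what keep the expression clean and force the answer to take the stated form.
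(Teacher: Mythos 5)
Your argument is correct. The computation of $c_1(\E)$ is essentially the paper's own: Grothendieck--Riemann--Roch applied to $\pi_2$ and $\P$, with the same three surviving contributions (the Todd-class term $2\varphi$, the piece $\tfrac16\cdot 12r\,\varphi$ from $c_1(\P)^3$, and $-\tfrac12\bigl((r+2)\varphi+\psi\bigr)$ from $c_1(\P)c_2(\P)$, where $c_2(E)=r+2$ comes from fibrewise Riemann--Roch and the middle K\"unneth piece is handled by the defining identity of $\psi$), and your bookkeeping checks out. For $c_1(\F)$ you take a slightly different route: the paper applies GRR to $\det(\P)$ to get $c_1\bigl((\pi_2)_*\bigwedge^2\P\bigr)=(2r+2)\varphi$ and then subtracts the contribution of the line subbundle restricting to $\OO_X$ on fibres (whose pushforward has $c_1=\varphi$), while you invoke see-saw to write $\bigwedge^2\P|_{C\times\widehat{X}}\cong p_C^*\omega_C\otimes p_2^*B$ with $c_1(B)=\varphi$, so that $\F\cong H^0(C,\omega_C)\otimes B$ directly. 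Both are valid; your see-saw version is arguably cleaner, as it avoids the GRR computation for $\det(\P)$ altogether and makes transparent why the answer is $\mathrm{rk}(\F)\cdot\varphi=(2r+1)\varphi$, whereas the paper's route records the same information through the exact sequence obtained by restricting $\det(\P)$ to $C\times\widehat{X}$.
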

\proof
We apply Grothendieck-Riemann-Roch to the map $\pi_2$ and the sheaf $\P$ using that $\bigl(R^i\pi_2)_*(\P)=0$ for $i=1,2$, since $H^1(X,E)=H^2(X, E)=0$, for
$[E]\in \widehat{X}$. We thus write

\begin{multline*}
c_1(\E)=c_1\Bigl((\pi_2)_*(\P)\Bigr)=(\pi_2)_*\Bigl[\Bigr(2+c_1(\P)+\frac{c_1^2(\P)-2c_2(\P)}{2}+\frac{c_1^3(\P)-3c_1(\P)c_2(\P)}{6}\Bigr)\cdot
\\
\Bigl(1+
\frac{\pi_1^*c_2(\Omega_X)}{12}\Bigr)\Bigr]_3=\frac{1}{12}(\pi_2)_*\bigl(c_1(\P)\cdot \pi_1^*c_2(\Omega_X)\bigr)+\frac{1}{6}(\pi_2)_*\bigl(c_1^3(\P)\bigr)-\frac{1}{2}(\pi_2)_*\bigl(c_1(\P)\cdot c_2(\P)\bigr).
\end{multline*}

Observe that $(\pi_2)_*\bigl(c_1(\P)\cdot \pi_1^*c_2(\Omega_X)\bigr)=(\pi_2)_*\bigl((\pi_1^*(h)+\pi_2^*(\varphi))\cdot \pi_1^*c_2(\Omega_X)\bigr)=24\varphi$. Furthermore, one also has
$$(\pi_2)_*\bigl(c_1^3(\P)\bigr)=(\pi_2)_*\bigl(3\pi_1^*(h^2)\cdot \pi_2^*\varphi\bigr)=6(g-1)=12r\cdot \varphi,$$
whereas using the K\"unneth decomposition $(\pi_2)_*\bigl(c_1(\P)\cdot c_2(\P))=(r+2)\varphi+\psi$. Substituting, we obtain the claimed formula for
$c_1(\E)$. The calculation of $c_1(\F)$ is analogous by applying Grothendieck-Riemann-Roch to the pushforward of $\mathrm{det}(\P)$ under $\pi_2$.
First we compute that $c_1\bigl((\pi_2)_*(\bigwedge^2 \P))=(2r+2)\cdot \varphi$, then from the exact sequence on $\widehat{X}$
$$0\longrightarrow (\pi_2)_*(\OO_{X\times \widehat{X}})\longrightarrow (\pi_2)_*(\P)\longrightarrow \F\longrightarrow 0,$$
since $c_1\bigl(\pi_2)_*\OO_{X\times \widehat{X}}\bigr)=\varphi$, we obtain $c_1(\F)=(2r+1)\varphi$, as claimed.
\endproof

\vskip 4pt

\noindent \emph{Proof of Theorem \ref{thm:classVoisin}.} One has a morphism of vector bundle $\phi\colon \bigwedge^2 \E\rightarrow \F$ over $\widehat{X}$, whose fibre over a point $[E]$ is precisely the determinant map $d\colon \bigwedge^2 H^0(X,E)\rightarrow H^0(C,\omega_C)$. Noting that $\mbox{rk}(\E)=r+2$ and $\mbox{rk}(\F)=2r+1$, via the terminology of Theorem \ref{thm:alt}, the resonance divisor $\mathfrak{Res}(\phi)$ of the morphism $\phi$ can be identified with the Voisin curve
$\mathcal{R}(\widehat{X}, C)$. Using Theorem \ref{thm:alt} we thus find
$$[\mathcal{R}(\widehat{X}, C)]=\frac{(2r)!}{r!\cdot (r+1)!}\Bigl(c_1(\F)-\frac{4r+2}{r+2}c_1(\E)\Bigr)=\frac{(2r)!}{r!\cdot (r+1)!} \Bigl(\frac{2r+1}{r+2}\psi-\frac{2r(2r+1)}{r+2}\varphi\Bigr),$$
which yields precisely the predicted formula.
\hfill $\Box$

\begin{rmk}
It is natural to conjecture that for a general $C\in |L|$, the singularities of the curve $\mathcal{R}_{\widehat{X}, C}$ are nodal. Proving this seems challenging even for small $r$.
\end{rmk}

\section{Gaussian Koszul modules and thickenings of algebraic varieties}

An important class of Koszul modules where the triviality of resonance is automatically  satisfied is given by the \emph{Gaussian maps} \cite{Wahl} on projective varieties. Let $L$ be a line bundle on a smooth complex projective variety $X$. The Gaussian Wahl map
$$\psi_L:\bigwedge^2 H^0(X,L)\rightarrow H^0(X,\Omega_X^1\otimes L^{2}),$$
is locally defined by $\psi_L(\sum_{i} f_i\wedge g_i):=\sum_i (f_i\cdot dg_i-g_i\cdot df_i)$, for  $f_i, g_i\in H^0(X,L)$.

\vskip 3pt

If $X$ is a smooth curve and $L=\mathcal{O}_X(1)$, the map $\psi_L$ is given by associating to a point  $p\in X$ the projectivized tangent line $\mathbf T_p(X)\in \Gr_2\bigl(H^0(X,L)^{\vee}\bigr)$ under the Pl\"ucker embedding of the Grassmannian of lines.

\begin{defn}\label{gaussian_module}
Set $V:=H^0(X,L)^{\vee}$ and $K^{\perp}:=\mbox{Ker}(\psi_L)$. The associated Koszul module
$$\mathcal{G}(X,L):=W(V,K),$$
is called the \emph{Gaussian module} of the pair $(X,L)$.
\end{defn}

Since $\psi_L(f\wedge g)=0$ if and only if $d\bigl(\frac{f}{g}\bigr)=0$, it follows that $\psi_L$ is injective on decomposable tensors, therefore $\mathcal{R}(V,K)=\{0\}$. In particular, $\mbox{rk}(\psi_L)\geq 2h^0(X,L)-3$. If $X$ is a smooth curve, the equality $\mbox{rk}(\psi_L)=2h^0(X,L)-3$ holds if and only if the image of $X$ under the linear system $|L|$ is a rational normal curve, see \cite[Theorem 1.3]{CilM}.

\vskip 4pt

We introduce the vector bundle $R_L$ defined by the  exact sequence
\begin{equation}\label{eq:xi-eta}
0 \lra R_L \overset{\iota}{\lra} H^0(X,L)\oo \mc{O}_X \overset{r}{\lra} J_1(L)\lra 0,
\end{equation}

where $J_1(L)$ is the first jet bundle of $L$. The map $r$ in (\ref{eq:xi-eta}) can be defined locally by $r(w) = (dw,w)$. We consider the exact sequence
\begin{equation}\label{eq:def-eta}
 0 \lra \Omega^1_X \oo L \lra J_1(L) \overset{p}{\lra} L \lra 0
\end{equation}
and observe that $p\circ r$ is the evaluation map $H^0(X,L) \oo \mc{O}_X \rightarrow L$. In particular, one also has the following exact sequence on $X$:
\begin{equation}\label{RLML}
0\lra R_L\lra M_L\lra \Omega_X^1 \otimes L\lra 0.
\end{equation}

In case $L$ is very ample and we consider the embedding $\varphi_L\colon X\hookrightarrow\bb{P}(V)$ defined by $V=H^0(X,L)$ and write $\I$ for the ideal sheaf of $X$ in this embedding, we have $R_L=N_L^{\vee}\otimes L= \I/\I^2 \oo L$.

\vskip 3pt

From (\ref{eq:def-eta}) we obtain an induced exact sequence
\begin{equation}\label{eq:seq-w2eta}
 0 \lra \Omega^2_X \oo L^{2} \lra \bw^2 J_1(L) \overset{a}{\lra} \Omega^1_X \oo L^{2} \lra 0,
\end{equation}
and consider the composition
\[ a\circ\bw^2 r \colon \ \bw^2 H^0(X,L) \oo \mc{O}_X \lra \Omega^1_X \oo L^{2}.\]
The induced map on global sections is the Gaussian map $\psi_L$.  Our goal is to give a cohomological interpretation of the graded components of the Koszul module $\mathcal{G}(V,K)$. To that end, we fix $q\geq 0$ and consider the composition
\begin{equation}\label{eq:ximap}
s\colon \ \Sym^{q+2} R_L \lra \Sym^{q+1} R_L \oo R_L \lra \Sym^{q+1} H^0(X,L) \oo R_L,
\end{equation}
where the first one is the natural inclusion, and the second map is $\Sym^{q+1}(\iota) \oo \op{id}_{R_L}$.

\begin{thm}\label{thm:koszul-Wahl}
 For each $q\geq 0$, the components of the Gausssian module $\mathcal{G}(X,L)$ are given by
 \[\mathcal{G}_q(X,L)^{\vee} = \ker\Bigl\{H^1\bigl(X,\Sym^{q+2}\ R_L\bigr) \overset{t}{\lra} \Sym^{q+1} H^0(X, L) \oo H^1(X, R_L)\Bigr\},\]
 where the map $t=H^1(X,s)$ is induced by (\ref{eq:ximap}).
\end{thm}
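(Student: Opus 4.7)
The strategy parallels the proof of Theorem~\ref{thm:det}: I would substitute the jet sequence (\ref{eq:xi-eta}) for the evaluation sequence of a vector bundle, and compensate for the fact that $\psi_L$ lands in $\Omega^1_X \oo L^2$ rather than in $\bw^2$ of a bundle by exploiting the projection $a$ of (\ref{eq:seq-w2eta}).

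First I would verify that $s$ is injective as a morphism of sheaves: fibrewise it is the composition of polarization (injective in characteristic zero) with $\Sym^{q+1}(\iota) \oo \op{id}_{R_L}$, which is injective because $\iota$ is. Setting $\mc{Q} := \coker(s)$, the long exact sequence coming from $0 \to \Sym^{q+2} R_L \to \Sym^{q+1} H^0(X,L) \oo R_L \to \mc{Q} \to 0$, combined with the vanishing $H^0(X, R_L) = 0$ (a consequence of the injectivity of the jet map $H^0(X, L) \hookrightarrow H^0(X, J_1(L))$, which follows from $L$ being very ample), yields $\ker H^1(s) \cong H^0(X, \mc{Q})$.

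The remaining and main task is to identify $H^0(X, \mc{Q})$ with $\mathcal{G}_q(X, L)^{\vee}$. Following the opening of the proof of Theorem~\ref{thm:det}, one may dualize and enlarge to realize $\mathcal{G}_q(X, L)^{\vee}$ as the middle cohomology of
\[\Sym^{q+2} H^0(X, L) \to H^0(X, L) \oo \Sym^{q+1} H^0(X, L) \to H^0(X, \Omega^1_X \oo L^2) \oo \Sym^q H^0(X, L).\]
Letting $\mc{E}^\bullet$ be the Eagon--Northcott resolution of $\Sym^{q+2} R_L$ supplied by (\ref{eq:xi-eta}) and $\mc{F}^\bullet$ the two-term resolution $[\Sym^{q+1} H^0(X, L) \oo H^0(X, L) \oo \mc{O}_X \to \Sym^{q+1} H^0(X, L) \oo J_1(L)]$ of $\Sym^{q+1} H^0(X, L) \oo R_L$ obtained by tensoring (\ref{eq:xi-eta}) with $\Sym^{q+1} H^0(X, L)$, a direct check on generators shows that polarization in degree $0$ together with the identity in degree $1$ assembles into a chain map $\phi \colon \mc{E}^\bullet \to \mc{F}^\bullet$ inducing $s$ on $H^0$; its mapping cone is therefore quasi-isomorphic to $\mc{Q}[0]$. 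Running the hypercohomology spectral sequence of $\op{Cone}(\phi)$ and using $a$ from (\ref{eq:seq-w2eta}) to recognize the relevant $d_1$-differential as the Gaussian Koszul differential, the $E_2$-term at total degree zero coincides with the middle cohomology displayed above, yielding the desired identification $H^0(X, \mc{Q}) = \mathcal{G}_q(X, L)^{\vee}$.

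The main obstacle is this final spectral-sequence step: one must trace carefully how the Eagon--Northcott differentials and the projection $a$ interact at the $E_2$-level to recover exactly the Gaussian Koszul map on the relevant entry, and confirm that the higher differentials emanating from that entry vanish so that the identification passes to $E_\infty$.
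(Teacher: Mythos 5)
Your overall strategy is in substance the paper's own argument repackaged in mapping-cone language: you compare the symmetric-power (Koszul) resolution of $\Sym^{q+2}R_L$ coming from (\ref{eq:xi-eta}) with the two-term resolution of $\Sym^{q+1}H^0(X,L)\otimes R_L$, lift $s$ to a chain map, and recover $\ker H^1(X,s)$ as $H^0$ of its cokernel; the steps you do spell out (injectivity of $s$, the vanishing $H^0(X,R_L)=0$, the chain map $\phi$, and $\op{Cone}(\phi)\simeq \mc{Q}[0]$) are correct. The genuine gap is in the final identification of the $E_2$-entry with $\mathcal{G}_q(X,L)^{\vee}$. The $d_1$-differential produced by the resolution takes values in $\Sym^{q}H^0(X,L)\otimes H^0\bigl(X,\bigwedge^2 J_1(L)\bigr)$, via $\beta:=H^0(X,\bigwedge^2 r)$, whereas the complex computing $\mathcal{G}_q(X,L)^{\vee}$ ends in $\Sym^qH^0(X,L)\otimes H^0(X,\Omega^1_X\otimes L^{2})$, via $\psi_L=H^0(X,a)\circ\beta$. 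Post-composing with $H^0(X,a)$ can only enlarge kernels, so a priori your $E_2^{0,0}$ computes the degree-$q$ dual Koszul module attached to the subspace determined by $\ker\beta\subseteq\ker\psi_L$, which by the functoriality (\ref{eq:wnat}) is only a subspace of $\mathcal{G}_q(X,L)^{\vee}$ and could be proper. To ``recognize the relevant $d_1$ as the Gaussian Koszul differential'' you need the nontrivial (if short) input that $H^0(X,a)$ is injective on the image of $\beta$, equivalently $\ker\psi_L=\ker\beta$: this is exactly the paper's Lemma \ref{lem:koszul-bw2eta}, proved by localizing at the generic point, writing $\beta(w\wedge w')=(dw\wedge dw',\,w\,dw'-w'\,dw)$ and differentiating the relation. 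Your proposal treats this as bookkeeping of differentials, but it is a missing mathematical ingredient without which the claimed equality does not follow.

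A second, smaller point: your convergence remark is aimed at the wrong place. The differentials emanating from the $(0,0)$ entry vanish for trivial bidegree reasons; what actually needs an argument is that $\mathbb{H}^0\bigl(\op{Cone}(\phi)\bigr)=H^0(X,\mc{Q})$ receives no contribution from the other graded piece $E_\infty^{-1,1}$, a subquotient of $\Sym^{q+2}H^0(X,L)\otimes H^1(X,\mc{O}_X)$ --- precisely the term the paper's diagram chase has to control when $H^1(X,\mc{O}_X)\neq 0$. It does vanish, because the relevant $d_1$ on that entry is the comultiplication $\Sym^{q+2}H^0(X,L)\to\Sym^{q+1}H^0(X,L)\otimes H^0(X,L)$ tensored with $\mathrm{id}_{H^1(X,\mc{O}_X)}$, which is injective in characteristic zero; but this verification must be stated, and your proposal does not address it.
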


\noindent To prove the theorem we first show that $K^{\perp}$ is also equal to $\ker\bigl(H^0(X,\bw^2 r)\bigr)$:

\begin{lemma}\label{lem:koszul-bw2eta}
The restriction of the map $\alpha=H^0(X,a)$ to the image of $\beta=H^0(X,\bw^2 r)$ is injective. In particular we have that $\ker(\psi_L) = \ker(\beta)$.
\end{lemma}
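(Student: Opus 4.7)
The plan is to prove the following equivalent strengthening: for any $\sigma \in \bw^2 H^0(X,L)$, if $\psi_L(\sigma)=0$, then $\bw^2 r(\sigma)=0$ as a global section of $\bw^2 J_1(L)$. Since by construction $\psi_L = \alpha \circ \beta$ on global sections, this will immediately give the injectivity of $\alpha|_{\im(\beta)}$ and, together with the tautological inclusion $\ker(\beta) \subseteq \ker(\psi_L)$, the equality $\ker(\psi_L)=\ker(\beta)$.

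My approach would be to argue locally. Cover $X$ by opens $U$ on which $L$ admits a trivializing section $s$. Writing sections of $L|_U$ as $F \cdot s$ with $F \in \mc{O}_X(U)$, the trivialization induces a local splitting of the jet exact sequence (\ref{eq:def-eta}) by $j^1(Fs) \mapsto (dF \oo s,\, Fs)$, and hence a local decomposition
\[\bw^2 J_1(L)|_U \;\cong\; (\Omega^2_X \oo L^2)|_U \,\oplus\, (\Omega^1_X \oo L^2)|_U\]
compatible with (\ref{eq:seq-w2eta}) in the sense that the projection onto the second summand coincides with $a|_U$. Writing $\sigma = \sum_i (F_i s) \wedge (G_i s)$ over $U$ and computing the wedges of the local expressions $r(F_i s) = (dF_i \oo s,\, F_i s)$ in this splitting, a direct calculation yields
\[\bw^2 r(\sigma)|_U \;=\; \Bigl(\sum_i dF_i \wedge dG_i\Bigr) s^2 \,\oplus\, \Bigl(\sum_i (G_i\, dF_i - F_i\, dG_i)\Bigr) s^2.\]

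By the defining formula $\psi_L(f\wedge g) = f\, dg - g\, df$, the second summand is $-\psi_L(\sigma)|_U$, which vanishes by hypothesis; hence the local $1$-form $\omega := \sum_i (G_i\, dF_i - F_i\, dG_i)$ is identically zero on $U$. Taking its exterior derivative gives $0 = d\omega = -2\sum_i dF_i \wedge dG_i$, so the first summand also vanishes on $U$. Consequently $\bw^2 r(\sigma)$ vanishes on each trivializing open, and so $\beta(\sigma) = 0$ as a global section of $\bw^2 J_1(L)$.

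The only delicate point is the bookkeeping: one must confirm that the local splitting of $\bw^2 J_1(L)$ is compatible with $a$ and that the wedge product computation in coordinates really produces the stated two summands. I anticipate no deeper obstacle — conceptually the proof rests on the elementary identity $d(G\, dF - F\, dG) = -2\, dF \wedge dG$, which converts the vanishing of the ``cross term'' (where $\psi_L$ lives) into the vanishing of the ``symmetric term'' (the $\Omega^2_X \oo L^2$ component of $\bw^2 r(\sigma)$).
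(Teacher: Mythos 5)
Your proposal is correct and follows essentially the same route as the paper's proof: both reduce to a local trivialization of $L$ (the paper localizes at the generic point, you use a trivializing cover, which is an immaterial difference), compute $\bw^2 r$ of a decomposable tensor as the pair $\bigl(dF\wedge dG,\ G\,dF-F\,dG\bigr)$ up to sign, and recover the vanishing of the $\Omega^2_X\otimes L^2$-component from that of the $\Omega^1_X\otimes L^2$-component via the identity $d\bigl(\sum_i(G_i\,dF_i-F_i\,dG_i)\bigr)=-2\sum_i dF_i\wedge dG_i$ (characteristic zero). No issues.
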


\begin{proof}
 Since all the sheaves involved are locally free, it suffices to localize at the generic point of $X$ and show that $\alpha$ is injective on the image of $\beta$ there. In particular we may choose a local generator of $L$ and identify elements of $W$ with rational functions on $X$. We have that $\beta(w \wedge w') = (dw \wedge dw',w\cdot dw' - w'\cdot dw)$
 and $\alpha$ is the projection onto the second component. If $x=\sum_i w_i\cdot w_i'\in \ker(\alpha\circ\beta)$, then
 \[ 0 = \alpha(\beta(x)) = \sum_i (w_i \cdot dw_i' - w_i' \cdot dw_i).\]
 Differentiating this equality shows that $\sum_i dw_i \wedge dw_i'=0$, so that $\beta(x)=0$, that is, $\alpha$ is injective on $\op{Im}(\beta)$, as desired.
\end{proof}

Using (\ref{eqn:W}) and Lemma~\ref{lem:koszul-bw2eta} we get that $\mathcal{G}_q(X,L)^{\vee}$ is the middle cohomology of
\begin{equation}\label{eq:complex-Wq-Wahl}
\Sym^{q+2}H^0(L) \lra \Sym^{q+1}H^0(L) \oo H^0(L) \lra \Sym^q H^0(L) \oo H^0\bigl(X, \bw^2 J_1(L)\bigr).
\end{equation}
The second map takes the differential $\Sym^{q+1}H^0(L) \oo H^0(L) \to \Sym^q H^0(L) \oo \bw^2 H^0(L)$ and composes it with $\op{id}_{\Sym^q H^0(L)} \oo H^0(X,\bw^2 r)$.

\begin{proof}[Proof of Theorem~\ref{thm:koszul-Wahl}] It follows from (\ref{eq:xi-eta}) that for $q\geq 0$ we have an exact sequence
\begin{equation}\label{eq:resolution-Symxi}
0 \to \Sym^{q+2} R_L \to \Sym^{q+2} H^0(L) \oo \mc{O}_X \to \Sym^{q+1} H^0(L) \oo J_1(L) \twoheadrightarrow \Sym^q H^0(L) \oo \bw^2 J_1(L).
\end{equation}
Dropping the first term and taking global sections we obtain the middle row of the commutative diagram
\[
\xymatrix{
\Sym^{q+2} H^0(L) \ar[r] \ar@{=}[d] & \Sym^{q+1} H^0(L) \oo H^0(L) \ar[r] \ar[d]^{\op{id} \oo\,H^0(X,r)} & \Sym^q H^0(L) \oo H^0\bigl(\bw^2 J_1(L)\bigr) \ar@{=}[d] \\
\Sym^{q+2} H^0(L) \ar[r] \ar[d] & \Sym^{q+1} H^0(L) \oo H^0(J_1(L)) \ar[r] \ar[d]^{\op{id} \oo\,\delta} & \Sym^q H^0(L) \oo H^0\bigl(\bw^2 J_1(L)\bigr) \ar[d] \\
0 \ar[r] & \Sym^{q+1} H^0(X, L) \oo H^1(X, R_L) \ar[r] & 0 \\
}
\]
where the first row of the diagram is given by (\ref{eq:complex-Wq-Wahl}), and $\delta$ is the connecting homomorphism associated with the long exact sequence in cohomology of (\ref{eq:xi-eta}). Since $p\circ r$ is the evaluation map $H^0(X,L) \oo \mc{O}_X \rightarrow L$, we get that  $H^0(X,r)$ is injective.

\vskip 3pt

If we write $H$ for the middle homology of the middle row of the above diagram, it follows from (\ref{eq:complex-Wq-Wahl}) that
\[ \mathcal{G}_q(X,L)^{\vee} = \ker \Bigl\{H \overset{u}{\lra} \Sym^{q+1}H^0(X,L) \oo H^1(X,R_L)\Bigr\},\]
where the map $u$ is induced by $\op{id} \oo\,\delta$. Just as in Theorem~2.8, we shall realize $H$ as
\begin{equation}\label{eq:C-is-ker}
 H = \ker\Bigl\{H^1\bigl(X,\Sym^{q+2}R_L\bigr) \overset{v}{\lra} \Sym^{q+2} H^0(X,L) \oo H^1(X,\mc{O}_X)\Bigr\},
\end{equation}
so $H$ can be thought of as a subgroup of $H^1\bigl(X,\Sym^{q+2}R_L\bigr)$. Under this identification, we claim that $u$ is the restriction of $t$ to $H$. Moreover, $v$ factors through $t$, therefore $H = \ker(v) \supseteq \ker(t)$ and $\ker(u)=\ker(t)$ as desired.

\vskip 3pt

In order to see that $v$ factors through $t$, we consider the diagram
\[
\xymatrix{
\Sym^{q+2}R_L \ar[rr]^{\Sym^{q+2}(\iota)} \ar[dr]_s & & \Sym^{q+2}H^0(X,L) \oo \mc{O}_X \\
& \Sym^{q+1}H^0(X,L) \oo R_L \ar[ur]_o & \\
}
\]
(commutative up to multiplication by a non-zero scalar) where the map $o$ is induced by~$\iota$ and the multiplication $\Sym^{q+1}H^0(X,L) \oo H^0(X,L) \lra \Sym^{q+2} H^0(X,L)$. Since $v=H^1(X,\Sym^{q+2}(\iota))$ and $t=H^1(X,s)$, it follows that $v$ factors through $t$.

\vskip 3pt

To prove the assertion (\ref{eq:C-is-ker}) and that $u=t_{|H}$, we split (\ref{eq:resolution-Symxi}) into short exact sequences
\[0 \lra \Sym^{q+2}R_L \lra \Sym^{q+2}H^0(X,L) \oo \mc{O}_X \lra M \lra 0,\mbox{ and }\]
\[0\lra M \overset{j}{\lra} \Sym^{q+1}H^0(X,L) \oo J_1(L) \lra \Sym^q H^0(X,L) \oo \bw^2 J_1(L) \lra 0.\]
By construction, $H$ is the cokernel of the map $\Sym^{q+2}H^0(X,L)\rightarrow H^0(X,M)$, which is the same as the kernel of
\[H^1(X,\Sym^{q+2}R_L) \lra  \Sym^{q+2}H^0(X,L) \oo H^1(X,\mc{O}_X).\]

We consider the commutative diagram (where $\Delta$ is the natural inclusion)
\[
\xymatrix{
  \Sym^{q+2} R_L \ar[r] \ar[d]_s & \Sym^{q+2}H^0(L) \oo \mc{O}_X \ar[r] \ar[d]_{\Delta} & M \ar[d]_j  \\
 \Sym^{q+1} H^0(L) \oo R_L \ar[r] & \Sym^{q+1}H^0(L) \oo H^0(L) \oo \mc{O}_X \ar[r] & \Sym^{q+1}H^0(L) \oo J_1(L)   \\
}
\]
which gives rise by taking cohomology to a commutative diagram
\[
\xymatrix{
 H^0(X,M) \ar[d] \ar[r] & H^1(X, \Sym^{q+2}R_L) \ar[d]^t \\
 \Sym^{q+1}H^0(X,L) \oo H^0(X,J_1(L)) \ar[r]^{\op{id} \oo\,\delta} & \Sym^{q+1}H^0(X,L) \oo H^1(X,R_L) \\
}
\]
Since $u$ was induced by $\op{id} \oo\,\delta$, it follows that after identifying $H$ with a subgroup of $H^1(X, \Sym^{q+2} R_L)$ we get that $u$ is the restriction of $t$, concluding the proof.
\end{proof}

Theorem \ref{thm:koszul-Wahl} has a more transparent geometric interpretation under suitable assumptions.

\begin{cor}\label{cor:wahl}
Let $L$ be a very ample line bundle on a smoooth projective variety $X$ such that $q(X)=0$ and $H^0(X,\Omega_X^1\otimes L)=0$. Then
$H^1\bigl(X,\mathrm{Sym}^b R_L\bigr)=0$ for all $b\geq r(L)$.
\end{cor}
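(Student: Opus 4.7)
The plan is to chain together the main vanishing theorem for Koszul modules with vanishing resonance (Theorem~\ref{thm:mainresult}) and the cohomological description of the Gaussian Koszul module (Theorem~\ref{thm:koszul-Wahl}), after first eliminating the obstruction $H^1(X,R_L)$ using the two hypotheses $q(X)=0$ and $H^0(X,\Omega_X^1\otimes L)=0$.

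The first step is to compute $H^1(X,R_L)$ and show it vanishes under the assumptions. I would apply $H^{\bullet}(X,-)$ to the short exact sequence $0\to M_L\to H^0(X,L)\otimes\mathcal{O}_X\to L\to 0$ defining the kernel bundle $M_L$: since the coboundary is controlled by $H^0(X,L)\otimes H^1(X,\mathcal{O}_X)=0$ (as $q(X)=0$) and the evaluation map on global sections is the identity, this forces $H^1(X,M_L)=0$. Then feed this into the long exact sequence coming from $(\ref{RLML})$, namely $H^0(X,\Omega_X^1\otimes L)\to H^1(X,R_L)\to H^1(X,M_L)$. The left term vanishes by hypothesis, the right vanishes by what we just showed, so $H^1(X,R_L)=0$.

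The second step is to apply Theorem~\ref{thm:koszul-Wahl}. With $H^1(X,R_L)=0$ the target of the map $t$ in the statement is zero, so the kernel is everything, and we obtain the identification
\[\mathcal{G}_q(X,L)^{\vee}\cong H^1\bigl(X,\Sym^{q+2}R_L\bigr)\quad\text{for every }q\geq 0.\]

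The third step is to invoke Theorem~\ref{thm:mainresult}. The Gaussian module $\mathcal{G}(X,L)$ has automatically trivial resonance (noted right after Definition~\ref{gaussian_module}, because $\psi_L$ is injective on decomposable tensors). With $n:=h^0(X,L)=\dim V$, the vanishing portion of Theorem~\ref{thm:mainresult} yields $\mathcal{G}_q(X,L)=0$ for all $q\geq n-3$. Substituting $b=q+2$ and combining with Step~2 gives $H^1(X,\Sym^b R_L)=0$ for every $b\geq n-1=r(L)$, which is exactly the claimed statement.

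There is no real obstacle: the argument is a three-line diagram chase plus citations, and all ingredients are either proved earlier in the paper or are standard. The only subtlety worth highlighting is that both hypotheses $q(X)=0$ and $H^0(X,\Omega_X^1\otimes L)=0$ are used only in Step~1, to make the error term $H^1(X,R_L)$ (which governs the defect between the Koszul module and the genuine symmetric-power cohomology in Theorem~\ref{thm:koszul-Wahl}) disappear; once this is arranged, the sharp vanishing range in Theorem~\ref{thm:mainresult} yields the corollary with the optimal bound $b\geq r(L)$.
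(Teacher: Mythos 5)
Your proposal is correct and follows exactly the paper's argument: use $q(X)=0$ to get $H^1(X,M_L)=0$, feed this and $H^0(X,\Omega_X^1\otimes L)=0$ into the long exact sequence of (\ref{RLML}) to kill $H^1(X,R_L)$, and then combine Theorem~\ref{thm:koszul-Wahl} (which now identifies $\mathcal{G}_q(X,L)^{\vee}$ with $H^1(X,\Sym^{q+2}R_L)$) with the vanishing range of Theorem~\ref{thm:mainresult} applied to the resonance-free Gaussian module. No gaps; the bound $b\geq r(L)$ comes out exactly as you state.
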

\begin{proof}
The hypothesis $q(X)=0$ implies $H^1(X,M_L)=0$. From the exact sequence (\ref{RLML}), we obtain that
$H^0(X,\Omega_X^1\otimes L)\cong H^1(X,R_L)$, therefore $H^1(X,R_L)=0$ as well. The conclusion now follows by applying
Theorem \ref{thm:koszul-Wahl} coupled with Theorem \ref{thm:mainresult}.
\end{proof}

One can reformulate these results in terms of stabilization of cohomology on the successive thickenings of the subvariety $X\subseteq \PP^r$. For $b\geq 0$, we denote by $X_b\subseteq \PP^r$ the subscheme defined by the ideal $\I^{b+1}\subseteq \OO_{\PP^r}$, thus we have a system of subschemes
$$X=X_0\rightarrow X_1\rightarrow \cdots \rightarrow X_{b-1}\rightarrow X_{b}\rightarrow \cdots.$$

\noindent  \emph{Proof of Theorem \ref{infneigh}}. Suppose the projective variety $X\subseteq \PP^r$ is embedded by the line bundle $L:=\OO_X(1)$. Then
for each $b\geq 1$, one has the short exact sequence
\begin{equation}\label{infseq}
0\lra \mathrm{Sym}^b N_L^{\vee}\lra \OO_{X_b}\lra \OO_{X_{b-1}}\lra 0.
\end{equation}
Since $R_L=N_L^{\vee}\otimes L$, tensoring this exact sequence with $L^b$ and taking cohomology, we obtain from Corollary \ref{cor:wahl}
that the map $H^0\bigl(X_b,\OO_{X_b}(b)\bigr)\rightarrow H^0\bigl(X_{b-1}, \OO_{X_{b-1}}(b)\bigr)$ is surjective for $b\geq r=r(L)$.
The map is also injective, for $H^0(X,\Sym^b R_L)=0$, because of the injectivity of the map $\Sym^{b} H^0(X,L)\rightarrow \Sym^{b-1} H^0(X,L)
\otimes H^0(X, J_1(L))$, where we use that our assumptions force the map $H^0(r): H^0(X,J_1(L))\rightarrow H^0(X,L)$ induced by the sequence (\ref{eq:xi-eta}) to be an isomorphism.

\vskip 3pt

We assume now that $0\leq a<b$ and set $c:=b-a\geq 1$. To complete the proof we have to show that $H^i(X,\Sym^b R_L\otimes L^{-c})=0$, for $i=0,1$.
To that end, we use the notation from the proof of Theorem \ref{thm:koszul-Wahl}. By Kodaira vanishing $H^1(X,L^{-c})=0$, thus it follows that
there is a surjection $H^0(X, M\otimes L^{-c})\twoheadrightarrow H^1(X,\Sym^b R_L\otimes L^{-c})$. Furthermore, we have an injection
$H^0(X,M\otimes L^{-c})\hookrightarrow \Sym^{b-1} H^0(X,L)\otimes H^0\bigl(X, J_1(L)\otimes L^{-c}\bigr)$.

\vskip 3pt

In order to show that this last cohomology group vanishes, we use the sequence (\ref{eq:def-eta}). Since $H^0(X,\Omega_X^1)=0$, clearly $H^0(X,J_1(L)\otimes L^{-c})=0$, for $c\geq 2$.
For $c=1$, the existence of a non-zero section in $H^0(X,J_1(L)\otimes L^{\vee})$ implies that the sequence (\ref{eq:def-eta}) is split. But this is impossible, for it is known that the \emph{Atiyah class} $\eta_L\in \mbox{Ext}^1(L,\Omega_X^1 \otimes L)\cong H^1(X,\Omega_X^1)$ expressing
$J_1(L)$ as an extension in the sequence  (\ref{eq:def-eta}) equals precisely the Chern class $c_1(L)$. Since $L$ is very ample, this class cannot be zero.

\vskip 3pt

Finally, in order to show that $H^0(X, \Sym^b R_L\otimes L^{-c})=0$, observe that one has an injection $H^0(X,\Sym^b R_L\otimes L^{-c})\hookrightarrow \Sym^b H^0(X,L)\otimes H^0(X, L^{-c})$.

\hfill $\Box$

\subsection{Ramification divisors of canonical pencils.}

We now prove Theorem \ref{thm:canpencil}. We use throughout the standard notation \cite{AC} for the tautological and boundary classes on $\mm_{g,n}$. We consider the universal curve $\pi\colon \mm_{g,n+1}\rightarrow \mm_{g,n}$ endowed with its $n$ tautological sections
whose images we identify with the boundary divisors $\Delta_{0:i, n+1}$ on $\mm_{g,n}$ for $i=1, \ldots, n$. We consider the Hodge bundle
$\E:=\pi_*(\omega_{\pi})$ and the rank $n$ vector bundle
$$\F:=\pi_*\bigl(\omega^3_{\pi|\Delta_{0:1, n+1}+\cdots+\Delta_{0:n,n+1}}\bigr).$$

One has a morphism $\phi\colon \bigwedge^2 \E\rightarrow \F$ which fibrewise is given by the composition
$$0\lra \bigwedge^2 H^0(C, \omega_C)\stackrel{\psi_{\omega_C}}\lra H^0(C, \omega_C^3)\stackrel{\mathrm{res}}\lra H^0\bigl(C, \omega^3_{C|x_1+\cdots+x_n}\bigr).$$

Observe that $\phi_{[C, x_1, \ldots, x_n]}(s_1\wedge s_2)=0$ for $0\neq s_1\wedge s_2\in \bigwedge^2 H^0(C, \omega_C)$ if and only if $x_1+\cdots+
x_n$ lies in the ramification divisor of the cover $C\rightarrow \PP^1$ induced by $s_1$ and $s_2$.

For our next result, recall that $\psi_i$ denotes the class of the line bundle on $\mm_{g,n}$ having as fibre over a point
$[C, x_1, \ldots, x_n]$ the cotangent space $T_{x_i}^{\vee}(C)$, for $i=1, \ldots, n$.

\begin{prop}\label{prop:ramF}
One has $c_1(\F)=3(\psi_1+\cdots+\psi_n)$.
\end{prop}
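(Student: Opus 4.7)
The plan is to split $\F$ into a direct sum of line bundles, one for each marked point, and compute each summand via the corresponding tautological section. Write $\Delta := \sum_{i=1}^{n} \Delta_{0:i,n+1}$ for the divisor to which $\omega_\pi^3$ is restricted. The first thing I would do is verify that the boundary divisors $\Delta_{0:i,n+1}$ and $\Delta_{0:j,n+1}$ are pairwise disjoint for $i \ne j$: a common point would require the marked point $x_{n+1}$ to lie simultaneously on two distinct rational bridges of a stable $(n+1)$-pointed curve, contradicting the fact that $x_{n+1}$ is a smooth marked point belonging to a unique irreducible component. Once disjointness is in place, the structure sheaf of $\Delta$ decomposes as $\mathcal O_\Delta = \bigoplus_{i=1}^{n} \mathcal O_{\Delta_{0:i,n+1}}$, and applying $\pi_*$ we obtain
\[ \F \;\cong\; \bigoplus_{i=1}^{n}\pi_*\bigl(\omega_\pi^3\otimes \mathcal O_{\Delta_{0:i,n+1}}\bigr). \]

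The second step is to compute each summand. Using the identification $\Delta_{0:i,n+1} = \sigma_i(\mm_{g,n})$ supplied in the paper, the restriction $\pi|_{\Delta_{0:i,n+1}} \colon \Delta_{0:i,n+1}\to \mm_{g,n}$ is an isomorphism whose inverse is the $i$-th tautological section $\sigma_i$. Hence pushforward by $\pi$ of a sheaf supported on $\Delta_{0:i,n+1}$ coincides with pullback by $\sigma_i$, so
\[ \pi_*\bigl(\omega_\pi^3\otimes \mathcal O_{\Delta_{0:i,n+1}}\bigr) \;\cong\; \sigma_i^*(\omega_\pi^3)\;=\;(\sigma_i^*\omega_\pi)^{\otimes 3}. \]
By the very definition of the cotangent class on $\mm_{g,n}$, one has $c_1(\sigma_i^*\omega_\pi)=\psi_i$, so the $i$-th summand contributes $3\psi_i$ to $c_1(\F)$.

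Putting the two steps together and using additivity of the first Chern class in direct sums yields $c_1(\F) = 3(\psi_1+\cdots+\psi_n)$, as asserted. The only delicate point in this plan is the disjointness claim in the first step — a stability check on $(n+1)$-pointed curves — and this is really the geometric substance of the proposition; the remaining identifications are formal once the decomposition of $\mathcal O_\Delta$ has been established.
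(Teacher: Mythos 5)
Your proof is correct, and it takes a genuinely different route from the paper. The paper computes $c_1(\F)$ indirectly, via the four-term exact sequence relating $\F$ to $\pi_*\bigl(\omega_\pi^3\bigr)$ and $\pi_!\bigl(\omega_\pi^3(-\sum_i\Delta_{0:i,n+1})\bigr)$, using Mumford's formula $c_1\bigl(\pi_*(\omega_\pi^3)\bigr)=\lambda+3\kappa$ and a Grothendieck--Riemann--Roch computation for the twisted sheaf (which is where the relations $\pi_*(\delta_{0:i,n+1}\cdot\delta_{0:j,n+1})=0$ and $\pi_*(\delta_{0:i,n+1}^2)=-\psi_i$ enter). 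You instead identify the bundle $\F$ itself: since the $\Delta_{0:i,n+1}$ are the pairwise disjoint images of the tautological sections $\sigma_i$, and $\pi$ restricts to an isomorphism on each, one gets $\F\cong\bigoplus_{i=1}^n\sigma_i^*\bigl(\omega_\pi^{\otimes 3}\bigr)$, and $c_1(\sigma_i^*\omega_\pi)=\psi_i$ is the definition of the cotangent class. This is more elementary (no GRR, no Mumford formula) and yields a stronger statement, namely the splitting of $\F$ into line bundles, from which the class is immediate. One small refinement: your disjointness argument is phrased in terms of ``rational bridges,'' which literally describes only the generic point of $\Delta_{0:i,n+1}$ (in the closure the genus-zero piece may degenerate); the cleanest justification is the one your own setup already provides, namely that $\Delta_{0:i,n+1}=\sigma_i(\mm_{g,n})$ and distinct sections have disjoint images because the marked points $x_i\neq x_j$ on any stable pointed curve (equivalently, any curve in the intersection would carry two genus-zero tails each containing $x_{n+1}$ and excluding a marking of the other, which the tree structure of a nodal curve forbids). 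With that said, the argument is complete and correct.
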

\proof
One uses the following exact sequence of sheaves on the universal curve $\mm_{g,n+1}$:
\begin{equation}\label{eq:Fseq}
0\lra \pi_*\Bigl(\omega_{\pi}^3(-\sum_{i=1}^{n}\Delta_{0:i,n+1})\Bigr)\lra \pi_*\bigl(\omega_{\pi}^3\bigr)\lra \F\lra
R^1\pi_*\Bigl(\omega_{\pi}^3(-\sum_{i=1}^{n}\Delta_{0:i,n+1})\Bigr)\lra 0.
\end{equation}
On the one hand we use that $c_1\bigl(\pi_*(\omega_{\pi}^3)\bigr)=\lambda+3\kappa$, where $\kappa:=\pi_*\bigl(c_1^2(\omega_{\pi})\bigr)$, see \cite{AC}, on the other hand after an easy application of Grothendieck-Riemann-Roch we can write:
\begin{multline*}
c_1\Bigl(\pi_{!}\bigl(\omega_{\pi}^3(-\sum_{i=1}^n \delta_{0:i, n+1})\bigr)\Bigr)=\pi_*\Bigr[(1+3c_1(\omega_{\pi})-\sum_{i=1}^n \delta_{0:i, n+1}+\frac{\bigl(3c_1(\omega_{\pi})-\sum_{i=1}^n \delta_{0:i,n+1}\bigr)^2}{2}\Bigr)\cdot\\
\Bigl(1-\frac{c_1(\omega_{\pi})}{2}+\frac{c_1^2(\omega_{\pi})+c_2(\Omega_{\pi})}{12}\Bigr)\Bigr]_2=\lambda+3\kappa-3\sum_{i=1}^n\psi_i,
\end{multline*}
where we have used the formulas $\pi_*(\delta_{0:i, n+1}\cdot \delta_{0:j,n+1})=0$ for $i\neq j$, $\pi_*(\delta_{0:i, n+1}^2)=-\psi_i$, as well as the fact that $c_2(\Omega_{\pi})$ can be identified with the codimension $2$ locus of nodes inside $\mm_{g,n+1}$, hence $\pi_*\bigl(c_2(\Omega_{\pi})\bigr)=\delta$, this being the class of the total boundary of $\mm_{g,n}$. This leads to the claimed formula by using the sequence (\ref{eq:Fseq}).
\endproof

\vskip 3pt

\noindent \emph{Proof of Theorem \ref{thm:canpencil}.} We apply Theorem \ref{thm:alt} to the morphism  $\phi\colon \bigwedge^2 \E\rightarrow \F$ on vector bundles on $\mm_{g,2g-3}$, using Proposition \ref{prop:ramF} and that $c_1(\E)=\lambda$.
\hfill $\Box$

\section{Resonance, stability and split bundles}
\label{section:split}

In this section, we prove that important intrinsic properties of bundles of sufficiently large degree on a curve, such as instability or being split, can be read off its resonance. We use the following notation, for a given vector bundle $E$ on a curve $C$ and an integer~$k$

\[
\mathcal{R}_{\ge k}(C,E):=\bigl\{a\in \mathcal{R}(C,E): \exists L\subseteq E \mbox{ line bundle, }\mathrm{deg}(L)\ge k, h^0(L)\ge 2, a\in H^0(L)\bigr\}
\]

By projectivization, these closed loci provide us with a stratification of the projectivized resonance. Indeed, $\mathbf{R}_{\ge k}(C,E)\supseteq \mathbf{R}_{\ge (k+1)}(C,E)$, $\mathbf{R}_{\ge k}(C,E)=\emptyset$ for $k\gg0$, and $\mathbf{R}_{\ge d}(C,E)=\mathbf{R}(C,E)$ if $d$ equals te gonality of the curve. We call this stratification the \emph{degree stratification}.

\begin{thm}
	\label{thm:split}
	Let $E$ be a globally generated rank $2$ vector bundle on a smooth curve $C$ of genus $g\ge 1$.
	Assume that $\mathrm{deg}(E)\ge 4g+1$ and $H^1(C,E)=0$. Then
	
	\begin{itemize}
		\item[(i)]
		$E$ is not stable (respectively unstable) if and only if  $H^0(C,E)^\vee$ has an isotropic subspace of dimension at least $\frac{h^0(E)}{2}$ (respectively $>\frac{h^0(E)}{2}$).
		\item[(ii)]
		$E$ splits as a sum of line bundles $E=N\oplus M$ with $h^0(N\otimes M^\vee)=h^0(M\otimes N^\vee)=0$ if and only if there exist an integer $k$ and isotropic subspaces $V_1^\vee,V_2^\vee\subseteq H^0(C,E)^\vee$ of dimension $\ge 2$ such that $H^0(C,E)^\vee=V_1^\vee\oplus V_2^\vee$ and $\mathcal{R}_{\ge k}(C, E)= V_1^\vee\cup V_2^\vee$.
	\end{itemize}
\end{thm}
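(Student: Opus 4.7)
My plan rests on the dictionary from Lemma~\ref{lem:isotropy}(i), which identifies isotropic subspaces $W\subseteq H^0(C,E)$ with subspaces lying inside $H^0(C,L)$ for some line subbundle $L\subseteq E$ (take $L$ to be the saturation of the rank-one subsheaf generated by $W$). Under $\deg(E)\ge 4g+1$ and $H^1(C,E)=0$, Riemann--Roch yields $h^0(C,E)=\deg(E)-2g+2\ge 2g+3$, so in particular $h^0(E)/2>g$. For part~(i), I would argue that any line subbundle $L\subseteq E$ with $h^0(L)\ge h^0(E)/2$ must be non-special (otherwise Clifford forces $h^0(L)\le g$), hence $h^0(L)=\deg(L)-g+1$ and the inequality $h^0(L)\ge h^0(E)/2$ translates into $\deg(L)\ge\mu(E)=\deg(E)/2$, i.e.~$L$ destabilizes $E$. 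Conversely, a destabilizing $L$ has $\deg(L)\ge(4g+1)/2>2g-1$, so it is non-special with $h^0(L)\ge h^0(E)/2$; the strict-inequality version characterizing instability is identical.

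For the forward direction of~(ii), if $E=N\oplus M$ with $h^0(N\otimes M^\vee)=h^0(M\otimes N^\vee)=0$, I would first observe that $N,M$ are both non-trivial globally generated line bundles (non-trivial since otherwise one of the Hom line bundles would have nonzero sections), so $h^0(N),h^0(M)\ge 2$, and setting $V_1^\vee:=H^0(N)$, $V_2^\vee:=H^0(M)$ produces the required isotropic direct sum decomposition of $H^0(E)$. Taking $k:=\min(\deg N,\deg M)$, for any line subbundle $L\subseteq E$ with $\deg L\ge k$ and $h^0(L)\ge 2$ I would examine the two projections $L\to N$, $L\to M$: if both are nonzero then $\deg L\le k$ forces $\deg L=k$, and the nonzero map $L\to M$ (say) is an isomorphism, turning the other projection into a nonzero Hom $M\to N$ and contradicting the no-Hom assumption. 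Hence one projection vanishes, $L=N$ or $L=M$ by saturation, and $\mathcal{R}_{\ge k}(C,E)=V_1^\vee\cup V_2^\vee$.

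For the converse of~(ii), Lemma~\ref{lem:isotropy}(i) produces saturated line subbundles $L_1,L_2\subseteq E$ with $V_i^\vee\subseteq H^0(L_i)$; since distinct saturated line subbundles intersect trivially we get $H^0(L_1)\cap H^0(L_2)=0$, and a dimension count using $V_1^\vee\oplus V_2^\vee=H^0(E)$ forces $V_i^\vee=H^0(L_i)$ and $L_1\ne L_2$. The decisive step is to invoke global generation of $E$ fibrewise: at any $p\in C$, if $L_1(p)=L_2(p)$ as lines in $E(p)$ then evaluation $H^0(E)=H^0(L_1)\oplus H^0(L_2)\to E(p)$ lands entirely in this common line, contradicting surjectivity; hence $L_1(p)\oplus L_2(p)=E(p)$ for all $p$, so $L_1\oplus L_2\to E$ is a fibrewise isomorphism and therefore $E=L_1\oplus L_2$. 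For the no-Hom condition I would first note $\deg L_i\ge k$ (since $V_i^\vee\subseteq\mathcal{R}_{\ge k}(C,E)$ and the only saturated line subbundle of $E$ containing a nonzero section of $L_i$ is $L_i$), and then run a graph argument: a nonzero $\phi\colon L_1\to L_2$ would produce the saturated line subbundle $L_\phi:=\{(x,\phi(x))\}\subseteq L_1\oplus L_2=E$ of degree $\deg L_1\ge k$ whose space of sections is the graph of the induced injective map $V_1^\vee\to V_2^\vee$---a linear subspace of dimension $\ge 2$ meeting each $V_i^\vee$ only at $0$, hence not contained in $V_1^\vee\cup V_2^\vee$, contradicting the resonance equality. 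The hard part of the proof will be precisely this converse, combining global generation (to split $E=L_1\oplus L_2$ fibrewise) with the graph construction (to convert the set-theoretic $\mathcal{R}_{\ge k}=V_1^\vee\cup V_2^\vee$ into the Hom-vanishing).
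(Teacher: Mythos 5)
Your proposal is correct, and for the hard implication --- the converse of (ii) --- it takes a genuinely different route from the paper. Part (i) and the forward direction of (ii) essentially match the paper's arguments (the paper handles the special/non-special dichotomy for the subsheaf generated by the isotropic subspace via a Clifford-theorem case split, whereas you rule out speciality outright using $h^0(E)/2>g$; the projection argument giving $\mathcal{R}_{\ge k}(C,E)=V_1^\vee\cup V_2^\vee$ is the same). For the converse of (ii), the paper works with the possibly unsaturated subsheaves $N,M$ generated by $V_1^\vee,V_2^\vee$ and proceeds through three claims: injectivity of $N\oplus M\to E$, a Clifford estimate using $\deg(E)\ge 4g+1$ and $H^1(C,E)=0$ to show at least one of $N,M$ is non-special, and torsion-freeness of $E/N$; it then shows $M\to E/N$ is an isomorphism by comparing global sections and using global generation. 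You instead saturate both subsheaves, pin down $H^0(C,L_i)=V_i^\vee$ by the dimension count, and obtain the splitting directly from the fibrewise observation that global generation forces $L_1(p)\oplus L_2(p)=E(p)$ at every point; this bypasses Clifford entirely and in fact never uses the degree bound or $H^1(C,E)=0$ in this implication, so your argument is shorter and slightly more general there, while the paper's structure is what its numerical hypotheses (via $\deg N+\deg M\le \deg E$ in Claim 1) are designed for. The concluding graph-subbundle argument for $h^0(N\otimes M^\vee)=h^0(M\otimes N^\vee)=0$ coincides with the paper's final step. One small ordering point: you invoke ``distinct saturated line subbundles intersect trivially'' before knowing $L_1\ne L_2$; this is harmless, since $L_1=L_2$ would force all of $H^0(C,E)$ into a single line subbundle, contradicting global generation of the rank-$2$ bundle $E$ (or making $\mathcal{R}_{\ge k}(C,E)$ equal to all of $H^0(C,E)^\vee$), but the sentence should be reordered accordingly.
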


\proof
\emph{(i)}
Assume $E$ is not stable, and let $L\subseteq E$ be a maximal destabilizing line subbundle. Since $\mathrm{deg}(L)\ge \frac{\mathrm{deg}(E)}{2}\ge 2g$, the bundle $L$ is non--special and globally generated. Therefore $h^0(C, L)\ge \frac{h^0(C, E)}{2}$ and $H^0(C, L)\subseteq H^0(C, E)$ is isotropic.

\vskip 4pt

Conversely, let us assume $U\subseteq H^0(C, E)$ is isotropic of dimension at least $\frac{h^0(E)}{2}$. Then $U$ generates a line bundle $N\subseteq E$. If $N$ is non-special, then it destabilizes $E$.
If $N$ is special, by Clifford's Theorem
\[
\mathrm{deg}(N)\ge 2h^0(C, N)-2\ge h^0(C, E)-2.
\]
Furthermore, by Riemann-Roch Theorem we obtain
$h^0(E)-2 = \mathrm{deg}(E)-2g\geq \frac{\mathrm{deg}(E)}{2}$. In conclusion, $N$ destabilizes $E$. The unstable case goes through similarly, noting that since $\mbox{deg}(E)\geq 4g+1$,  we have $h^0(C, E)-2>\frac{\mathrm{deg}(E)}{2}$.
\medskip

\emph{(ii)} Assume $E=N\oplus M$ splits and $h^0(N\otimes M^\vee )=h^0(M\otimes N^\vee)=0$. Put $k=\mathrm{min}\{\mathrm{deg}(N),\mathrm{deg}(M)\}$. Assume, for simplicity, $k=\mathrm{deg}(N)$. Since $E$ is globally generated and $h^1(C, E)=0$, it follows that both $N$ and $M$ are globally generated and non-special. Put $V_1:=H^0(C, N)$ and $V_2:=H^0(C, M)$. These two subspaces are isotropic and $H^0(C, E)=V_1\oplus V_2$. We prove that $\mathcal{R}_{\ge k}(C, E)= V_1^\vee\cup V_2^\vee$. Let $a\in \mathcal{R}_{\ge k}(C, E)\setminus \{0\}$, then there exists $b$ such that $0\ne a\wedge b\in K^\perp$ and hence $a$ and $b$ span a line bundle $L$ of degree at least $k$ inside $E=N\oplus M$.
It the induced map $L\to N$ is non-zero, then $L=N$ and, since $h^0(M\otimes N^\vee)=0$, it follows that the map $L\to M$ is zero, which implies $a\in V_1^\vee$. If the map $L\to N$ is zero, then $a\in V_2^\vee$.

\vskip 3pt

Conversely, assume we are given isotropic subspaces $V_1,V_2\subseteq H^0(C,E)$ such that $H^0(C,E)^\vee=V_1^\vee\oplus V_2^\vee$ and $\mathcal{R}_{\ge k}(C, E)= V_1^\vee\cup V_2^\vee$.
Let $N$ and $M$ be the line bundles of degree at least $k$ contained in $E$ generated by $V_1^\vee$ and $V_2^\vee$ respectively. By isotropy, it follows that $N$ and $M$ are globally generated with $V_1^\vee\subseteq H^0(C, N)$ and $V_2^\vee\subseteq H^0(C, M)$. Since $H^0(C, N)$ and $H^0(C, M)$ are also isotropic, and hence contained in the resonance, the are in fact contained in $\mathcal{R}_{\ge k}(C, E)$. The assumption  $\mathcal{R}_{\ge k}(C, E)= V_1^\vee\cup V_2^\vee$ implies $V_1^\vee = H^0(C, N)$ and $V_2^\vee = H^0(C, M)$.

\vskip 3pt

\emph{Claim 1.} The natural map $N\oplus M\to E$ is injective. Indeed, otherwise its image is a line bundle $L$ of degree at least $k$. Passing to global sections we obtain
\[
V_1^\vee\oplus V_2^\vee\to H^0(C, L)\subseteq H^0(C, E)
\]
and the composition is the identity. In conclusion the inclusion $L\subseteq E$ yields an equality $H^0(C, L)=H^0(C, E)$. Since $H^0(C, L)$ is isotropic, and hence contained in the resonance, we find $\mathcal{R}_{\ge k}(C, E)=H^0(C,E)^\vee$, in contradiction with the hypothesis.

\medskip

\emph{Claim 2.} $h^1(C, N)\cdot h^1(C, M)=0$. Assume on the contrary that both $N$ and $M$ are special. By Clifford's Theorem we obtain
\[
h^0(C, N)\le \frac{\mathrm{deg}(N)}{2}+1,\ \ h^0(C, M)\le \frac{\mathrm{deg}(M)}{2}+1
\]
and hence
\[
h^0(C, E)=h^0(C, N)+h^0(C, M)\le \frac{\mathrm{deg}(N)+\mathrm{deg}(M)}{2}+2.
\]
On the other hand, from \emph{Claim 1} we have $\mathrm{deg}(N)+\mathrm{deg}(M)\le \mathrm{deg}(E)$ which imples that
\[
h^0(C, E)\le \frac{\mathrm{deg}(E)}{2}+2.
\]
Since $h^1(C,E)=0$, by Riemann--Roch  $\mathrm{deg}(E)\le 4g$, contradicting the hypothesis.

\vskip 4pt

\emph{Claim 3.} Suppose $h^1(C, N)=0$. Then $L:=E/N$ is torsion-free. Indeed, if it has torsion, then we consider the line bundle $N':=\ker\{E\to L/\mathrm{tors}(L)\}\subseteq E$ which is also of degree at least $k$, and an inclusion
$N\subsetneq N'$.  Since $N$ is non-special, $N'$ is also non-special and by Riemann-Roch $H^0(C, N)\subsetneq H^0(C, N')$. Note however that $H^0(C, N')$ is isotropic, therefore contained in the resonance, contradicting that $H^0(C, N)$ is a component of $\mathcal{R}_{\ge k}(C,E)$.

\vskip 3pt

Having proved these claims, we conclude. Denoting by $\alpha\colon M\to L$ the composition,  note that $\alpha \ne 0$ for otherwise $M\subseteq N$, contradicting the hypothesis. Since $N$ is non--special, the equality $H^0(C,E)=H^0(C,N)\oplus H^0(C,M)$ and the long cohomology sequence of the exact sequence
\[
0\longrightarrow N\longrightarrow E\longrightarrow L\longrightarrow 0
\]
shows that $H^0(\alpha)\colon H^0(C, M)\to H^0(C, L)$ is an isomorphism. Since $E$ is globally generated, it follows that $L$ is globally generated as well. We have the following situation: $M$ and $L$ are globally generated line bundles, and $\alpha\colon M\to L$ is a morphism inducing an isomorphism on global sections. It implies that $\alpha$ is surjective, and hence it is an isomorphism, providing us with a splitting $E\cong N\oplus M$. To prove that $h^0(M\otimes N^\vee) = 0$ observe that any non-zero section in $H^0(M\otimes N^\vee)$ gives an embedding $N\subseteq N\oplus M$ with torsion--free quotient which yields to elements in $\mathcal{R}_{\ge k}(C, E)$ that are neither in $V_1^\vee$  nor in $V_2^\vee$.
\endproof


\begin{remark}
	For (ii), the bound $\mbox{deg}(E)\geq 4g+1$ in the assumption of Theorem \ref{thm:split} can be improved to $4g$ if $C$ is non-hyperelliptic. Indeed, in Claim 2, the inequalities resulting from Clifford's Theorem are strict.
\end{remark}

\begin{remark}
The resonance of split bundles is in general much more complicated than the union of two subspaces. The easiest example is obtained on the projective line for the bundle $\mathcal{O}(1)\oplus \mathcal{O}(1)$ whose resonance is a smooth quadric in the three--dimensional projective space. In this case, the stratification consists of only one stratum, the maximal one.

A more elaborate example is the following. Suppose $C$ is an elliptic curve, $p\ne q$ are two points on $C$, and $E=\mathcal{O}_C(3p)\oplus \mathcal{O}_C(3q)$. Then the projectivized resonance $\mathbf{R}(C,E)$ has three connected components, all of dimension two, namely, the planes $\mathbf{P}H^0(C,\mathcal{O}_C(3p))$, $\mathbf{P}H^0(C,\mathcal{O}_C(3q))$, and a ruled surface over the curve $\mathrm{Pic}^2(C)=C$. This description follows directly from \cite[Proposition 6.1]{AFRS}, by observing that for any $L\in \mathrm{Pic}^2(C)$ we have $h^0(C,E(-L))=1$. Note that, in this case, we have $$\mathbf{R}_{\ge 3}(C,E)=\mathbf{P}H^0(C,\mathcal{O}_C(3p))\cup\mathbf{P}H^0(C,\mathcal{O}_C(3q)),$$ which shows that the bound in the theorem above is not sharp.
\end{remark}

\end{document}